\documentclass{amsart}
\setlength{\parskip}{5pt}
\setlength{\parindent}{0pt}
\usepackage{amsmath,url,amssymb,amsthm,amsfonts,todonotes,bbm, mathrsfs}
\usepackage[all]{xy}
\usepackage{eucal}
\usepackage{fullpage}
\usepackage[T1]{fontenc}
\usepackage{textcomp}
\usepackage{lmodern}
\usepackage[english]{babel}
\usepackage{extarrows}
\usetikzlibrary{patterns,decorations.pathreplacing}

\usepackage[pagebackref]{hyperref}
  
\hypersetup{%
  bookmarksnumbered=true,
  colorlinks=true,%
  linkcolor=blue,%
  citecolor=blue,%
  filecolor=blue,%
  menucolor=blue,%
  urlcolor=blue,%
  pdfnewwindow=true,%
  pdfstartview=FitBH}

\newcommand{\R}{\mathbb{R}}
\newcommand{\C}{\mathbb{C}}
\newcommand{\Z}{\mathbb{Z}}

\newcommand{\N}{\mathbb{N}}


\newcommand{\E}{\mathcal{E}}
\newcommand{\I}{\mathcal{I}}
\newcommand{\J}{\mathcal{J}}

\DeclareMathOperator{\colim}{\mathrm{colim}}

\DeclareMathOperator{\hocolim}{\mathrm{hocolim}}
\DeclareMathOperator{\holim}{\mathrm{holim}}
\DeclareMathOperator{\hofibre}{\mathrm{hofibre}}


\DeclareMathOperator{\homog}{--homog--}

\DeclareMathOperator{\poly}{--poly--}

\newcommand{\s}{\mathsf{Sp}}
\DeclareMathOperator{\T}{\mathsf{Top_\ast}}


\DeclareMathOperator{\Hom}{\mathsf{Hom}}


\newcommand{\cofrep}{Q}
\newcommand{\Ev}{\mathrm{Ev}}
\newcommand{\fibrep}{R}
\DeclareMathOperator{\id}{\mathrm{id}}
\DeclareMathOperator{\ind}{\mathrm{ind}}
\DeclareMathOperator{\res}{\mathrm{res}}


\newcommand{\U}{\mathsf{U}}
\newcommand{\BU}{\mathsf{BU}}

\theoremstyle{definition}
\newtheorem{thm}{Theorem}[section]

\newtheorem{prop}[thm]{Proposition}

\newtheorem{lem}[thm]{Lemma}
\newtheorem{cor}[thm]{Corollary}
\newtheorem{example}[thm]{Example}
\newtheorem{definition}[thm]{Definition}
\newtheorem{rem}[thm]{Remark}

\newtheorem{ex}[thm]{Example}
\newtheorem{alphtheorem}{Theorem}[section]

\newtheorem{alphprop}[alphtheorem]{Proposition}

  \newcommand{\adjunction}[4]{
\xymatrix{
#1:#2 \ar@<0.7ex>[r] &
\ar@<0.7ex>[l] #3:#4
}}

\begin{document}

\title{Unitary calculus: model categories and convergence}
\author{Niall Taggart}
\address{Mathematical Institute, Utrecht University}
\email{n.c.taggart@uu.nl}
\date{}
\begin{abstract}
We construct the unitary analogue of orthogonal calculus developed by Weiss, utilising model categories to give a clear description of the intricacies in the equivariance and homotopy theory involved. The subtle differences between real and complex geometry lead to subtle differences between orthogonal and unitary calculus. To address these differences we construct unitary spectra - a variation of orthogonal spectra - as a model for the stable homotopy category. We show through a zig-zag of Quillen equivalences that unitary spectra with an action of the $n$-th unitary group models the homogeneous part of unitary calculus. We address the issue of convergence of the Taylor tower by introducing weakly polynomial functors, which are similar to weakly analytic functors of Goodwillie but more computationally tractable. 
\end{abstract}
\maketitle 

\setcounter{tocdepth}{1}
{\hypersetup{linkcolor=black} \tableofcontents}

\section{Introduction}

Functor calculus was originally developed by Goodwillie \cite{Go90, Go91, Go03} to systematically study the algebraic $K$-theory of spaces. The theory developed from the study of homotopy preserving endofunctors on $\T$ to a more varied settings, such as, functors $\T \longrightarrow \s$, and $\s \longrightarrow \s$. Biedermann, Chorny and R\"{o}ndigs \cite{BCR07} and Biedermann and R\"{o}ndigs \cite{BR14} provided a model category framework for Goodwillie calculus, with  Kuhn \cite{Kuh07} developing the theory for abstract model categories. The theory has been extended to the study of functors on suitable $(\infty,1)$-categories by Lurie \cite{Lur18}.  

As these developments of Goodwillie calculus were taking place, the general theory of a ``calculus of functors'' was developing for functors with more structure. One example of this is the orthogonal calculus of Weiss \cite{We95}. This calculus gives a framework for the systematic study of functors indexed on real inner product spaces. Key examples include $\mathsf{BO}(-): V \longmapsto \mathsf{BO}(V)$ and $\mathsf{BDiff^b}(M \times -) : V \longmapsto \mathsf{BDiff^b}(M \times V)$ where $\mathsf{BDiff^b}(M \times V)$ is the classifying space of the group of bounded diffeomorphisms from $M \times V$ to itself, for $M$ a fixed manifold.  Other examples of functor calculus include the additive functor calculus developed by Johnson and McCarthy \cite{BM99, BM03a, BM03b} and the manifold calculus of Goodwillie and Weiss \cite{We99, GW99, dBW13}.

Unitary calculus is the extension of orthogonal calculus to the study of functors from the category of complex inner product spaces to topological spaces. The orthogonal calculus relies heavily on the real geometry of the vector spaces, and hence the unitary calculus relies on the complex geometry involved. The subtle differences in the geometry lead to subtle differences in the calculi.

The theory of orthogonal calculus has been developed extensively in the literature, for example in \cite{We95, MW09, BO13} and \cite{Ba17}. Contrastingly, the unitary version does not have solid foundations in the literature despite being known to the experts. For example, in \cite[Example 10.3]{We95} Weiss calculates the first derivative of $\mathsf{BU}(-): V \longmapsto \mathsf{BU}(V)$. These calculations have been taken further in \cite{Ar02}, where Arone studies the calculus of the functor $\mathsf{BU}(-)$ in great detail, giving a closed formula for the derivatives \cite[Theorems 2 and 3]{Ar02}, and calculates homology approximations of the layers of the associated Taylor tower \cite[\S4]{Ar02}. Other examples where unitary calculus has been employed in the literature include \cite{Ar98, Ar01, ADL08} and \cite{BE16}. 

Since orthogonal calculus is built from real vector spaces, and unitary calculus is built from complex vector spaces, there should exist interesting comparisons between the calculi similar to the comparisons between $K$-theory, $\mathsf{KU}$, and real $K$-theory, $\mathsf{KO}$ induced by the complexification - realification adjunction between real and complex inner product spaces. For these comparisons to be possible, a firm grasp of the unitary calculus is essential.

Moreover, the complex vector spaces considered in unitary calculus come with a complex conjugation, which induces a $C_2\T$-enrichment on the category of input functors. This leads to the notion of ``unitary calculus with reality'', a version of unitary calculus which takes into account this $C_2$-action. Furthermore, it should then be possible to compare all three calculi allowing for the movement of computations between the calculi. The comparisons should be similar to those between $K$-theory, real $K$-theory and the $K$-theory with reality of Atiyah \cite{At66}. It is precisely these questions which the author will address in future work, utilising the solid foundations laid out in this paper.

\section{Main Results and summary of Unitary Calculus}
We give the main results of the paper and a summary of the calculus for easy reference.

\subsection*{The machinery} 
The orthogonal and unitary calculi are similar in that given a functor $F$ of the appropriate type, the calculus assigns a sequence of functors $\{T_nF\}_{n \in \N}$ to $F$. These functors are ``polynomial'' in the sense that they assemble into a Taylor tower, the layers (differences between successive polynomial approximations) of which are determined by spectra with an action of an appropriate group; an orthogonal group for orthogonal calculus, and a unitary group for unitary calculus. For the unitary calculus case, we have the following result, which is Theorem \ref{characterisation of homogeneous functors}. Note that a functor $F$ is $n$-homogeneous if it is both $n$-polynomial and $T_{n-1}F$ vanishes, see Definition \ref{definition: homogeneous functor}. 

\begin{alphtheorem}\label{thm 1}
Let $F \in \E_0$ be $n$-homogeneous for some $n >0$. Then $F$ is levelwise weakly equivalent to the functor defined as 
\begin{equation*}\label{char of homog functors}
U \longmapsto \Omega^\infty [(S^{nU} \wedge \Psi_F^n)_{h\U(n)}]
\end{equation*}
where $\Psi_F^n$ is a spectrum with an action of $\U(n)$ formed by the $n$-th derivative of $F$, and $S^{nU}$ is the one-point compactification of $\C^n \otimes U$ with the induced $\U(n)$-action via the regular representation action on $\C^n$. 
\end{alphtheorem}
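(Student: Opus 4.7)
The plan is to establish, through a zig-zag of Quillen equivalences, that the model category of $n$-homogeneous functors in $\E_0$ is Quillen equivalent to the category of unitary spectra with a $U(n)$-action. The theorem will then follow by identifying the inverse of this equivalence with the functor displayed in the statement, applied to a suitable cofibrant-fibrant replacement.

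First, I would construct the spectrum $\Psi_F^n$ as the $n$-th derivative of $F$. Following the blueprint of orthogonal calculus, this involves the complex Stiefel-like cofiber sequences $\gamma_n(V) \to S^{\C^n \otimes V} \to (\text{lower-order terms})$ built out of complex inner-product spaces, yielding a natural $U(n)$-action coming from the regular representation on $\C^n$. The association $F \mapsto \Psi_F^n$ should be the right adjoint (or left adjoint, depending on setup) in a Quillen pair between $n$-homogeneous functors and $U(n)$-equivariant unitary spectra.

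Next, I would assemble the chain of Quillen equivalences in three stages. Stage one replaces $n$-homogeneous functors in $\E_0$ with functors out of an intermediate category $\Jn$ whose objects are complex inner-product spaces and whose morphism spaces encode the $n$-fold derivative, using the model structure for $n$-homogeneous functors built from the $n$-polynomial and $(n-1)$-reduced conditions. Stage two uses the free/forgetful adjunction between $\Jn$-functors and $\Jo$-functors equipped with a $U(n)$-action, analogous to the orthogonal-calculus step passing from the $n$-th intermediate category to $O(n)$-objects over the $0$-th intermediate category. Stage three converts $\Jo$-functors with $U(n)$-action to unitary spectra with $U(n)$-action, which is where the construction of unitary spectra from the introduction pays off: the sphere spectrum in this category is modelled by $V \mapsto S^V$, and the $U(n)$-action on $\C^n \otimes V$ yields the representation sphere $S^{nU}$ appearing in the statement.

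Tracing a cofibrant-fibrant $\Psi_F^n$ back through this zig-zag produces exactly the formula $U \mapsto \Omega^\infty[(S^{nU} \wedge \Psi_F^n)_{hU(n)}]$: the smash with $S^{nU}$ records stage three (the shift corresponding to the $n$-th derivative), the homotopy-orbit $(-)_{hU(n)}$ records stage two (descending from $U(n)$-objects back to ordinary $\Jo$-functors), and $\Omega^\infty$ records stage one (passing from spectra back to spaces). The main obstacle, as in Barnes–Oman's orthogonal treatment, will be stage one: constructing a well-behaved model structure on $\E_0$ whose fibrant objects are precisely the $n$-homogeneous functors and verifying that the differentiation-then-shift functor is a Quillen equivalence between this model category and the $U(n)$-equivariant unitary spectra. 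Verifying $n$-polynomiality of the right-hand formula is formal from the representation-sphere structure of $S^{nU}$, but showing that $T_{n-1}$ of the formula vanishes requires a transfer argument using the $U(n)$-action, exploiting that the complement $\C^n \otimes V$ has no $U(n)$-fixed vectors in the relevant sense; this is the place where the passage from real to complex geometry produces genuinely different numerology from Weiss's theorem, and it is where I would expect the proof to be most delicate.
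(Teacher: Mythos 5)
Your high-level architecture is the same as the paper's: the zig-zag of Quillen equivalences $n\homog\E_0 \rightleftarrows \U(n)\E_n \rightleftarrows \s^\mathscr{U}[\U(n)] \rightleftarrows \s^\mathscr{O}[\U(n)]$ is exactly how $\Psi_F^n$ is produced and how the classification is extracted. However, two of the steps you dismiss as automatic are where the real content lies, and one of them is a genuine gap. Tracing a cofibrant-fibrant spectrum $\Theta$ back through the zig-zag does \emph{not} ``produce exactly the formula'': the derived left adjoint $\res_0^n(-)/\U(n)$ applied to $(\alpha_n \circ r)^*\Theta$ yields the functor $U \longmapsto [\Omega^\infty(S^{nU}\wedge\Theta)]_{h\U(n)}$, with the homotopy orbits taken \emph{outside} the infinite loop space, whereas the statement has $\Omega^\infty$ outside the orbits. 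These two functors are not levelwise equivalent in general; they are only $T_n$-equivalent, and proving that interchange (the unitary analogue of Weiss's Example 6.4, Example \ref{Weiss 6.4}) is an essential lemma your outline omits entirely.

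Your division of labour in the last paragraph is also backwards. The $n$-polynomiality of $U \longmapsto \Omega^\infty[(S^{nU}\wedge\Psi_F^n)_{h\U(n)}]$ is not formal: it is the substantive inductive derivative computation of Example \ref{example: infinite loop polynomial}, descending through functors $F[i]$ with structure group $\U(n-i)$ via the cofibre sequence $S(\C^{n-i})_+ \to S^0 \to S^{2(n-i)}$ and the identification $S^{2(n-i)-1}\cong \U(n-i)/\U(n-i-1)$. Conversely, no transfer argument and no appeal to the absence of $\U(n)$-fixed vectors in $\C^n\otimes V$ is needed anywhere; the complex setting changes only the parity of the spheres ($S^{2n}$ in place of $S^n$), not the shape of Weiss's argument. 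Finally, the passage from an equivalence in the homotopy category of $n\homog\E_0$ (where weak equivalences are $\ind_0^n T_n$-equivalences) to the claimed \emph{levelwise} equivalence is carried out by a double application of Whitehead's theorem, once for the right Bousfield localisation (between cofibrant objects) and once for the left one (between $n$-polynomial, i.e. fibrant, objects); your ``suitable cofibrant-fibrant replacement'' points in this direction, but the argument only runs once you have the two inputs above — the orbit/homotopy-orbit interchange and the $n$-polynomiality of the formula — in hand.
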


For the most part, these functor calculi sit in quite strong analogy with Taylor's Theorem from differential calculus. We construct an $n$-th polynomial approximation functor $T_n$ in Section \ref{section: polynomial functors}, which sits an analogy with the Taylor polynomial $p_n(x)$ from differential calculus. Moreover the $n$-th polynomial approximation of an $n$-polynomial functor recovers the original functor, as is the case for differential calculus, see Proposition \ref{equiv to poly approx}. 

Given a functor $F$, the first derivative $F^{(1)}$ (see Definition \ref{def: derivative}), has structure maps
\[
S^2 \wedge F^{(1)}(U) \longrightarrow F^{(1)}(U \oplus V),
\]
and the second derivative has structure maps 
\[
S^4 \wedge F^{(2)}(U) \longrightarrow F^{(2)}(U \oplus V).
\]

In general, the $n$-th derivative has structure maps,
\[
S^{2n}\wedge F^{(n)}(U) \longrightarrow F^{(n)}(U \oplus V).
\]

In Proposition \ref{derivatives stable} we show that adjoint structure maps give a method of calculating the $n$-th derivative from the $(n-1)$-st derivative. 

\begin{alphprop}
Let $n$ be a non-negative integer. There is a homotopy fibre sequence 
\[
F^{(n)}(U) \longrightarrow F^{(n-1)}(U) \longrightarrow \Omega^{2(n-1)}F^{(n-1)}(U \oplus \C),
\]
for all $U \in \J_n$. 
\end{alphprop}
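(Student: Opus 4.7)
The plan is to identify $F^{(n)}(U)$ with the homotopy fibre of the adjoint structure map via a cofibre decomposition of the morphism spaces of $\J_n$.

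First, I would recall that $\J_n(U,V)$ is the Thom space of a complex vector bundle $\gamma_n(U,V)$ over the Stiefel manifold $\mor(U,V)$ of $\C$-linear isometric embeddings, whose fibre over $f \colon U \hookrightarrow V$ is $\C^n \otimes (V - f(U))$. Since this bundle splits fibrewise as $\gamma_{n-1}(U,V) \oplus (\C \otimes (V - f(U)))$, the morphism spaces of $\J_n$ and $\J_{n-1}$ fit into a canonical cofibre sequence of pointed spaces.

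Next, I would specialise to $V = U \oplus \C$. Here the Stiefel manifold deformation retracts to the canonical inclusion and the complement is one copy of $\C$, so the cofibre sequence above collapses essentially to spheres of real dimensions $2(n-1)$ and $2n$. Applying $F^{(n-1)}$ together with the enriched functorial structure, and passing from this cofibre sequence of morphism spaces to a fibre sequence of values, the connecting map becomes, by a direct adjunction argument, precisely the map $F^{(n-1)}(U) \to \Omega^{2(n-1)}F^{(n-1)}(U \oplus \C)$ adjoint to the $(n-1)$-st structure map at $V = \C$.

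Finally, I would identify the homotopy fibre of this map with $F^{(n)}(U)$, using the inductive description of the $n$-th derivative as being obtained from the $(n-1)$-st by formally adjoining one more complex suspension coordinate worth of structure. The main obstacle is precisely this identification: one must verify that the abstract definition of $F^{(n)}$ used in the paper agrees up to weak equivalence with the concrete iterated homotopy fibre of adjoint structure maps, which requires checking compatibility between the enriched Yoneda lemma, the Thom space cofibre sequences above, and the fibrant/cofibrant replacements invoked in the definition of the derivatives.
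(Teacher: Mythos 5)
Your overall instinct---obtain the fibre sequence from a cofibre sequence of morphism spaces in the jet categories and then use adjunctions---is the same as the paper's, but as written the argument has genuine gaps. First, the cofibre sequence you start from is not the one that is available for free, and the one that is needed does not follow from the fibrewise splitting $\gamma_n(U,V)\cong\gamma_{n-1}(U,V)\oplus(\C\otimes(V-f(U)))$. That splitting only produces the zero-section inclusion $\J_{n-1}(U,V)\to\J_n(U,V)$, whose cofibre is not what you need; the sequence actually required is the unitary analogue of Weiss's Proposition 1.2,
\[
\J_{n-1}(U\oplus\C,-)\wedge S^{2(n-1)}\longrightarrow\J_{n-1}(U,-)\longrightarrow\J_n(U,-),
\]
a cofibre sequence in $\E_{n-1}$ whose first term involves the shifted object $U\oplus\C$, and whose proof is a genuine geometric argument (of the same flavour as Theorem \ref{hocolim sphere homeo}), not a formal consequence of the bundle splitting. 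Second, your mechanism for converting morphism-space information into a statement about values does not work as described: specialising to $V=U\oplus\C$ and ``applying $F^{(n-1)}$'' to a cofibre sequence of morphism spaces is not a valid operation (an enriched functor is applied to objects of $\J_{n-1}$, not to these spaces, and the action maps alone do not turn a cofibre sequence of hom-spaces into a fibre sequence of values); moreover the claim that the space of isometries $U\to U\oplus\C$ deformation retracts onto the canonical inclusion is false (for $U=\C$ it is $S^3$). The correct move is to map the displayed cofibre sequence of (cofibrant) representable objects into $F^{(n-1)}$ via the enriched hom $\E_{n-1}(-,F^{(n-1)})$, which produces a homotopy fibre sequence natural in $U$.

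Third, and most seriously, your last step is circular: you propose to identify the resulting homotopy fibre with $F^{(n)}(U)$ by appealing to an ``inductive description'' of the derivative as adjoining one suspension coordinate, but that description is precisely what the proposition asserts, and you flag this identification as the main unresolved obstacle. In fact no separate identification is needed once the enriched-hom route is taken: since $\ind_{n-1}^{n}$ is by definition a right Kan extension, $\E_{n-1}(\res_{n-1}^{n}\J_n(U,-),F^{(n-1)})\cong\ind_{n-1}^{n}F^{(n-1)}(U)$, and $\ind_{n-1}^{n}\ind_0^{n-1}F=\ind_0^{n}F=F^{(n)}$, while the Yoneda lemma identifies $\E_{n-1}(\J_{n-1}(U,-),F^{(n-1)})\cong F^{(n-1)}(U)$ and the $(\Sigma,\Omega)$-adjunction identifies the third term with $\Omega^{2(n-1)}F^{(n-1)}(U\oplus\C)$. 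So the two missing ingredients are the unitary Weiss cofibre sequence itself and these Yoneda/Kan-extension identifications; with them supplied, your outline collapses to the paper's proof.
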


We further show in Proposition \ref{prop: hofib of n-poly map} that the derivative of a functor is a measure of how far a functor is from being polynomial. 

\begin{alphprop}
Let $n$ be a non-negative integer and let $F \in \E_0$. There is a homotopy fibre sequence 
\[
F^{(n+1)}(U) \longrightarrow F(U) \longrightarrow  \tau_n F(U),
\]
for all $U \in \J_0$. 
\end{alphprop}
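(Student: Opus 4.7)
The plan is to view both $F^{(n+1)}(U)$ and the homotopy fibre of $F(U) \to \tau_n F(U)$ as two descriptions of the total homotopy fibre of a single $(n+1)$-cubical diagram, and then to invoke the classical total-fibre lemma. The underlying cube is $W \mapsto F(U \oplus W)$ with $W$ running through the coordinate subspaces of $\C^{n+1}$, i.e.\ indexed by $\{0,1\}^{n+1}$.

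First, I would unwind the construction of $\tau_n$ from Section~\ref{section: polynomial functors}. The value $\tau_n F(U)$ is built as a homotopy limit of $F(U \oplus W)$ as $W$ ranges over the non-zero complex subspaces of $\C^{n+1}$, and the canonical transformation $F \to \tau_n F$ is induced by the vertex $W = 0$. A cofinality argument replaces this indexing poset by its restriction to the coordinate subspaces, so that $\tau_n F(U)$ becomes $\holim_{W \neq 0} F(U \oplus W)$ with $W$ running through the punctured cube $\{0,1\}^{n+1} \setminus \{0\}$.

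Second, I would iterate the previous proposition, which writes $F^{(k)}(U)$ as the homotopy fibre of $F^{(k-1)}(U) \to \Omega^{2(k-1)} F^{(k-1)}(U \oplus \C)$, starting from $F^{(0)} = F$. Unravelling this $(n+1)$-fold nesting, and using that homotopy fibres commute with one another, with $\Omega$, and with $\holim$, identifies $F^{(n+1)}(U)$ with the total homotopy fibre of the cube $W \mapsto F(U \oplus W)$; the loop factors $\Omega^{2k}$ absorb neatly into the iterated fibre because each step of the induction peels off one more coordinate direction of $\C^{n+1}$. Finally, the standard total-fibre lemma identifies this total fibre with the homotopy fibre of $F(U) \to \holim_{W \neq 0} F(U \oplus W)$, and combining with the first step yields the desired fibre sequence.

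The hard part will be the inductive identification in the second step. While formally routine, it requires careful bookkeeping of the loop factors $\Omega^{2k}$ arising from the adjoint structure maps of the iterated derivatives in the unitary setting, and verifying that they assemble into a plain total fibre of the unshifted cube. The cofinality argument of the first step and the total-fibre lemma of the third are standard, so once this bookkeeping is in place the proof concludes formally.
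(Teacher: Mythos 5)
There is a genuine gap, and it sits exactly at the point your plan treats as routine: the ``cofinality argument'' in your first step is false. The homotopy limit in Definition \ref{definition: polynomial functor} is taken over the poset of \emph{all} non-zero complex subspaces of $\C^{n+1}$, which is a category internal to $\T$: its object space is a disjoint union of Grassmannians (already for $n=1$ there is a $\C P^{1}\cong S^2$ worth of lines), and the homotopy limit must respect this topology. The discrete punctured cube of coordinate subspaces is not homotopy final in this topologised poset, so $\tau_nF(U)$ is not $\holim_{W\neq 0}F(U\oplus W)$ over $\{0,1\}^{n+1}\setminus\{0\}$; this is precisely the difference between unitary/orthogonal calculus and Goodwillie calculus emphasised in the remark accompanying Figure \ref{fig 1}, and if the reduction were valid the whole theory would collapse to a cross-effect calculus, which it does not. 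Your second step has the same problem in dual form: iterating Proposition \ref{derivatives stable} gives, e.g., $F^{(2)}(U)\simeq\hofibre\bigl[F^{(1)}(U)\to\Omega^{2}F^{(1)}(U\oplus\C)\bigr]$, where the map is adjoint to the structure map involving $S^{\C}\cong S^2$; the loop factors $\Omega^{2k}$ do not ``absorb neatly'', and the iterated fibre is genuinely different from the total homotopy fibre of the unshifted cube $W\mapsto F(U\oplus W)$. Those loops are exactly the shadow of the Grassmannian topology your cofinality step discarded, so the two inaccuracies do not cancel; making them cancel would amount to a new proof of the proposition, not a formal bookkeeping exercise.

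The argument the paper relies on (Weiss's Proposition 5.3, transported to the unitary setting, or alternatively Lemma 5.5 of Barnes--Oman) is geometric rather than cubical: using Theorem \ref{hocolim sphere homeo} together with the enriched Yoneda lemma one identifies $\tau_nF(U)\simeq\E_0\bigl(S\gamma_{n+1}(U,-)_+,F\bigr)$, and then applies $\E_0(-,F)$ to the cofibre sequence $S\gamma_{n+1}(U,-)_+\to\J_0(U,-)\to\J_{n+1}(U,-)$, which yields the fibre sequence $F^{(n+1)}(U)\to F(U)\to\tau_nF(U)$ by the definition of $\ind_0^{n+1}$ and Yoneda. If you want to salvage your outline, you must replace the punctured cube by the topologised poset throughout and prove a statement identifying the iterated (looped) fibres of Proposition \ref{derivatives stable} with the total fibre over that topologised indexing category --- which is in substance the sphere-bundle identification of Theorem \ref{hocolim sphere homeo} again.
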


There is a map from the $n$-th polynomial approximation to the $(n-1)$-st polynomial approximation, the homotopy fibre of which is $n$-homogeneous, hence there is a homotopy fibre sequence 
\[
\Omega^\infty[(S^{nU} \wedge \Psi_F^n)_{h\U(n)}] \longrightarrow T_nF(U) \longrightarrow T_{n-1}F(U).
\]

These homotopy fibre sequences assemble into a Taylor tower approximating the functor $F$,
\[
\xymatrix@C+2cm{
   & \vdots \ar[d]^{r_{n+1}} &\\
   & T_n F(U) \ar[d]^{r_n} &  \ar[l] \Omega^\infty[(S^{nU} \wedge \Psi_F^n)_{h\U(n)}] \\
   & T_{n-1}F(U) \ar[d]^{r_{n-1}}  &  \ar[l] \Omega^\infty[(S^{(n-1)U} \wedge \Psi_F^{n-1})_{h\U(n-1)}]\\
   & \vdots  \ar[d]^{r_2} &  \\ 
   &T_1 F(U) \ar[d]^{r_{1}}  &  \ar[l] \Omega^\infty[(S^U \wedge \Psi_F^1)_{h\U(1)}]\\
F(U) \ar@/^2pc/[uuuur]  \ar@/^1pc/[uuur]   \ar@/^/[ur] \ar[r] & F(\C^\infty).  &\\
}
\]

\subsection*{Model categories for unitary calculus}
In \cite{BO13} Barnes and Oman rewrote the homotopy theory originally developed by Weiss in \cite{We95} into the language of model categories. This is advantageous as it removed the need for ``up to homotopy'' statements and provided a clearer understanding of the equivariance in the picture. In this paper we construct the theory of unitary calculus in terms of model categories. These model categories will enable clear comparisons between the versions of the calculi. 

We start with a construction of an $n$-polynomial model structure, which contains the homotopy theory of $n$-polynomial functors. In particular weak equivalences in this model structure are detected by the $n$-th polynomial approximation functor, and moreover the $n$-th polynomial approximation functor is a model for a fibrant replacement functor in this model structure. 

In particular we can construct a model structure, the $n$-homogeneous model structure, which contains the homotopy theory of $n$-homogeneous functors. Here, the cofibrant-fibrant objects are precisely the $n$-homogeneous functors and weak equivalences are detected by the $n$-th layer of their respective towers. 

Using the characterisation of the $n$-homogeneous functors of Theorem \ref{thm 1} we further characterise the $n$-homogeneous model structure, similarly to \cite[Theorem 10.1]{BO13}. We give an alternative description of the weak equivalences as those detected by the homotopy fibre of the map $T_nF \longrightarrow T_{n-1}F$ rather than equivalences detected by the derivative, and further characterise the acyclic fibrations and cofibrations. These characterisations allow for the theory of localisations to be used computationally in the theory.

The $n$-homogeneous model structure is zig-zag Quillen equivalent to spectra with an action of $\U(n)$. The zig-zag equivalence moves through an intermediate category, $\U(n)\E_n$. This category behaves like spectra, but with structure maps $S^{2n} \wedge X_k  \longrightarrow X_{k+1}$, and is the natural home for the $n$-th derivative of a unitary functor. It comes with a stable model structure, called the $n$-stable model structure, which is an alteration of the stable model structure on spectra to take into account the unusual structure maps. The zig-zag Quillen equivalence is proved in two steps. The first, which appears as Theorem \ref{QE: intermediate and spectra}, demonstrates a Quillen equivalence between the intermediate category and the category of unitary spectra with an action of $\U(n)$. 

\begin{alphtheorem}
Let $n$ be a non-negative integer. There is an adjoint pair 
\[
\adjunction{(\alpha_n)_!}{\U(n)\E_n}{\s^\mathscr{U}[\U(n)]}{(\alpha_n)^*},
\]
which is a Quillen equivalence.
\end{alphtheorem}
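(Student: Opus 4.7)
The plan is to treat this as a change-of-indexing-category Quillen equivalence between two diagram-spectra categories. I would identify $\alpha_n$ as the functor embedding the indexing category for $\U(n)\E_n$ (which records only complex vector spaces of the form $\C^{nk}$, with the appropriate $\U(n)$-equivariant data) into the indexing category for $\s^\mathscr{U}[\U(n)]$. Under this identification $(\alpha_n)_!$ is a left Kan extension and $(\alpha_n)^*$ is restriction, and the $n$-stable model structure on $\U(n)\E_n$ has been set up precisely so that this correspondence is homotopically meaningful.

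First I would verify the Quillen adjunction by showing that $(\alpha_n)^*$ preserves fibrations and acyclic fibrations. Restriction along $\alpha_n$ preserves levelwise weak equivalences and levelwise fibrations, and since both target model structures are Bousfield localisations of their respective level structures at stabilisation maps — with those on $\U(n)\E_n$ being (a cofinal subset of) those on $\s^\mathscr{U}[\U(n)]$ — restriction also preserves the localised fibrations and acyclic fibrations between fibrant objects. The derived unit at a cofibrant $X \in \U(n)\E_n$ is then a weak equivalence since $\alpha_n$ is fully faithful, so restriction of the left Kan extension along a fully faithful functor recovers the original up to level weak equivalence.

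The harder direction, and the main obstacle, is showing that the derived counit $(\alpha_n)_!(\alpha_n)^* Y \to Y$ is a stable equivalence for fibrant $Y \in \s^\mathscr{U}[\U(n)]$. This reduces to a cofinality argument: a fibrant unitary spectrum satisfies the $\Omega$-spectrum condition at every complex inner product space, and since every finite-dimensional complex vector space embeds into some $\C^{nk}$, the family $\{\C^{nk}\}_{k \geq 0}$ is cofinal and determines $Y$ stably. Computing the left Kan extension at a general $V$ as a filtered colimit over the comma category $\alpha_n \downarrow V$ and comparing with the $\Omega$-spectrum structure of $Y$ then identifies the counit with a stable equivalence. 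The difficulty is in the bookkeeping: one must confirm that the $n$-stable structure on $\U(n)\E_n$ interacts correctly with this cofinal inclusion, so that the filtered colimits implicit in the left Kan extension preserve stable homotopy type in the presence of the unusual structure maps $S^{2n} \wedge X_k \to X_{k+1}$ rather than the standard suspension maps.
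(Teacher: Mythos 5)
There is a genuine gap, and it sits exactly where you lean on full faithfulness. The functor $\alpha_n \colon \J_n \longrightarrow \varepsilon^*\J_1$ sends $V$ to $nV=\C^n\otimes V$ and $(f,x)$ to $(\C^n\otimes f,x)$; on morphism spaces this is the inclusion of the Thom space of $\gamma_n(U,V)$ into the Thom space of $\gamma_1(nU,nV)$, which is far from surjective (already for $U=V=\C$ and $n=2$ the underlying map of isometry spaces is $\U(1)\hookrightarrow\U(2)$). So $\alpha_n$ is faithful but not full, and the assertion that the derived unit is an equivalence at every cofibrant $X$ because ``restriction of the left Kan extension along a fully faithful functor recovers the original up to level weak equivalence'' fails on both counts: the hypothesis is false, and the conclusion is too strong. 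For a representable $\J_n(W,-)$ the unit is the map $\J_n(W,V)\longrightarrow\J_1(nW,nV)$, which is a homeomorphism only when $W=0$; in general it is at best an $n\pi_*$-isomorphism after deriving, and establishing that is precisely the content of the theorem, not something one gets for free. This is why the actual argument checks the unit only on the homotopically compact generator $\U(n)_+\wedge n\mathbb{S}=\U(n)_+\wedge\J_n(0,-)$, where a direct coend computation identifies $(\alpha_n)_!(\U(n)_+\wedge n\mathbb{S})$ with $\U(n)_+\wedge\mathbb{S}$ and the unit with an isomorphism; the extension to all cofibrant objects then comes from stability, compact generation, and the left adjoint preserving coproducts, together with the right adjoint reflecting weak equivalences.

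Your second step also does not go through as stated. The enriched left Kan extension $(\alpha_n)_!Y(V)=\int^{U\in\J_n}Y(U)\wedge\J_1(nU,V)$ is a coend over a topologically enriched category, not a filtered colimit over the comma category $\alpha_n\downarrow V$, so the proposed ``compute the counit by cofinality of $\{\C^{nk}\}$'' argument is not available in that form, and you correctly flag but do not resolve the interaction with the structure maps $S^{2n}\wedge X_k\to X_{k+1}$. The cofinality intuition is sound, but it belongs elsewhere: it is used to show that $(\alpha_n)^*$ is homotopically conservative, via
\[
\pi_k\Theta=\colim_q \pi_{2q+k}\Theta(\C^q)\cong\colim_q \pi_{2nq+k}\Theta(\C^{nq})=n\pi_k(\alpha_n)^*\Theta,
\]
so that a map of unitary $\U(n)$-spectra whose restriction is an $n\pi_*$-isomorphism is itself a $\pi_*$-isomorphism. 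With conservativity of the right adjoint in hand, one never needs to analyse the counit directly: conservativity plus the derived unit being an equivalence on the compact generator yields the Quillen equivalence. To repair your proof, replace the full-faithfulness step by the generator computation and redirect the cofinality argument from the counit to the conservativity statement.
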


The second proves that the $n$-homogeneous model structure, $n\homog\E_0$, is Quillen equivalent to the intermediate category, $\U(n)\E_n$. This is Theorem \ref{n-homog and intermediate QE} in the text.

\begin{alphtheorem}
Let $n$ be a non-negative integer. There is an adjoint pair
\[
\adjunction{\res_0^n/\U(n)}{\U(n)\E_n}{n\homog\E_0}{\ind_0^n\varepsilon^*},
\]
which is a Quillen equivalence.
\end{alphtheorem}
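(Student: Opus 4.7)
The plan is to verify that the given adjoint pair is a Quillen adjunction with respect to the $n$-stable structure on $\U(n)\E_n$ and the $n$-homogeneous structure on $\E_0$, and then promote it to a Quillen equivalence by showing that the derived counit is a weak equivalence on fibrant objects and that the right adjoint reflects weak equivalences between fibrant objects. Theorem \ref{thm 1}, together with the preceding Quillen equivalence between $\U(n)\E_n$ and $\s^\mathscr{U}[\U(n)]$, will do the bulk of the work.

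First I would verify the Quillen adjunction by checking that $\ind_0^n\varepsilon^*$ preserves fibrations and acyclic fibrations. Since the $n$-homogeneous model structure arises as a left Bousfield localisation, fibrations between fibrant objects and acyclic fibrations throughout agree with those of the underlying polynomial structure, so the check reduces to computing the left adjoint on the generating (acyclic) cofibrations of $\U(n)\E_n$ in the $n$-stable structure, where representability makes the calculation tractable.

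To upgrade to a Quillen equivalence, I would identify fibrant objects in $n\homog\E_0$, up to weak equivalence, with $n$-homogeneous functors, which by Theorem \ref{thm 1} are of the form $U \longmapsto \Omega^\infty[(S^{nU} \wedge \Psi)_{h\U(n)}]$ for a $\U(n)$-spectrum $\Psi$. The right adjoint $\ind_0^n\varepsilon^*$ applied to such an $F$ should package the level-$n$ data of $F$ into an object of $\U(n)\E_n$ corresponding, under the preceding Quillen equivalence, to $\Psi = \Psi_F^n$. Applying $\res_0^n/\U(n)$ to a cofibrant replacement thereof should then reproduce the formula of Theorem \ref{thm 1}, so that the derived counit at $F$ is equivalent to the natural equivalence of Theorem \ref{thm 1}. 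Reflection of weak equivalences between fibrant objects then follows because such equivalences in $n\homog\E_0$ are detected at the level of the $n$-th layer of the Taylor tower, which in turn is detected at the level of $\U(n)$-spectra via the preceding Quillen equivalence.

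The main technical hurdle will be showing that the strict orbit construction $(-)/\U(n)$ implicit in the left adjoint models the derived homotopy orbits $(-)_{h\U(n)}$ appearing in Theorem \ref{thm 1}. This requires a freeness property for the $\U(n)$-action on cofibrant objects of $\U(n)\E_n$ in the $n$-stable structure, in analogy with \cite[Section 9]{BO13}. Once the generating cofibrations are shown to exhibit a sufficiently free $\U(n)$-action, the passage from strict to homotopy orbits becomes automatic on cofibrant objects, and the identification of the derived counit with the characterisation equivalence of Theorem \ref{thm 1} reduces to a routine diagram chase.
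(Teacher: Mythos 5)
There is a genuine gap, and it is primarily one of circularity. Your argument leans on Theorem \ref{thm 1} (the classification of $n$-homogeneous functors as $U \longmapsto \Omega^\infty[(S^{nU} \wedge \Psi_F^n)_{h\U(n)}]$) to identify the derived counit with the classification equivalence. But in this paper that classification is Theorem \ref{characterisation of homogeneous functors}, and it is proved \emph{using} the Quillen equivalence you are asked to establish: the spectrum $\Psi_F^n$ is only defined via the derived functors of this very adjunction, and the proof of the classification invokes the fact that $\ind_0^n\varepsilon^*$ is a right Quillen functor for these model structures together with Whitehead's theorem for the localised structures. So, as written, your proof presupposes its own conclusion; to make it non-circular you would need an independent proof of the classification in the style of Weiss's \cite[Theorem 7.3]{We95}, which is a substantial technical argument you do not sketch. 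The paper deliberately avoids this route: it verifies the \emph{derived unit} on cofibrant objects of $\U(n)\E_n$, reducing via the Quillen equivalence with $\s^\mathscr{O}[\U(n)]$ (Corollary \ref{cor: intermediate and ortho spectra}) to objects of the form $(\alpha_n \circ r)^*\Theta$ for $\Theta$ an $\Omega$-spectrum with $\U(n)$-action, and then computes the unit using only Example \ref{Weiss 6.4} and Example \ref{example: infinite loop polynomial}, neither of which depends on the classification; Theorem \ref{thm 1} is then deduced afterwards as a corollary. (Your remaining ingredients --- the Quillen adjunction via generating cofibrations and the passage from strict orbits $(-)/\U(n)$ to homotopy orbits on cofibrant objects --- do match the paper's Proposition \ref{prop: QA for differentiation} and the cofibrancy step inside the proof of the equivalence.)

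A secondary, fixable, logical slip: the criterion you propose --- derived counit an equivalence on fibrant objects together with the \emph{right} adjoint reflecting weak equivalences between fibrant objects --- is not sufficient for a Quillen equivalence. The correct pairings (as in \cite[Corollary 1.3.16]{Ho99}) are: right adjoint reflects weak equivalences between fibrant objects \emph{and} the derived unit is an equivalence on cofibrant objects; or left adjoint reflects weak equivalences between cofibrant objects \emph{and} the derived counit is an equivalence on fibrant objects. With your mixed pairing one only obtains that the derived right adjoint is fully faithful (which already implies it reflects isomorphisms), with no control on essential surjectivity; a trivial counterexample is the adjunction between any model category and the terminal one. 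Also note that the fibrant objects of $n\homog\E_0$ are the $n$-polynomial functors, not the $n$-homogeneous ones; it is the cofibrant-fibrant objects that are $n$-homogeneous, so the counit must be computed after a cofibrant replacement in the homogeneous structure, which is exactly where the classification you are assuming would be doing the work.
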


Moreover, we exhibit a Quillen equivalence between unitary spectra with an action of $\U(n)$ and orthogonal spectra with an action of $\U(n)$, via a Quillen equivalence between orthogonal and unitary spectra. We prove these results as Theorem \ref{thm: QE of models of spectra} and Corollary \ref{cor: spectra QE}. 

\begin{alphtheorem}
Let $n$ be a non-negative integer. There are adjoint pairs 
\[
\adjunction{r_!}{\s^\mathscr{U}}{\s^\mathscr{O}}{r^*}\ \ \text{and} \ \ \adjunction{r_!}{\s^\mathscr{U}[U(n)]}{\s^\mathscr{O}[U(n)]}{r^*},
\]
which are Quillen equivalences.
\end{alphtheorem}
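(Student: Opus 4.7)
The plan is to treat realification as an enriched functor $r \colon \mathscr{U} \to \mathscr{O}$ between the indexing categories of the two models of spectra, sending a complex inner product space $\C^n$ to its underlying real inner product space $\R^{2n}$. The adjunction $r_! \dashv r^*$ is precisely the enriched left Kan extension along $r$ paired with restriction along $r$. I would argue in three stages: establish the Quillen adjunction at the level of stable model structures, verify Quillen equivalence in the non-equivariant case, and then deduce the $U(n)$-equivariant version by transfer.

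For the Quillen adjunction, $r^*$ is immediately a right Quillen functor for the level (projective) model structures, since fibrations and acyclic fibrations there are detected levelwise and $r^*$ merely restricts to a subcollection of levels. The stable structures are left Bousfield localisations of the level ones, so it is enough to check that $r^*$ preserves stably fibrant objects. Stably fibrant orthogonal spectra are $\Omega$-spectra, and this is inherited by $r^*Y$ because the adjoint structure maps of $r^*Y$ form a subset of those of $Y$.

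For Quillen equivalence, the essential observation is homotopical cofinality: the image $\{\R^{2n} : n \geq 0\}$ of $r$ is cofinal in $\mathscr{O}$ in the sense that, between stably fibrant orthogonal spectra, a levelwise equivalence on this subcollection extends to a stable equivalence via the $\Omega$-spectrum structure maps. This yields that $r^*$ reflects stable equivalences between fibrant objects. It remains to check the derived unit $X \to r^* L r_! X$ is a stable equivalence for a set of generators of $\s^\mathscr{U}$, namely the free unitary spectra $F_V S^k$. On these, $r_! F_V S^k = F_{r(V)} S^k$ by Yoneda, and both $F_V S^k$ and its restriction $r^* F_{r(V)} S^k$ represent the same shifted sphere spectrum in the stable homotopy category; the stabilisation process is meant to absorb the on-the-nose discrepancy coming from the difference between complex and real Stiefel manifolds in the Thom-space enrichment of the indexing categories.

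The equivariant statement should then follow formally. The stable model structure on $\s^\mathscr{U}[U(n)]$ is transferred from $\s^\mathscr{U}$ along the $U(n)$-free/forgetful adjunction, and similarly for $\s^\mathscr{O}[U(n)]$; the pair $(r_!, r^*)$ commutes with the $U(n)$-action because $r$ acts on indexing categories and does not interact with the group action on spectra, so the Quillen equivalence descends. The principal obstacle will be the verification of the derived unit on generators: unstably the comparison $F_V S^k \to r^* F_{r(V)} S^k$ is a closed inclusion of Thom spaces that is not a weak equivalence, and promoting it to a stable equivalence requires identifying both sides with shifted sphere spectra in a controlled way, in the spirit of the comparisons between models of spectra developed by Mandell--May--Schwede--Shipley.
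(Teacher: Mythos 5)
Your route is essentially the paper's: the same adjunction (restriction along realification together with its enriched left Kan extension), a Quillen adjunction obtained from $r^*$ preserving (acyclic) fibrations and fibrant objects, a cofinality argument giving that $r^*$ reflects weak equivalences between fibrant objects, a reduction of the Quillen-equivalence condition to the derived unit on generators, and the $\U(n)$-statement deduced formally because weak equivalences and fibrations in $\s^\mathscr{U}[\U(n)]$ and $\s^\mathscr{O}[\U(n)]$ are detected on underlying spectra and both $r_!$ and $r^*$ commute with the group action and the forgetful functors. The one genuine divergence is your choice of generators, and it is precisely what creates the obstacle you leave open. The paper uses that both categories are stable, that $r_!$ preserves coproducts, and that $\s^\mathscr{U}$ is homotopically compactly generated by the single object $\mathbb{S}$, so the derived unit need only be checked on $\mathbb{S}$; there the point-set unit is an isomorphism, since $r_!\mathbb{S} = r_!F_0S^0 = F_0S^0 = \mathbb{S}^{\mathscr{O}}$ and $r^*\mathbb{S}^{\mathscr{O}}$ is the unitary sphere on the nose, so the derived unit is an equivalence as soon as $r^*$ preserves $\pi_*$-isomorphisms of orthogonal spectra --- which is the same even-dimensional cofinality observation you already invoke. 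Your reduction to all free spectra $F_VS^k$ is legitimate, but note two things. First, the reduction itself needs the justification the paper records (exactness of the derived functors, preservation of coproducts, compactness of the generators); you assert it without these ingredients. Second, it forces exactly the comparison you flag and do not carry out: the unit $F_VS^k \longrightarrow r^*F_{r(V)}S^k$ is levelwise the map of Thom spaces $\J_1(V,\C^q)\wedge S^k \longrightarrow \J_1^{\mathscr{O}}(r(V),\R^{2q})\wedge S^k$ induced by realification of isometries, which is not a levelwise equivalence; one must check it is a $\pi_*$-isomorphism by a connectivity estimate. This can be done --- the map is compatible with Thom classes, the complex and real Stiefel manifolds are respectively $2(q-d)$- and $(2q-2d-1)$-connected for $d=\dim_\C V$, so both Thom spaces agree with $S^{2(q-d)+k}$ and with each other through dimensions growing roughly like $4(q-d)$, which outpaces the stabilisation degree $2q+j$ --- but the single-generator reduction renders this unnecessary. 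So: correct strategy, same as the paper's, with an avoidable hard step left incomplete rather than a wrong turn.
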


This gives a complete picture of the model categories for unitary calculus and their relations. 
\[
\xymatrix@C+1cm{
n\homog\E_0 \ar@<-1ex>[r]_(.55){\ind_0^n\varepsilon^*} &  \U(n)\E_n \ar@<-1ex>[l]_(.4){\res_0^n/\U(n)} \ar@<1ex>[r]^{(\alpha_n)_!} & \s^\mathscr{U}[\U(n)] \ar@<1ex>[l]^{(\alpha_n)^*} \ar@<1ex>[r]^{r_!} & \ar@<1ex>[l]^{r^*} \s^\mathscr{O}[\U(n)]. 
}
\]

The added complexity in dealing with complex inner product spaces results in an extra adjunction than in the orthogonal case \cite[Proposition 8.3, Theorem 10.1 ]{BO13}.

\subsection*{Convergence}
One gap in the literature with regards to both orthogonal and unitary calculus is the notion of agreement and analyticity. These notions are central to convergence and play an important role in Goodwillie calculus \cite{Go91, Go03}. We define the notion of weak polynomiality, Definition \ref{def: weak poly}, in this context and show that when a functor is weakly polynomial its Taylor tower converges, that is, weakly polynomial functors are weakly analytic and they allow for more straightforward computations. 

We generalise a result of Barnes and Eldred \cite[Theorem 4.1]{BE16} to the setting of weak polynomial functors. This is provided as Theorem \ref{quasi thm} in the text. 

\begin{alphtheorem} 
Let $E, F\in \E_0$ be such that there is a homotopy fibre sequence 
\[
E(U) \longrightarrow F(U) \longrightarrow  F(U \oplus V)
\]
for $U,V \in \J$. Then 
\begin{enumerate}
\item If $F$ is weakly polynomial, then $E$ is weakly polynomial; and
\item If $E$ is weakly polynomial and $F(U)$ is $1$-connected whenever $\dim(U) \geq \rho$, then $F$ is weakly polynomial.
\end{enumerate}
\end{alphtheorem}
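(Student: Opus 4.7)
The plan is to apply the $n$-th polynomial approximation functor $T_n$ to the given homotopy fibre sequence. Since $T_n$ is a fibrant replacement in the $n$-polynomial model structure, built from the homotopy-limit construction $\tau_n$, it preserves homotopy fibre sequences. We therefore obtain a commutative diagram
\[
\xymatrix{
E(U) \ar[r] \ar[d] & F(U) \ar[r] \ar[d] & F(U \oplus V) \ar[d] \\
T_nE(U) \ar[r] & T_nF(U) \ar[r] & T_nF(U \oplus V)
}
\]
whose rows are homotopy fibre sequences; taking homotopy fibres of the vertical maps yields a third homotopy fibre sequence that will serve as the central computational tool.

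Weak polynomiality of a functor $G$ (Definition \ref{def: weak poly}) is, in spirit, the statement that the connectivity of the hofibre of $G(U) \to T_nG(U)$ grows without bound as $\dim(U)$ increases, for every $n$. I will therefore reduce both parts of the theorem to connectivity estimates read off from the long exact sequence in homotopy groups associated to the column hofibre sequence above.

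For part (1), if $F$ is weakly polynomial then the hofibres of $F(U) \to T_nF(U)$ and of $F(U \oplus V) \to T_nF(U \oplus V)$ are both highly connected. The long exact sequence then forces the hofibre of $E(U) \to T_nE(U)$ to be highly connected, with an explicit bound given in terms of the two $F$-bounds (essentially the minimum of the two, shifted by one). This delivers weak polynomiality of $E$ directly, with no need for any extra connectivity hypothesis.

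Part (2) is the harder direction. Given weak polynomiality of $E$ and $1$-connectivity of $F(U)$ for $\dim(U) \geq \rho$, one propagates the connectivity estimate for $E$ through the same long exact sequence to $F$. The $1$-connectedness hypothesis is exactly what is needed so that the relevant range of the long exact sequence is a sequence of abelian groups, avoiding $\pi_0/\pi_1$ and basepoint obstructions when one wishes to invert the connecting map. The main obstacle, however, is that the column hofibre sequence relates the hofibre of $F(U) \to T_nF(U)$ to the hofibre of $F(U \oplus V) \to T_nF(U \oplus V)$ rather than to a known object, so the desired bound on the hofibre at $U$ must be extracted either by an induction on $\dim(V)$ or by iterating the fibre sequence and passing to the limit. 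The technical crux is to verify that this induction closes and that the resulting connectivity bound for $F$ still grows without bound in $\dim(U)$, as demanded by the definition of weak polynomiality; the $1$-connectivity hypothesis on $F$ and the unbounded growth of the connectivity for $E$ should together provide exactly enough room to make this work.
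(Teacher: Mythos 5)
Part (1) of your proposal is correct and is exactly the paper's argument: apply $T_n$, which preserves fibre sequences, and compare the two columns. The problem is part (2), where you have correctly located the crux but not actually supplied the idea that resolves it. Your long exact sequence (equivalently, the fibre sequence of vertical hofibres) only tells you that the hofibre of $F(U) \longrightarrow T_nF(U)$ is highly connected \emph{relative to} the hofibre of $F(U \oplus V) \longrightarrow T_nF(U \oplus V)$; iterating this pushes the unknown quantity out to larger and larger vector spaces, and by itself the induction never closes --- there is no base case. Saying that the unbounded growth of the connectivity for $E$ and the $1$-connectedness of $F$ ``should provide exactly enough room'' is precisely the step that needs a proof, and it is not true without an additional input.

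The missing ingredient is an anchor at infinity: the unitary analogue of Weiss's Lemma 5.14, namely that $F(\C^\infty) \longrightarrow T_nF(\C^\infty)$ is a weak equivalence (polynomial approximation does not change the value at $\C^\infty$, since finite homotopy limits commute with the filtered colimit defining $F(\C^\infty)$). The paper's argument runs as follows: from the connectivity of $E(\C^q) \longrightarrow T_nE(\C^q)$ and the $1$-connectedness of $F$ one shows (via the fibre/cartesian comparison of \cite[Propositions 3.3.18]{MV15}, which is where path-connectedness of the bases is genuinely used) that each square comparing $F(\C^q) \longrightarrow F(\C^{q+1})$ with $T_nF(\C^q) \longrightarrow T_nF(\C^{q+1})$ is $(2(n+1)q - b)$-cartesian; one then concatenates these squares \cite[Proposition 3.3.20]{MV15} and passes to the filtered homotopy colimit to obtain a $(2(n+1)q - b)$-cartesian square comparing $F(\C^q) \longrightarrow F(\C^\infty)$ with $T_nF(\C^q) \longrightarrow T_nF(\C^\infty)$. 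Only now, because the right-hand vertical map is an equivalence, does the cartesian estimate convert into the statement that $F(\C^q) \longrightarrow T_nF(\C^q)$ is $(2(n+1)q - b)$-connected, i.e.\ an order $n$ agreement. Without the identification of the value at $\C^\infty$ your iteration has nothing to converge to, so as written the proposal for part (2) is incomplete.
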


With this Theorem we prove that the functor $\BU(-) : V \longmapsto \BU(V)$ converges to $\BU(V)$ for $V$ with $\dim(V) \geq 1$. 

In Theorem \ref{representable functors are analytic} we show that any representable functor is weakly polynomial, and hence its associated Taylor tower converges. 

\begin{alphtheorem}
Representable functors are weakly polynomial, that is, for all $V \in \J$, the functor $\J(V,-)$ is weakly polynomial. 
\end{alphtheorem}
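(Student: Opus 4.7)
The plan is to induct on $n = \dim_\C V$, using Theorem \ref{quasi thm} as the main engine.

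For the base case $n = 0$, the functor $\J(0, -)$ is (weakly equivalent to) a constant functor -- the space of complex isometries out of the zero space is a point, and the associated Thom construction yields $S^0$ -- so it is manifestly $0$-polynomial and hence weakly polynomial.

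For the inductive step, write $V \cong V' \oplus \C$ with $\dim_\C V' = n-1$, so that the inductive hypothesis supplies weak polynomiality of $\J(V', -)$. The key geometric input is the principal fibration of complex Stiefel manifolds coming from restriction of isometries: an isometry $V' \oplus \C \to U$ is determined by its restriction to $V'$ together with a unit vector in the orthogonal complement. Passing this through the Thom-space construction that defines the morphism objects of $\J$, and combining with a stabilization argument, should produce a homotopy fibre sequence of the shape
$$E(U) \longrightarrow F(U) \longrightarrow F(U \oplus W)$$
required by Theorem \ref{quasi thm}, with one of $E,F$ being $\J(V, -)$ and the other being built from $\J(V', -)$. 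The appropriate part of Theorem \ref{quasi thm} -- together with the high connectivity of complex Stiefel manifolds (the standard fact that $V_k(\C^m)$ is $2(m-k)$-connected, hence arbitrarily highly connected as the ambient dimension grows, so the connectivity hypothesis needed for part (2) is automatic) -- transports weak polynomiality from $\J(V', -)$ to $\J(V, -)$, closing the induction.

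The main obstacle will be producing the fibre sequence in precisely the stabilization form demanded by Theorem \ref{quasi thm}. The naive restriction fibration $S^{2(\dim_\C U - \dim_\C V')-1} \to \J(V, U) \to \J(V', U)$ has the wrong shape, and a reassembly that brings in a stabilization map of the form $F(U) \to F(U \oplus W)$ is required; one must further check that the Thom-space twist built into the morphism objects of $\J$ -- the complement bundle on the space of isometries -- is compatible with every map in sight and is natural in $U$. Once this fibre sequence is established, the remainder of the argument is formal.
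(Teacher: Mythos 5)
Your overall strategy (induct on $\dim_\C V$ using a Stiefel-manifold fibration) is in the right spirit, but the step you yourself flag as the ``main obstacle'' is a genuine gap, and it is not one that can be patched in the form you propose. Theorem \ref{quasi thm} requires a homotopy fibre sequence of the very specific shape $E(U) \to F(U) \to F(U \oplus W)$, i.e.\ the third term must be the \emph{same} functor $F$ evaluated at a stabilised variable. The natural fibrations relating $\J(V,-)$ and $\J(V',-)$ simply do not have this shape: the restriction fibration has base $\J(V',U)$ and fibre a sphere, the evaluation fibration has base the unit sphere $S(V\oplus U)$ and fibre $\J(V',\,\cdot\,)$, and in neither case is the base a stabilisation of the total-space functor. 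Nor can you take $F=\J(V,-)$ and define $E$ as the homotopy fibre of the stabilisation $\J(V,U)\to\J(V,U\oplus\C)$: already for $V=\C$ this homotopy fibre is $S^{2\dim(U)-1}\times\Omega S^{2\dim(U)+1}$ up to homotopy, which is neither a representable nor anything your induction controls, so ``$E$ built from $\J(V',-)$'' does not materialise. Since the whole inductive step is routed through this nonexistent sequence, the proof does not close. (The connectivity remark about Stiefel manifolds, intended for the $1$-connectedness hypothesis in part (2), is fine but moot.)

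The paper avoids Theorem \ref{quasi thm} altogether. After reducing to the functor $U \mapsto \J(\C^n,\C^n\oplus U)$, it uses the evaluation fibration coming from $\U(n-1)\to\U(n)\to S^{2n-1}$, namely
\[
\J(\C^{n-1},\C^{n-1}\oplus U) \longrightarrow \J(\C^{n},\C^{n}\oplus U) \longrightarrow S^{2(\dim(U)+n)-1},
\]
in which the \emph{fibre} is the smaller representable (weakly polynomial by induction) and the \emph{base} is a shifted sphere functor, already known to be weakly polynomial from Examples \ref{example: sphere functor analytic} and \ref{example: shifted spheres analytic}. Since $T_n$ preserves fibre sequences, agreement of order $n$ for fibre and base propagates to the total space by a connectivity comparison, closing the induction without ever needing a sequence of the stabilisation shape. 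If you want to salvage your write-up, replace the attempted reduction to Theorem \ref{quasi thm} by this direct argument: prove (or cite) that weak polynomiality of fibre and base of a levelwise fibre sequence implies weak polynomiality of the total space, and use the evaluation fibration rather than the restriction fibration, so that the sphere sits in the base where its weak polynomiality is already available.
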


\subsection*{Organisation} 
In Section \ref{section: polynomial functors} we define polynomial functors, construct the polynomial approximation functor and show how this data assembles into a Taylor tower.

Section \ref{section: derivatives} concerns the derivatives of unitary functors. We also construct the $n$-homogeneous model structure. An initial step toward the zig-zag Quillen equivalence is complete in Section \ref{section: intermediate category} where we define the intermediate category for unitary calculus and prove a Quillen equivalence between the intermediate category and the $n$-homogeneous model structure. 

We give a description of homotopy theory of the derivatives through a Quillen equivalence between the intermediate category and spectra with an action of $\U(n)$ in Section \ref{section: intermediate category as spectra}. In Section \ref{section: diff as a QF} we prove that the differentiation functor is a right Quillen functor as part of a Quillen equivalence. 

In Section \ref{section: classification of homogeneous functors} we give a classification of $n$-homogeneous functors in terms of spectra with an action of $\U(n)$. We also give further characterisations of the $n$-homogeneous weak equivalences, acyclic fibrations and cofibrations.  This section ends with a short discussion on the complete Taylor tower. 

The final section, Section \ref{section: convergence}, is an initial step toward understanding convergence of the Taylor tower in unitary calculus.

\subsection*{Conventions and Notation} Throughout this paper the category of based compactly generated weak Hausdorff topological spaces will be denoted $\T$. We endow $\T$ with the Quillen model structure, this is cofibrantly generated with the set of generating cofibrations denoted $I$ and generating acyclic cofibrations $J$. 

In this paper we make strong use of the theory of model categories and Bousfield localisations. These provide us with the tools to make precise the ``up to homotopy'' results of Weiss. We refer the unfamiliar reader to \cite{DS95, Ho99} and \cite{Hi03} for the details of the theory. 

\subsection*{Acknowledgments} This work forms part of the authors Ph.D. project under the supervision of David Barnes. The author wishes to thank him for thoughtful and enthusiastic discussions. The author also thanks Greg Arone for helpful comments. We further thank the referee for numerous helpful comments which greatly improved the quality and exposition of this work.

\section{Polynomial Functors and the Taylor Tower}\label{section: polynomial functors}

As with differential calculus, the building blocks of functor calculus are the polynomial functors. In particular the polynomial approximations are the pieces that fit together into the Taylor tower, and in some cases converge to the original input functor. In this section we give the constructions of polynomial functors and polynomial approximations. We highlight the strong analogy between unitary calculus and differential calculus and how this machinery produces a Taylor tower.

\subsection{The input functors} We start with a discussion on the input functors. Let $\J$ be the category of finite-dimensional complex inner product subspaces of $\C^\infty$, with morphism the complex linear isometries. This category is $\T$-enriched, the space of morphisms $\J(U,V)$ is the Stiefel manifold of $\dim (U)$-frames in $V$. Let $\J_0$ be the category with the same objects as $\J$ and morphism space given by $\J_0(U,V) = \J(U,V)_+$. We denote the category of $\T$-enriched functors from $\J_0$ to $\T$  by $\E_0$. This is the input category for unitary calculus. Examples of such functors are abound. These include;
\begin{enumerate}
\item $\mathsf{BU}(-): V \longmapsto \mathsf{BU}(V)$, where $\mathsf{BU}(V)$ is the classifying space of the unitary group associated to $V$;
\item $\mathsf{BTOP}(-): V \longmapsto \mathsf{BTOP}(V)$, where $\mathsf{BTOP}(V)$ is the classifying space of the group of homeomorphisms on $V$; and
\item $\mathsf{B}\mathcal{G}(\mathbb{S}(-)): V \longmapsto \mathsf{B}\mathcal{G}(S^V)$, where $\mathsf{B}\mathcal{G}(S^V)$ is the classifying space of the group-like monoid of homotopy equivalences between $S^V$ and itself.
\end{enumerate}

These examples all have orthogonal counterparts, see \cite{We95}. This indicates a strong relationship between the orthogonal and unitary calculi, induced by the strong relationship between functors of this type and their orthogonal versions. A deeper understanding of the unitary calculus framework will lead to a better understanding of this relationship. 

The category $\E_0$ is a diagram category in the sense of Mandell, May, Schwede and Shipley \cite{MMSS01} and hence comes equipped with a projective model structure, where the weak equivalences and fibrations are defined to be the levelwise weak homotopy equivalences and levelwise Serre fibrations respectively. This a cellular proper topological model category with generating (acyclic) cofibrations of the form $\J_0(V,-) \wedge i$ with $i$ a generating (acyclic) cofibration of $\T$, see \cite[Theorem 6.5]{MMSS01}.

\subsection{Polynomial functors} In differential calculus, polynomial functions are used to approximate a given function. These polynomial functions may be used to gain insight about the original function and can be combined in such a way to give a complete approximation of the function. Polynomial functors are a categorification of this idea. We begin with the definition. 

\begin{definition}\label{definition: polynomial functor}
Let $n$ be a non-negative integer. A functor $F$ in $\E_0$ is \textit{polynomial of degree less than or equal to $n$} or equivalently \textit{$n$-polynomial} if the canonical map 
\[
\rho: F(V) \longrightarrow \underset{0 \neq U \subseteq \C^{n+1}}{\holim} F(V \oplus U) =: \tau_n F(V),
\]
is a weak homotopy equivalences of spaces. 
\end{definition}

The poset of non-zero complex subspaces of $\C^{n+1}$ is a category object in spaces, specifically, a category internal to $\sf{Top}$. The space of objects is topologised as a disjoint union of complex Grassmannian manifolds, and the space of morphisms topologised as the space of complex flag manifolds of length two (or equivalently the space of $2$-simplices in the nerve of the poset). As such, the homotopy limit of Definition \ref{definition: polynomial functor} must take into account the topology of the poset. The details may be found in \cite{We95, We98} for the orthogonal case. The unitary case follows similar. Hollender and Vogt \cite{HV92} and Lind \cite{Lin09} give constructions for homotopy limits and colimits indexed on categories internal to spaces. 

\begin{rem}
In comparison to Goodwillie calculus, $F : \mathcal{C} \longrightarrow \mathcal{C}$ (where $\mathcal{C}$ is some appropriate $(\infty,1)$-category, for instance $\mathcal{C}=\T$) is $1$-excisive (linear) it if takes (homotopy) pushouts to (homotopy) pullbacks, \cite[Definition 1.2]{AC19}. In this situation the homotopy limit is indexed on the poset of subsets of the set of two elements, \cite[Definition 1.1]{AC19}.  Our situation is significantly more complicated due to the topology involved in the indexing poset $\{0 \neq U \subseteq \C^2\}$ where there is an $\C P^{1}$ ($\cong S^2$) worth of one-dimensional complex subspaces. More generally the higher dimensional cubes of Goodwillie translate to a disjoint union of Grassmannian manifolds in our setting. Note this is also true for orthogonal calculus, where there is an $\R P^1 \cong S^1$ worth of one-dimensional subspaces. Figure \ref{fig 1} illustrates the poset for linear functors in orthogonal calculus (on the left) and for Goodwillie calculus (on the right).

\begin{figure}[h]
\begin{center}
\begin{tikzpicture}[scale=2.5]
\draw[thick] (1,0) ellipse (0.25cm and 0.5cm);
\node[label={$\mathbb{R}^2$}] (a) at (2,0) {$\bullet$};
\draw (1,0.5) -- (2,0);
\draw (1.1,-0.46) -- (2,0);
\draw(1.22,-0.2) -- (2,0);
\draw [dashed](0.75,0.15) -- (2,0);
\draw [
    thick,
    decoration={
        brace,
        mirror,
        raise=0.5cm
    },
    decorate
] (0.8,-0.4) -- (1.2, -0.4);
\node [scale=0.8] at (1, -0.8) {$1$-dim subspaces};
\node (b) at (4,0.25) {$\{1\}$};
\node (c) at (4,-0.5) {$\{1,2\}$};
\node (d) at (3.25,-0.5) {$\{2\}$};
\draw[->] (b) -- (c);
\draw[->] (d) -- (c); 
\end{tikzpicture}
\end{center}
\caption{Poset schematics for linear functors in orthogonal and Goodwillie calculus.}
\label{fig 1}
\end{figure}
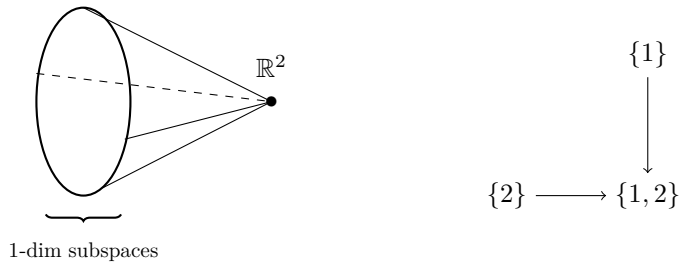
\end{rem}

\begin{ex}
A functor $F$ is polynomial of degree zero if and only if $F$ is homotopically constant. Indeed, if $F$ is $0$-polynomial then, $F(V) \simeq F(U \oplus \C)$ for all $U \in \J_0$. Iterating this, we see that if $F$ is $0$-polynomial then, $F(U) \simeq F(U \oplus V)$ for all $U,V \in \J_0$. For the converse, if $F$ is homotopically constant, $F(U) \simeq F(U \oplus \C)$ and hence the condition of Definition \ref{definition: polynomial functor} is satisfied. 
\end{ex}

\subsection{Polynomial approximation} 

If $E$ is $n$-polynomial then the canonical map $\rho: E \longrightarrow \tau_nE$ is a levelwise weak equivalence, and hence all the morphisms in the diagram 
\[
\xymatrix{
E \ar[r]^{\rho} &  \tau_nE \ar[r]^{\rho} & \cdots \ar[r]^\rho &  \tau_n^k E \ar[r]^{\rho} & \cdots
}
\]
are levelwise weak equivalences. Moreover by construction, the $k$-th iterate $\tau_n^k$ of the functor $\tau_n$ is determined by its behaviour on vector spaces of dimension at least $k$, since 
\[
\tau_n^k F(0) = \underset{0 \neq U_1, \cdots, U_k \subseteq \C^{n+1}}{\holim} F(U_1 \oplus \cdots U_k).
\]
Polynomial functors are said to be ``determined by their behaviour at infinity''. This property motivates the following definition. 

\begin{definition}\label{definition: polynomial approximation}
Let $n$ be a non-negative integer. Define the \textit{$n$-th polynomial approximation}, $T_nF$, of a unitary functor $F$ to be the homotopy colimit of the filtered diagram
\[
\xymatrix{
F \ar[r]^{\rho} &  \tau_nF \ar[r]^{\rho} &  \tau_n^2F \ar[r]^{\rho} &  \tau_n^3F \ar[r]^{\rho} & \cdots.  
}
\]
\end{definition}

\begin{rem}
Equivalently we could have defined $T_nF$ to be the homotopy colimit of the filtered diagram
\[
\xymatrix{
F \ar[r]^{\rho} &  \tau_nF \ar[r]^{\tau_n(\rho)} &  \tau_n^2F \ar[r]^{\tau_n^2(\rho)} &  \tau_n^3F \ar[r]^{\tau_n^3(\rho)} & \cdots.  
}
\]
The direct systems are weakly equivalent and hence define isomorphic homotopy colimits. 
\end{rem}

\begin{ex}
The zeroth polynomial approximation of a unitary functor $F$ is $F(\C^\infty)$. Indeed, it is clear that 
\[
T_0 F(V) = \hocolim_k \tau_0^kF(V) = \hocolim_k F(V \oplus \C^k) \simeq F(\C^\infty). 
\]
\end{ex}

In particular, we see that the zeroth polynomial approximation of the sphere functor $\mathbb{S} : V \longmapsto S^V$ is $S^\infty \simeq \ast$, the zeroth polynomial approximation of $\mathsf{BU}(-)$ is the space $\mathsf{BU}$, and the zeroth polynomial approximation of $\U(-): V \longmapsto \U(V)$ is the infinite unitary group $\U$. 

The Taylor polynomials $p_n(x)$ of a function $f: \R \longrightarrow \R$ are in some ways the closest polynomial functions to $f$. In particular if $f$ is $n$-polynomial, then $f(x) = p_n(x)$. This result has a categorification in that if $F$ is $n$-polynomial then $F \simeq T_nF$. 

\begin{prop}\label{equiv to poly approx}
Let $n$ be a non-negative integer. If $F$ is $n$-polynomial, then the canonical map $\eta : F \longrightarrow T_nF$ is a levelwise weak homotopy equivalence.
\end{prop}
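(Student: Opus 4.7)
The plan is to show that $\tau_n$ preserves the property of being $n$-polynomial, so that every map in the filtered diagram defining $T_n F$ is a levelwise weak equivalence, and then to invoke the fact that the homotopy colimit of a sequence of weak equivalences is weakly equivalent to its initial term.

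First, I would record the hypothesis: since $F$ is $n$-polynomial, the canonical map $\rho \colon F \to \tau_n F$ is a levelwise weak homotopy equivalence by Definition \ref{definition: polynomial functor}. Next, I would observe that $\tau_n$ preserves levelwise weak equivalences. This is because $\tau_n F(V)$ is built as a homotopy limit over the internal category of non-zero complex subspaces of $\C^{n+1}$, and the standard constructions of such homotopy limits (as in Hollender--Vogt or Lind) are homotopy invariant in the functorial argument; every space is fibrant in $\T$, so no further fibrant replacement issue arises.

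The next step is to upgrade $F \xrightarrow{\simeq} \tau_n F$ to the statement that $\tau_n F$ is itself $n$-polynomial. Naturality of $\rho$ gives the commutative square
\[
\xymatrix{
F \ar[r]^{\rho} \ar[d]_{\rho} & \tau_n F \ar[d]^{\rho} \\
\tau_n F \ar[r]_{\tau_n \rho} & \tau_n^2 F,
}
\]
in which the top horizontal and the left vertical maps are levelwise weak equivalences by the $n$-polynomial assumption, and the bottom horizontal map $\tau_n \rho$ is a levelwise weak equivalence by the homotopy invariance of $\tau_n$. Two-out-of-three applied to this square then forces the right vertical map $\rho \colon \tau_n F \to \tau_n^2 F$ to be a levelwise weak equivalence, so $\tau_n F$ is again $n$-polynomial. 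Iterating, every $\tau_n^k F$ is $n$-polynomial and every structure map $\tau_n^k F \to \tau_n^{k+1} F$ in the filtered diagram is a levelwise weak equivalence.

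To conclude, I would evaluate at a fixed $V \in \J_0$ and note that the filtered sequence
\[
F(V) \xrightarrow{\simeq} \tau_n F(V) \xrightarrow{\simeq} \tau_n^2 F(V) \xrightarrow{\simeq} \cdots
\]
consists of weak equivalences in $\T$; since sequential homotopy colimits in $\T$ preserve levelwise weak equivalences (and agree, up to weak equivalence, with telescopes), the canonical inclusion of the initial term $F(V) \to \hocolim_k \tau_n^k F(V) = T_n F(V)$ is a weak homotopy equivalence. As $V$ was arbitrary, $\eta \colon F \to T_n F$ is a levelwise weak equivalence, as required. The only real subtlety, and the step I would be most careful to justify rather than routine, is the homotopy invariance of $\tau_n$, which depends on the chosen model of homotopy limit over the internal poset of non-zero subspaces of $\C^{n+1}$; once this is in hand, the argument reduces to formal model-categorical manipulations.
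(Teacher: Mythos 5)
Your proposal is correct and takes essentially the same route as the paper: show that $\tau_n$ of an $n$-polynomial functor is again $n$-polynomial, so every map in the sequence defining $T_nF$ is a levelwise weak equivalence, and then conclude from the behaviour of sequential homotopy colimits. The paper justifies the middle step by appealing to the commutation of finite homotopy limits rather than your naturality-square plus two-out-of-three argument, but this is a minor variant of the same idea (both ultimately rest on the homotopy invariance of $\tau_n$), so no substantive difference remains.
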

\begin{proof}
Since $F$ is $n$-polynomial, $\rho : F \longrightarrow \tau_n F$ is a levelwise weak equivalence. Since finite homotopy limits (particularly those defining $\tau_nF$) commute, $\tau_nF$ is $n$-polynomial when $F$ is. It follows by the properties of homotopy colimits that $\eta : F \longrightarrow T_nF$ is a levelwise weak equivalence. 
\end{proof}

The $n$-th polynomial approximation is the closest $n$-polynomial functor to $F$, that is, if $\nu: F \longrightarrow E$ is a map in $\E_0$, and $E$ is $n$-polynomial, then $\nu$ factors (up to homotopy) through the map $\eta: F \longrightarrow T_nF$. Possibly the most straightforward way to see that $T_nF$ is the closest $n$-polynomial functor to $F$ is the construction of a model structure of which $T_nF$ is a model for the fibrant replacement of $F$. The construction of such a model structure is similar to that of Barnes and Oman \cite[Proposition 6.5, Proposition 6.6]{BO13} in the orthogonal calculus case. The first step is showing that $T_nF$ is always $n$-polynomial, which follows readily from a unitary version of \cite{We98}.

\begin{lem}\label{Tn is n-polynomial}
Let $n$ be a non-negative integer. If $F  \in \E_0$, then $T_nF$ is $n$-polynomial. 
\end{lem}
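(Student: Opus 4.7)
The plan is to show that the canonical map $\rho \colon T_nF \longrightarrow \tau_n(T_nF)$ is a levelwise weak equivalence, by exploiting the interaction between the filtered homotopy colimit defining $T_n$ and the finite homotopy limit defining $\tau_n$.

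First I would unpack the right-hand side. By definition $T_nF = \hocolim_k \tau_n^k F$, so
\[
\tau_n(T_nF)(V) = \underset{0 \neq U \subseteq \C^{n+1}}{\holim}\, \hocolim_k \tau_n^k F(V \oplus U).
\]
The central technical step is then to commute these two operations, producing a weak equivalence
\[
\underset{0 \neq U \subseteq \C^{n+1}}{\holim}\, \hocolim_k \tau_n^k F(V \oplus U) \xrightarrow{\ \simeq\ } \hocolim_k \underset{0 \neq U \subseteq \C^{n+1}}{\holim}\, \tau_n^k F(V \oplus U) = \hocolim_k \tau_n^{k+1} F(V).
\]
Once this is in hand, the map $\rho$ on $T_nF$ is identified with the map from $\hocolim_k \tau_n^k F$ into $\hocolim_k \tau_n^{k+1} F$ induced by shifting the filtered diagram by one, and this is a weak equivalence since the two directed systems are cofinal in one another.

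The main obstacle is justifying the commutation of $\tau_n$ with the filtered homotopy colimit. This is not a formal consequence of finiteness in the usual discrete sense, because the indexing ``poset'' $\{0 \neq U \subseteq \C^{n+1}\}$ is a category internal to $\T$ whose object and morphism spaces are disjoint unions of Grassmannian and flag manifolds. To handle this I would appeal to the homotopy limit/colimit machinery for topologically enriched diagrams of Hollender--Vogt \cite{HV92} and Lind \cite{Lin09}: the indexing category is compact (its classifying space has the homotopy type of a finite CW complex) and the homotopy limit is built as the totalisation of a cosimplicial object whose matching maps are fibrations of spaces with compact source, so filtered homotopy colimits pass through. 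This is the exact analogue of the corresponding step in the orthogonal case treated in \cite{We95} and \cite{BO13}, with $\R P^{n}$-type strata replaced by $\C P^{n}$-type strata; the argument goes through without change because only compactness of the indexing diagram is used.

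Assuming the commutation, the remainder of the proof is formal: the shift map $\hocolim_k \tau_n^k F \to \hocolim_k \tau_n^{k+1} F$ is a weak equivalence by cofinality, and one checks that under the identification above it agrees with the canonical map $\rho$ induced levelwise. This yields that $\rho \colon T_nF(V) \to \tau_n(T_nF)(V)$ is a weak homotopy equivalence for every $V \in \J_0$, which is precisely the defining condition for $T_nF$ to be $n$-polynomial.
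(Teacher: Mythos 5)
Your overall strategy is the same as the paper's: commute $\tau_n$ (a finite homotopy limit, suitably interpreted over the topologised poset) with the filtered homotopy colimit defining $T_n$, and then compare $\rho_{T_nF}$ with a map of directed systems. The commutation step itself is fine and is asserted in the paper for the same reason you give. The genuine gap is in the part you dismiss as formal. After commuting, $\tau_n(T_nF)(V)$ is identified with the homotopy colimit of the system whose objects are $\tau_n^{k+1}F(V)$ but whose structure maps are $\tau_n(\rho_{\tau_n^kF})$, i.e.\ $\tau_n$ applied to the old structure maps, whereas the ``shifted'' copy of the defining system for $T_nF$ has structure maps $\rho_{\tau_n^{k+1}F}$. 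These are two \emph{different} natural maps $\tau_n^{k+1}F \to \tau_n^{k+2}F$ (naturality of $\rho$ only gives $\tau_n(\rho_G)\circ\rho_G = \rho_{\tau_nG}\circ\rho_G$, not $\tau_n(\rho_G)=\rho_{\tau_nG}$), so ``the two directed systems are cofinal in one another'' does not apply: they are not the same system shifted by one, and the square you need — identifying $\rho_{T_nF}$ under the commutation with the levelwise $\rho$ map into the shifted system — does not commute on the nose. This is precisely the mistake in Weiss's original proof of the orthogonal analogue (\cite[Theorem 6.3]{We95}), which is why the erratum \cite{We98} exists.

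The paper's proof confronts this head on: it reduces to showing the comparison map $r\colon T_nF \to T_n\tau_nF$ induced by the vertical $\rho$'s is a levelwise weak equivalence, and then extends the factorisation argument of the erratum \cite{We98} to the unitary setting, producing for each $k$ a factorisation of the square
\[
\xymatrix{
\tau_n^kF(V) \ar[r] \ar[d]_{\rho} & T_nF(V) \ar[d]^{r} \\
\tau_n^{k+1}F(V) \ar[r] & T_n\tau_nF(V)
}
\]
through an intermediate weak equivalence $X \to Y$, from which the claim follows. So your proposal is not wrong in outline, but the sentence ``one checks that under the identification above it agrees with the canonical map $\rho$ induced levelwise'' hides exactly the non-formal content of the lemma; as written, the argument would repeat the known gap rather than close it. To repair it you either need to reproduce (the unitary version of) the erratum's factorisation, or give some other argument that the systems with structure maps $\rho$ and $\tau_n(\rho)$ have compatibly equivalent homotopy colimits under the map $\rho_{T_nF}$.
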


We now give the $n$-polynomial model structure. 

\begin{prop}\label{prop: n-poly model structure}
Let $n$ be a non-negative integer. There is a cellular proper topological model structure on $\E_0$ where a map $f: E \longrightarrow F$ is a weak equivalence if $T_nf: T_nE \longrightarrow T_nF$ is a levelwise weak equivalence, the cofibrations are the cofibrations of the projective model structure and the fibrations are levelwise fibrations such that 
\[
\xymatrix{
E \ar[r]^f \ar[d]_{\eta_X} & F \ar[d]^{\eta_Y} \\
T_nE \ar[r]_{T_nf} & T_n F
}
\]
is a homotopy pullback square. The fibrant objects of this model structure are precisely the $n$-polynomial functors and $T_n$ is a fibrant replacement functor. We call this the $n$-polynomial model structure and it is denoted $n\poly\E_0$.
\end{prop}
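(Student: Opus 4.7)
The plan is to realise the $n$-polynomial model structure as a left Bousfield localisation of the projective model structure on $\E_0$.

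First, I would identify a generating set $S$ of localising maps. For each $V \in \J$, the homotopy limit defining $\tau_n F(V)$ can be rewritten as an enriched natural transformation space $\nat(\Gamma_{n+1}(V,-), F)$, where $\Gamma_{n+1}(V,-) \in \E_0$ is a cofibrant object built as a homotopy colimit of representables $\J_0(V \oplus U,-)$ over the internal poset $\{0 \neq U \subseteq \C^{n+1}\}$. The canonical map $\rho : F(V) \to \tau_n F(V)$ is then induced, via Yoneda, by a morphism $\gamma_V : \Gamma_{n+1}(V,-) \to \J_0(V,-)$ in $\E_0$. Take $S$ to be the set of cofibrant replacements of these $\gamma_V$ as $V$ ranges over a skeleton of $\J$. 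By construction, a projective-fibrant $F$ is $S$-local precisely when each $\rho$ is a weak equivalence, that is, exactly when $F$ is $n$-polynomial.

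Next, I would invoke Hirschhorn's left Bousfield localisation theorem. The projective model structure on $\E_0$ is cellular, left proper and topologically enriched (\cite[Theorem 6.5]{MMSS01}), so the localisation at $S$ exists as a cellular, left proper, topological model category with the same cofibrations as the projective structure; the general theory identifies its fibrant objects with the $S$-local projective-fibrant objects, and so with the $n$-polynomial functors. To recognise $T_n$ as fibrant replacement, Lemma \ref{Tn is n-polynomial} shows that $T_nF$ is $n$-polynomial and hence $S$-local, while the canonical map $\eta : F \to T_nF$ is a sequential homotopy colimit of iterates of $\rho$, each an $S$-equivalence by definition. Since $S$-equivalences are closed under such colimits, $\eta$ is itself an $S$-equivalence, and this simultaneously pins down the weak equivalences as exactly the maps $f$ with $T_n f$ a levelwise weak equivalence. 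The description of the fibrations is then immediate from the general characterisation of fibrations in a left Bousfield localisation (\cite[Proposition 3.4.7]{Hi03}) as those original fibrations whose comparison square with the fibrant replacement is a homotopy pullback.

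The main obstacle I would expect is right properness, which Hirschhorn's theorem does not supply for free. One must show that the pullback of an $S$-equivalence along an $n$-polynomial fibration is again an $S$-equivalence, and this reduces to showing that $T_n$ preserves homotopy pullbacks: $\tau_n$ is a (homotopy) finite limit and so commutes with homotopy pullbacks levelwise, and sequential homotopy colimits of spaces preserve homotopy pullbacks along fibrations. Care will be required in handling the internal topology of the indexing poset $\{0 \neq U \subseteq \C^{n+1}\}$ when checking that $\Gamma_{n+1}(V,-)$ actually corepresents $\tau_n$ up to weak equivalence; this is the unitary analogue of the bookkeeping used by Barnes--Oman in the orthogonal setting in \cite{BO13}, and the erratum \cite{We98} referenced in the proof of Lemma \ref{Tn is n-polynomial} will likely feature here as well.
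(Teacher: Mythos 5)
Your localising set is, up to the homeomorphism of Theorem \ref{hocolim sphere homeo}, exactly the set $\mathcal{S}_n = \{S\gamma_{n+1}(V,-)_+ \to \J_0(V,-)\}$ that the paper uses, so the first half of your argument (existence of a cellular, left proper, topological localisation whose fibrant objects are the levelwise fibrant $n$-polynomial functors) coincides with the paper's second description. Where you diverge is in how the remaining properties are obtained: the paper also runs the Bousfield--Friedlander localisation \cite{BF78, Bo01} at the endofunctor $T_n$, which hands it right properness, the homotopy-pullback description of the fibrations, and the identification of $T_n$-equivalences with $\mathcal{S}_n$-local equivalences all at once, and then observes that the two structures have the same cofibrations and weak equivalences; you instead propose to prove right properness by hand and to read off the fibrations from general localisation theory.

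The genuine gap is the sentence asserting that the iterates of $\rho$ are ``$S$-equivalences by definition.'' The map $\rho_G : G \to \tau_n G$ is not an element of your set $S$, nor is it obtained from $S$ by cobase change, composition or retract; it is obtained by mapping the maps of $S$ \emph{into} $G$, and such ``dual'' constructions are not $S$-local equivalences for formal reasons. This claim is precisely the nontrivial identification of $T_n$-equivalences with $\mathcal{S}_n$-local equivalences, which is the crux of the proposition (and the point the paper disposes of via the Bousfield--Friedlander comparison). It can be repaired, e.g.\ by a universal-property argument at the level of homotopy function complexes using that $T_n$ is a continuous, homotopy-idempotent coaugmented functor ($T_n\eta_F$ and $\eta_{T_nF}$ are levelwise equivalences by Lemma \ref{Tn is n-polynomial} and Proposition \ref{equiv to poly approx}), or by constructing the $T_n$-localisation as a model structure and invoking that a model structure is determined by its cofibrations and fibrant objects --- but some such argument must be supplied. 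Note also that this gap propagates: your right-properness argument (via $T_n$ preserving homotopy pullbacks, which is fine in itself) only gives right properness of the $S$-localisation once you know the $S$-equivalences are the $T_n$-equivalences; and the characterisation of the fibrations is not a formal feature of arbitrary left Bousfield localisations in \cite{Hi03} --- it requires right properness (or the Bousfield--Friedlander theorem), so it must come after that step rather than being quoted as ``immediate.''
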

\begin{proof}
The Bousfield-Friedlander localisation \cite{BF78, Bo01} of the projective model structure at the endofunctor $T_n : \E_0 \longrightarrow \E_0$ yields the stated model structure.  Note however, that the Bousfield-Friedlander localisation only results in a proper topological model structure. An alternative description as the left Bousfield localisation of the projective model structure at the set of maps 
\[
\mathcal{S}_n = \{ S\gamma_{n+1}(V,-)_+ \longrightarrow \J_0(V,-) \ : \ V \in \J_0\},
\]
yields the cellular requirement. These two descriptions agree since both localisation techniques do not alter the cofibrations and a $T_n$-equivalence in the sense of \cite{BF78, Bo01} is precisely a $\mathcal{S}_n$-local equivalence in the sense of \cite[Definition 3.1.4]{Hi03}.
\end{proof}

Polynomial functors share many properties with polynomial functions. One such property is that an $n$-polynomial functor is $(n+1)$-polynomial. 

\begin{prop}\label{n poly is (n+1) poly}
Let $n$ be a non-negative integer. If a functor $F$ is $n$-polynomial, then it is $(n+1)$-polynomial.
\end{prop}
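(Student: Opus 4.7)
The plan is to adapt the proof of the analogous result in orthogonal calculus \cite[Proposition 5.4]{We95} to the unitary setting. First I would establish a factorisation of the canonical map $\rho_{n+1} : F(V) \to \tau_{n+1} F(V)$ through $\tau_n F(V)$. The standard embedding $\C^{n+1} \hookrightarrow \C^{n+2}$ induces a fully faithful inclusion of indexing posets
\[
P_n := \{0 \neq U \subseteq \C^{n+1}\} \hookrightarrow P_{n+1} := \{0 \neq U \subseteq \C^{n+2}\},
\]
and restriction of the defining homotopy limit along this inclusion produces a natural map $r : \tau_{n+1} F(V) \to \tau_n F(V)$ satisfying $\rho_n = r \circ \rho_{n+1}$, i.e.\ a commutative triangle
\[
\xymatrix{
F(V) \ar[r]^{\rho_{n+1}} \ar[dr]_{\rho_n} & \tau_{n+1} F(V) \ar[d]^{r} \\
 & \tau_n F(V).
}
\]
Since $\rho_n$ is a weak equivalence by the hypothesis that $F$ is $n$-polynomial, the two-out-of-three property reduces the proposition to proving that $r$ is a weak equivalence.

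To show $r$ is a weak equivalence I would exploit $n$-polynomiality termwise inside the homotopy limit defining $\tau_{n+1} F(V)$. For each $U \in P_{n+1}$ the value $F(V \oplus U)$ is weakly equivalent, via $\rho_n$ applied to $V \oplus U$, to $\tau_n F(V \oplus U) = \holim_{U' \in P_n} F(V \oplus U \oplus U')$. Substituting this replacement into the outer homotopy limit and commuting the two limits rewrites $\tau_{n+1} F(V)$ as a homotopy limit indexed by $P_n \times P_{n+1}$ of values of $F$ on sums $V \oplus U' \oplus U$. A cofinality/reindexing argument, using that every summand $U' \oplus U$ with $U' \in P_n$ and $U \in P_{n+1}$ is absorbed (up to equivalence in the homotopy limit) by a single subspace of $V \oplus \C^{n+1}$, then identifies this reorganised diagram with the homotopy limit defining $\tau_n F(V)$. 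This is exactly the statement that $r$ is a weak equivalence.

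The main obstacle is the topological enrichment of the indexing posets: $P_n$ and $P_{n+1}$ are not discrete posets but internal categories in $\T$, topologised by the spaces of complex Grassmannian and flag manifolds, so the Fubini-type commutation of limits and the cofinality arguments above must be carried out as genuine enriched-homotopy-limit statements. This is handled using the machinery of Hollender--Vogt \cite{HV92} and Lind \cite{Lin09} for homotopy limits indexed on categories internal to $\T$, already invoked to make sense of $\tau_n$. Once these topological subtleties are addressed, the conclusion follows from the two-out-of-three argument above.
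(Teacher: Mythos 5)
The reduction at the start is fine: $\rho_n = r\circ\rho_{n+1}$ does hold, so by two-out-of-three the proposition is equivalent to showing that the restriction $r:\tau_{n+1}F(V)\to\tau_nF(V)$ is a weak equivalence, and the termwise replacement $\tau_{n+1}F\simeq\tau_{n+1}\tau_nF\cong\tau_n\tau_{n+1}F$ (using $n$-polynomiality, preservation of levelwise equivalences by $\holim$, and Fubini) is also unobjectionable. The gap is the final ``cofinality/reindexing'' step, and that is where the entire content of the proposition sits. Read as a statement about the indexing categories alone --- which is all a cofinality argument can be, since it must hold for every diagram --- the claimed identification $\holim_{P_n\times P_{n+1}}F(V\oplus U'\oplus U)\simeq\holim_{P_n}F(V\oplus U')$ is false: for $n=0$ the poset $P_0$ has the single object $\C$, and the claim reads $\tau_1F(V\oplus\C)\simeq F(V\oplus\C)$ for \emph{every} $F$, which fails for any functor that is not $1$-polynomial. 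If instead you use the standing hypothesis that $F$ is $n$-polynomial, then via $\tau_n\tau_{n+1}F\simeq\tau_{n+1}\tau_nF\simeq\tau_{n+1}F$ the claim becomes exactly $\tau_{n+1}F(V)\simeq\tau_nF(V)\simeq F(V)$, i.e.\ the proposition itself, so the step is circular. The ``absorption'' heuristic also does not typecheck: $U'\oplus U$ can have dimension up to $2n+3$ and cannot be traded for a nonzero subspace of $\C^{n+1}$, and there is no homotopy-final functor between $P_n$ and $P_n\times P_{n+1}$ effecting such an exchange.

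The missing ingredient is that this implication is not a formal property of the towers of homotopy limits; it requires geometric input about the complement bundles. The paper proves it by citing Weiss \cite[Proposition 5.4]{We95}, whose argument first reformulates $n$-polynomiality in terms of the sphere bundles $S\gamma_{n+1}(V,W)$ (via the homeomorphism of Theorem \ref{hocolim sphere homeo}, the unitary analogue of \cite[Proposition 4.2]{We95}, together with \cite[Proposition 5.2]{We95}) and then runs a vector-bundle comparison of $S\gamma_{n+2}(V,W)$ with $S\gamma_{n+1}(V,W)$; the unitary statement follows because the relevant properties of real vector bundles hold verbatim for the complex bundles $\gamma_i(V,W)$. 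To repair your proof you would need to import this geometric argument (or an equivalent one): the enriched homotopy-limit machinery of \cite{HV92} and \cite{Lin09} handles the Fubini-type bookkeeping you invoke, but it cannot supply this step.
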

\begin{proof}
This is the unitary version of \cite[Proposition 5.4]{We95}. The properties of real vector bundles used by Weiss transfer to complex vector bundles, and hence so does the result. 
\end{proof}

We will return to further properties of $n$-polynomial functors once we have introduced the notion of the derivative of a functor. 

\subsection{The Taylor Tower} With the theory of polynomial functors and polynomial approximations in place, we can construct the Taylor tower approximating $F \in \E_0$. This tower is a categorification of the Taylor series associated to a function $f: \R \longrightarrow \R$. The differential Taylor's series is a sequence of polynomial approximations which converges to the function $f$. In this case, we get a tower of polynomial approximations, the limit of which - in nice cases - recovers the input functor. We return to the question of convergence in Section \ref{section: convergence}.

There is an inclusion functor from the poset of non-zero subspaces of $\C^{n-1}$ to the poset of non-zero subspaces of $\C^n$. For a functor $F \in \E_0$, precomposition with the inclusion defines a map 
\[
\tau_nF(V) = \underset{0 \neq U \subseteq \C^n}{\holim}F(U \oplus V) \longrightarrow \underset{0 \neq U \subseteq \C^{n-1}}{\holim}F(U \oplus V) = \tau_{n-1}F(V).
\]
Iteration constructs a map $\tau_n^k F \longrightarrow \tau_{n-1}^k F$. The construction is coherent in that the diagram 
\[
\xymatrix{
\underset{0 \neq U_1, \cdots, U_k \subseteq \C^n}{\holim}F(U_1 \oplus \cdots \oplus U_k \oplus V) \ar[r] \ar[d]_{\rho} &  \underset{0 \neq U_1, \cdots, U_k \subseteq \C^{n-1}}{\holim}F(U_1 \oplus \cdots \oplus U_k \oplus V)  \ar[d]^{\rho}\\
\underset{0 \neq U_1, \cdots, U_{k+1} \subseteq \C^n}{\holim}F(U_1 \oplus \cdots \oplus U_{k+1} \oplus V)\ar[r] &  \underset{0 \neq U_1, \cdots, U_{k+1} \subseteq \C^{n-1}}{\holim}F(U_1 \oplus \cdots \oplus U_{k+1} \oplus V) 
}
\]
commutes. As such, we get a map of homotopy colimits, $r_n : T_nF \longrightarrow T_{n-1}F$. Moreover $r_n \eta_n = \eta_{n-1}$, where $\eta_n : F \longrightarrow T_nF$. The result is a Taylor tower of the following form. 
\[
\xymatrix@C+1cm{
		&			&	\ar@/_1pc/[dl]	F \ar[d]	  \ar@/^1pc/[drr]  \ar@/^1.3pc/[drrr]   &			     	&			& \\
 \cdots \ar[r] & T_{n+1}F \ar[r]_{r_{n+1}} & T_nF \ar[r]_{r_n} & \cdots \ar[r]_{r_2} & T_1F \ar[r]_{r_1} & F(\C^\infty) 
}
\]

\subsection{Homogeneous functors} The $n$-th layer of the Taylor tower is given by the homotopy fiber of
\[
r_n : T_nF \longrightarrow T_{n-1}F.
\]
This functor is  both $n$-polynomial and its $(n-1)$-st polynomial approximation is trivial. 

\begin{example}
Let $F$ be a unitary functor and $n$ a positive integer. The homotopy fibre
\[
D_nF = \hofibre[T_nF \longrightarrow T_{n-1}F],
\]
is $n$-polynomial and $T_{n-1}D_nF$ is trivial. First note that by using the fact that homotopy limits commute, the long exact sequence of a fibration and the Five Lemma, that the homotopy fibre of a map between $n$-polynomial functors is $n$-polynomial. Since an $(n-1)$-polynomial object is $n$-polynomial and the homotopy fibre of a map between $n$-polynomial objects is $n$-polynomial, $D_nF$ is $n$-polynomial. Moreover, the $(n-1)$-st polynomial approximation of $D_nF$ is trivial since
\[
\begin{split}
T_{n-1}D_n F &= T_{n-1} \hofibre[T_nF \longrightarrow T_{n-1}F] \simeq \hofibre[T_{n-1}T_nF \longrightarrow T_{n-1}^2 F] \\ 
&\simeq \hofibre[T_nT_{n-1}F \longrightarrow T_{n-1}F]\simeq \hofibre[T_{n-1}F \longrightarrow T_{n-1}F] \simeq \ast.
\end{split}
\]
\end{example}

Functors with these properties are a special subclass of $n$-polynomial functors, called $n$-homogeneous functors. One should think of these homogeneous functors as monomial functions, representing the terms in the Taylor series from differential calculus. 

\begin{definition}\label{definition: homogeneous functor}
Let $n$ be a non-negative integer. A unitary functor $F$ is said to be \textit{$n$-homogeneous} or equivalently \textit{homogeneous of degree less than or equal $n$} if $F$ is $n$-polynomial and $T_{n-1}F$ is levelwise weakly contractible. We will refer to a functor with trivial $(n-1)$-st polynomial approximation as being \textit{$n$-reduced}. 
\end{definition}

\section{The derivative of a functor}\label{section: derivatives}

An important concept in any theory of calculus is that of derivative. In the differential setting, the derivatives are used in calculating the terms of the polynomial approximations. The same remains true in the unitary calculus setting, where the derivatives (or more precisely the spectrum formed by the derivatives) are used to characterise the layers of the Taylor tower. 

The construction of the derivative is completely analogous to that of Weiss \cite{We95} for the derivative of an orthogonal functor. In particular, the $n$-th derivative of a unitary functor determines a unitary spectrum with an action of $\U(n)$. 

\subsection{Derivatives} From the derivatives of a unitary functor we can construct a unitary spectrum, and hence an orthogonal spectrum. We first define the derivative of a functor, and highlight the justification for this construction being called a derivative, specifically, homotopically constant functors have trivial derivative, and the $(n+1)$-st derivative of an $n$-polynomial functor is trivial. The starting point is the construction of ``higher'' versions of $\J_1$, specifically, categories $\J_n$ for all $n$. Sitting over the space of linear isometries $\J(U,V)$ is the $n$-th complement vector bundle, with total space
\[
\gamma_n(U,V) = \{ (f,x) \ : \ f \in \J(U,V), x \in \C^n \otimes f(U)^\perp\},
\]
where $f(U)^\perp$ denotes the orthogonal complement of $f(U)$ in $V$. 

The vector bundle $\gamma_n(U,V)$ comes with a sphere bundle $S\gamma_n(U,V)$ given by the one-point compactification of the fibres. In \cite[Theorem 4.1, Proposition 4.2]{We95} Weiss constructs a homeomorphism between the sphere bundle and a particular homotopy colimit. To prove the equivalence between the intermediate category and the $n$-homogeneous model structure we need a similar result, which we give here. For the proof, we will denote by $\mathcal{C}$ the poset of non-zero subspaces of $\C^{n+1}$. 

\begin{thm}\label{hocolim sphere homeo}
Let $n$ be a non-negative integer. There is a natural homeomorphism, 
\begin{equation*}
\underset{0 \neq U \subset \C^{n+1}}{\hocolim}  \J(U \oplus V, W) \longrightarrow S\gamma_{n+1}(V,W),
\end{equation*}
for every $V,W \in \J$.
\end{thm}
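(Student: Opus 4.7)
The plan is to follow the strategy of Weiss's original proof of the orthogonal analogue (\cite[Theorem 4.1, Proposition 4.2]{We95}, with corrections in \cite{We98}), adapting each step to complex inner product spaces. The geometric constructions involved -- orthogonal complement, sphere bundle, Stiefel manifold -- admit direct complex analogues, so the overall argument transfers with essentially notational changes once the hocolim is given a concrete simplicial model.

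First I construct the candidate homeomorphism $\Phi$. For each non-zero $U \subseteq \C^{n+1}$ and each $g \in \J(U \oplus V, W)$, set $f = g|_V \in \J(V, W)$; since $g$ is an isometric embedding, $g|_U$ lands in $f(V)^\perp \subseteq W$. Extending $g|_U$ by zero on the orthogonal complement of $U$ in $\C^{n+1}$ yields a linear map $\tilde\phi : \C^{n+1} \to f(V)^\perp$, which I identify with an element of $\C^{n+1} \otimes f(V)^\perp$ via the standard Hermitian inner product on $\C^{n+1}$. I send $g$ to $(f, \tilde\phi) \in \gamma_{n+1}(V, W)$. Using the bar-construction model of $\hocolim$, these pointwise maps assemble into a continuous map $\Phi$ once I verify compatibility with face maps; the simplicial coordinates handle the interpolation that appears on chains $U_0 \subseteq \cdots \subseteq U_k$.

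Second I show that $\Phi$ is a bijection. For surjectivity, given a non-basepoint $(f, x) \in S\gamma_{n+1}(V,W)$ with $x$ corresponding to $\phi : \C^{n+1} \to f(V)^\perp$, the subspace $U = (\ker \phi)^\perp$ together with a polar decomposition of $\phi|_U$ produces a representative $g \in \J(U \oplus V, W)$ with $\Phi(g) = (f, x)$; the basepoint is hit by the cone vertex of the hocolim. For injectivity, the identifications in the bar construction correspond exactly to the ambiguity in zero-extending $g|_U$ across larger subspaces $U' \supseteq U$, so no further collapsing occurs in $S\gamma_{n+1}(V,W)$.

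Third I upgrade this continuous bijection to a homeomorphism via a natural filtration. Both spaces carry a compatible filtration: the source by the minimal dimension of $U$ appearing in a representative, the target by the rank of $\phi$. On each associated graded piece, both sides are identifiable with bundles over the complex Grassmannian of $k$-planes in $\C^{n+1}$ whose fibres are manifestly homeomorphic through $\Phi$. An inductive argument across the filtration then upgrades $\Phi$ to a global homeomorphism; alternatively, one appeals to the fact that each $\J(U \oplus V, W)$ is a compact Stiefel manifold and the filtration is by closed subsets, so a stratum-by-stratum continuous bijection onto a Hausdorff space is a homeomorphism.

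The main obstacle is the coherent matching of the simplicial bar-construction hocolim with the stratification of $S\gamma_{n+1}(V,W)$, ensuring that the face maps precisely reproduce the identifications coming from zero-extension into larger subspaces; this is the point at which Weiss's original argument required the correction in \cite{We98}, and the same care is needed here. Fortunately everything reduces to standard facts about complex Stiefel manifolds, which parallel the real case without further subtlety.
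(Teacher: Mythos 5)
There is a genuine gap, and it sits exactly at the point you defer. Your formula is only defined on the zero-simplices of the bar construction: sending $g \in \J(U \oplus V, W)$ to $(g|_V,\ \text{zero-extension of } g|_U)$ makes no use of the simplex coordinates, so it does not descend to the homotopy colimit. Concretely, over a chain $U_1 \subseteq U_0$ the edge $\{z\} \times \Delta^1$ has its two endpoints identified with $z \in \J(U_0 \oplus V, W)$ and with the restriction $z|_{U_1 \oplus V} \in \J(U_1 \oplus V, W)$; your formula sends these to the zero-extensions of $z|_{U_0}$ and of $z|_{U_1}$, which are distinct points of $\gamma_{n+1}(V,W)$, while a $p$-independent formula would have to send the whole edge to a single point. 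The phrase ``the simplicial coordinates handle the interpolation'' is not a verification but the entire content of the theorem, and no candidate interpolation is given. The same omission sinks surjectivity: your formula only ever outputs $(f,x)$ with $x$ a partial isometry (so $x^*x$ is, up to scale, an orthogonal projection, and indeed of norm $\sqrt{\dim U}$ rather than $1$), whereas a general point of the sphere bundle has $x^*x$ with arbitrary distinct positive eigenvalues; taking $U = (\ker \phi)^\perp$ and the polar part $g$ of $\phi|_U$ gives $\Phi(g) = (f,\ \text{polar part of } \phi)$, not $(f,x)$. The ``cone vertex hitting the basepoint'' remark does not repair this, since the unit sphere bundle has no such point.

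The missing mechanism is precisely what the paper's proof supplies: given $(f,x)$, regard $x \in \Hom(\C^{n+1}, W - f(V))$ and apply the spectral theorem to the self-adjoint map $x^*x$. Its distinct nonzero eigenvalues $\lambda_0 < \dots < \lambda_k$ produce (i) a flag of sums of eigenspaces, i.e.\ a functor $[k] \to \{0 \neq U \subseteq \C^{n+1}\}$, (ii) an isometry $z$ equal to $f$ on $V$ and to $\lambda_i^{-1/2}x$ on each eigenspace, (iii) a barycentric coordinate $p = \lambda_k^{-1}(\lambda_0, \lambda_1 - \lambda_0, \dots, \lambda_k - \lambda_{k-1}) \in \Delta^k$, and (iv) a scale $t = \lambda_k$. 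This gives a homeomorphism $\J_{n+1}(V,W) \setminus \J(V,W)_+ \cong (0,\infty) \times \hocolim_U \J(U \oplus V, W)$, which is then compared with $\J_{n+1}(V,W) \setminus \J(V,W)_+ \cong (0,\infty) \times S\gamma_{n+1}(V,W)$; the extra $(0,\infty)$ factor is what absorbs the normalisation problem your direct map runs into. Your partial-isometry points are exactly the $k=0$ stratum; every interior simplex carries the eigenvalue data your proposal omits, so the eigenvalue/barycentric correspondence (including the check that eigenvalues of $x^*x$ are real and that one has honest, not generalised, eigenspaces over $\C$) is the step you must actually construct, and without it the filtration and compactness remarks in your third step have nothing to apply to.
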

\begin{proof}
The goal is to construct a homeomorphism
\begin{equation*}
\xymatrix{
\Psi : \J_{n+1}(V,W) \setminus \J(V,W)_+ \ar[r] & (0, \infty) \times \hocolim_U \J(U \oplus V, W),
}
\end{equation*}
and use the identification, $\J_{n+1}(V,W) \setminus \J(V,W)_+ \cong (0,\infty) \times S\gamma_{n+1}(V,W)$, to yield a homeomorphism
\begin{equation*}
\underset{0 \neq U \subset \C^{n+1}}{\hocolim}  \J(U \oplus V, W) \longrightarrow S\gamma_{n+1}(V,W).
\end{equation*}

Take $(f,x) \in \J_{n+1}(V,W) \setminus \J(V,W)_+$, that is $f \in \J(V,W)$ and $0\neq x \in (n+1) \cdot (f(V)^\perp)$. Since $f(V)^\perp$ and $\C^{n+1}$ are finite dimensional inner product spaces we have
\begin{equation*}
\begin{split}
\C^{n+1} \otimes (f(V)^\perp) \cong (\C^{n+1})^* \otimes (f(U)^\perp) &= \Hom(\C^{n+1}, \C) \otimes f(U)^\perp \\
&\cong \Hom(\C^{n+1}, f(U)^\perp),
\end{split}
\end{equation*}
where $(\C^{n+1})^*$ is the dual space of $\C^{n+1}$. 

Thus we can think of $x \in \Hom(\C^{n+1}, f(V)^\perp)$. Moreover, $x$ has an adjoint 
\[
x^* : f(V)^\perp \longrightarrow \C^{n+1}.
\]  

By the properties of the dual transformation it follows that
\[
(x^*x)^* = x^*(x^*)^* = x^*x,
\]
that is, $x^*x : \C^{n+1} \longrightarrow \C^{n+1}$ s a self adjoint map, hence normal, in particular, $(x^*x)^*(x^*x)= (x^*x)(x^*x)^*$. By the Spectral Theorem, see for instance Friedberg, Insel, and Spence \cite[Theorem 6.24]{FIS89}, $\C^{n+1}$ can be written as the direct sum of the eigenspaces corresponding to the distinct eigenvalues of $x^*x$, that is, 
\begin{equation*}
\C^{n+1} = \ker(x^*x) \oplus E(\lambda_0) \oplus \dots \oplus E(\lambda_k)`
\end{equation*}
where $0 < \lambda_0 < \lambda_1 < \dots < \lambda_k$ are the eigenvalues, and 
\begin{equation*}
E(\lambda_i) =\{ r \in \C^{n+1} \ : \ (x^*x - \lambda_i   \id)(r)=0 \}.
\end{equation*}
is the eigenspaces corresponding the eigenvalue $\lambda_i$. Note that all the eigenvalues are real by \cite[Lemma pg.329]{FIS89}, and by the Spectral Theorem \cite[Theorem 6.24]{FIS89} we really mean the eigenspace, and not the generalised eigenspace. 

Hence, given $(f,x) \in \J_{n+1}(V,W) \setminus \J(V,W)_+$ as above, define
\begin{description}
\item[1)] a functor $G : [k] \longrightarrow \mathcal{C}$ given by 
\begin{equation*}
r \longmapsto E(\lambda_0) \oplus \dots \oplus E(\lambda_{k-r})
\end{equation*}

\item[2)] a linear isometry $z \in \J(G(0) \oplus V, W)$ given by 
\begin{equation*}
z = \begin{cases}
f \ \text{on} \ V \\
\lambda_i^{-1/2}\cdot x \ \text{on} \ E(\lambda_i)
\end{cases}
\end{equation*}
Note that $z$ is clearly an isometry on $V$, and $z$ is also an isometry on each $E(\lambda_i)$. Indeed, let $v,w \in E(\lambda_i)$. Then
\[
\begin{split}
\langle z(v), z(w) \rangle &= \langle (\lambda_i)^{-1/2}x(v), (\lambda_i)^{-1/2}x(w) \rangle \\
&= (\lambda_i)^{-1} \langle x(v), x(w) \rangle \\
&= (\lambda_i)^{-1} \langle v, x^*x(w) \rangle \\ 
&= (\lambda_i)^{-1} \langle v, \lambda_i w \rangle\\ 
&= \langle v, w \rangle.
\end{split}
\]
\item[3)] a point $p \in \Delta^k$ given by the barycentric coordinates
\begin{equation*}
\lambda_k^{-1} \cdot (\lambda_0, \lambda_1 - \lambda_0, \lambda_2-\lambda_1, \dots, \lambda_k - \lambda_{k-1})
\end{equation*}
\item[4)] $t=\lambda_k >0$.
\end{description}
By construction (see \cite[\S VIII.2.6]{BK72}), $\hocolim_U \J(U \oplus V, W)$ (or more specifically, the geometric realisation of the bar construction) is a quotient of 
\begin{equation*}
\coprod_{k \geq 0} \coprod_{G:[k] \longrightarrow \mathcal{C}} \J(G(0) \oplus V, W) \times \Delta^k
\end{equation*}
under the identifications involving the respective face and degeneracy maps for 
\[
\coprod_{G:[k] \longrightarrow \mathcal{C}} \J(G(0) \oplus V, W)
\]
and $\Delta^k$ respectively, again, see \cite[\S VIII.2.6]{BK72} for the details. It follows that the triple $(G,z,p)$ defines a point in $\hocolim_U \J(U \oplus V, W)$. 

Define a map 
\begin{equation*}
\begin{split}
\Psi : \J_{n+1}(V,W) \setminus \J(V,W)_+ &\longrightarrow (0, \infty) \times \hocolim_U \J(U \oplus V, W)\\
(f,x) &\longmapsto (t, G, z, p)
\end{split}
\end{equation*}
This map is well defined since $(G,z,p)$ defines a point in $\hocolim_U \J(U \oplus V, W)$.

\underline{$\Psi$ is injective:}
Let $(f,x),(g,y) \in \J_{n+1}(V,W) \setminus \J(V,W)_+$ be such that 
\[
\Psi(f,x) = (t,G,z,p) = (t', G', z', p')= \Psi(g,x).
\]

Since $t=t'$, $x^*x$ and $y^*y$ must have the same largest eigenvalue, and $p=p'$ together with $x^*x$ and $y^*y$ having the same largest eigenvalue implies that $x^*x$ and $y^*y$ have the same set of non-zero eigenvalues $\lambda_0 < \lambda_1 < \dots < \lambda_k$. Combining this with the fact that $G=G'$, we have that  $x^*x$ and $y^*y$ must have the same eigenspaces. Moreover, since $z=z'$, $z|_V = Z'|_V$, and hence, $f=g$. Moreover $z|_{G(0)} = z'|_{G(0)}$.  It follows that \mbox{$z|_{E(\lambda_i)} = z'|_{E(\lambda_i)}$}, for every $i$, and by definition, we see that $x=y$ and hence that $(f,x)=(g,y)$ as required. 

\underline{$\Psi$ is onto:} Here there are two cases to consider. If $k \geq n+1$ we are in the degenerate case.
Otherwise pick $t>0$ and let $\lambda_k=t$. Now choose a point in the interior of $\Delta^k_+$. If we were to choose a point in the boundary of $\Delta^k_+$, we would be choosing a point in a face of $\Delta^k_+$, and faces are given by $\Delta^{k-1}_+$ and smaller. Thus choice of a point determines 
\[
\lambda_0 < \lambda_1 < \dots < \lambda_k \in \C.
\]
Choose orthogonal eigenspaces for each $\lambda_i$, and define $G: [k] \longrightarrow \Delta$ as above, using these chosen eigenspaces. 

Next choose any $z \in \J(G(0) \oplus V, W)$. Then define $f = z|_V$ and $x = (\lambda_i)^{1/2}z|_{E(\lambda i)}$ on $E(\lambda_i)$. Then $x : \C^{n+1} \longrightarrow f(V)^\perp$, and $\Psi(f,x) = (t,G,z,p)$ as required. 

\underline{$\Psi$ is continuous:} This is clear from the construction of $\Psi$ and from the fact that polynomials are continuous. Note that the functor $G$ depends continuously on $(f,x)$ as the eigenvalues and hence eigenspaces depend continuously on $x$.

\underline{$\Psi$ is a homeomorphism:} The reverse of the construction used to show $\Psi$ is onto, gives the inverse to $\Psi$. For the same reasons as $\Psi$ is continuous, this inverse is continuous. 

We thus have 
\begin{equation*}
\Psi : \J_{n+1}(V,W) \setminus \J(V,W)_+ \xlongrightarrow{\cong} (0, \infty) \times \hocolim_U \J(U \oplus V, W)
\end{equation*}
Notice that 
\begin{equation*}
\J_{n+1}(V,W) \setminus \J(V,W)_+ \cong (0,\infty) \times S\gamma_{n+1}(V,W),
\end{equation*}
and hence 
\begin{equation*}
 (0, \infty) \times \hocolim_U \J(U \oplus V, W) \cong (0,\infty) \times S\gamma_{n+1}(V,W).
\end{equation*}
It follows by composing with the inclusion 
\[
\{1\} \times \hocolim_U \J(U \oplus V, W) \longrightarrow (0, \infty) \times \hocolim_U \J(U \oplus V, W),
\]
and projecting to $S\gamma_{n+1}(V,W)$ that 
\begin{equation*}
 \hocolim_U \J(U \oplus V, W) \cong S\gamma_{n+1}(V,W).
\end{equation*}
\end{proof}

Theorem \ref{hocolim sphere homeo} is arguably the most important result in unitary calculus. It firstly allows us to convert the notion of $n$-polynomial into geometric terms, as in \cite[Proposition 5.2]{We95}, and secondly it allows us to argue that the sphere bundle $S \gamma_n(U,V)_+$ is cofibrant in $\E_0$.

\begin{rem}
Although Theorem \ref{hocolim sphere homeo} looks similar to \cite[Proposition 4.2]{We95} there are subtle differences from the differences between complex and real linear algebra. One such difference is the results used to exhibit real eigenvalues. 
\end{rem}

\begin{definition}
Let $n$ be a non-negative integer. The \textit{$n$-th jet category} $\J_n$ is the category with the same objects as $\J$ and with morphism space $\J_n(U,V)$ the Thom space, $\mathrm{Th}(\gamma_n(U,V))$, of the vector bundle $\gamma_n(U,V)$. 
\end{definition}

For $0\leq m < n$, the inclusion of $\C^m$ into $\C^n$ onto the first $m$-components induces a functor $i_m^n : \J_m \longrightarrow \J_n$. Precomposition with such determines a functor which restricts from $\E_n$ to $\E_m$. 

\begin{definition}\label{def: derivative}
Let $0\leq m < n$. Define the \textit{restriction functor} $\res_m^n : \E_n \longrightarrow \E_m$ by precomposition with $i_m^n$, and define the \textit{induction functor} $\ind_m^n : \E_m \longrightarrow \E_n$ to be the right Kan extension along $i_m^n$. Given a functor $F \in \E_0$ we will call $\ind_0^n F$ its $n$-th derivative and denote this by $F^{(n)}$. 
\end{definition}

Using the properties of the adjunction 
\[
\adjunction{\res_m^n}{\E_n}{\E_m}{\ind_m^n},
\]
and the Yoneda Lemma we see that $\ind_m^nF(U) \cong \E_m(\J_n(V,-), F)$. For the purposes of calculations there is a more useful description. 

\begin{prop}[Proposition B]\label{derivatives stable}
Let $n$ be a non-negative integer. There is a homotopy fibre sequence 
\[
\res_n^{n+1}\ind_n^{n+1}F(U) \longrightarrow F(U) \longrightarrow \Omega^{2n}F(U \oplus \C),
\]
for all $F\in \E_n$ and all $U \in \J_0$. 
\end{prop}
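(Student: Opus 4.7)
The plan is to obtain the claimed homotopy fibre sequence by applying the mapping functor $\nat_{\J_n}(-, F)$ to a suitable cofibre sequence of $\T$-enriched functors on $\J_n$. Concretely, I would establish a cofibre sequence, natural in $V \in \J_n$,
\[
S^{2n} \wedge \J_n(U \oplus \C, V) \longrightarrow \J_n(U, V) \xrightarrow{j} \J_{n+1}(U, V),
\]
where $j$ is the inclusion of Thom spaces induced by the subbundle $\gamma_n(U, V) \hookrightarrow \gamma_{n+1}(U, V)$ coming from $\C^n \oplus 0 \subset \C^{n+1}$.

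To identify the cofibre of $j$, I would perform a polar-type decomposition of the extra coordinate. A point $(f, x_1, \ldots, x_{n+1})$ of $\gamma_{n+1}(U, V) \setminus \gamma_n(U, V)$ has $x_{n+1} \ne 0$, and may be rewritten as a tuple $(g, t, a, y)$: here $g \in \J(U \oplus \C, V)$ extends $f$ by sending the unit vector of $\C$ to $x_{n+1}/|x_{n+1}|$, $t = |x_{n+1}| > 0$, and $(a, y) \in \C^n \oplus (\C^n \otimes g(U \oplus \C)^\perp)$ records the components of $(x_1, \ldots, x_n)$ along $\C \cdot g(e)$ and perpendicular to $g(U \oplus \C)$. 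This yields a natural homeomorphism $\gamma_{n+1}(U, V) \setminus \gamma_n(U, V) \cong \R_{>0} \times \C^n \times \gamma_n(U \oplus \C, V)$, and passing to one-point compactifications (using that $\J(U \oplus \C, V)$ is compact, so $\gamma_n(U \oplus \C, V)^+ \cong \J_n(U \oplus \C, V)$) gives
\[
\J_{n+1}(U, V)/\J_n(U, V) \;\cong\; S^1 \wedge S^{2n} \wedge \J_n(U \oplus \C, V).
\]
Shifting one step back along the Puppe sequence then produces the desired cofibre sequence with predecessor $S^{2n} \wedge \J_n(U \oplus \C, V)$.

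With this cofibre sequence in hand, I would apply the contravariant functor $\nat_{\J_n}(-, F)$. Since the representables $\J_n(U, -)$ and $\J_n(U \oplus \C, -)$ are projectively cofibrant in $\E_n$, standard homotopical arguments turn this cofibre sequence of cofibrant functors into a homotopy fibre sequence of spaces. The three terms are identified by the enriched Yoneda lemma and the $\res_n^{n+1} \dashv \ind_n^{n+1}$ adjunction as $\res_n^{n+1}\ind_n^{n+1}F(U)$, $F(U)$ and $\Omega^{2n} F(U \oplus \C)$ respectively, and the middle map is by construction the adjoint of the enriched structure map $S^{2n} \wedge F(U) \to F(U \oplus \C)$, giving the statement.

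The hard part will be the first step: establishing the cofibre sequence. The polar-type decomposition of $x_{n+1}$ must be carried out naturally in $V$, and careful bookkeeping is needed to see that the extra radial direction $\R_{>0}$ contributes an additional $S^1$ to the cofibre, which is precisely what forces the shift from $2n+1$ to $2n$ when passing to the Puppe predecessor. This is the unitary analogue of Weiss's orthogonal construction, and the arguments used in Theorem \ref{hocolim sphere homeo} provide a useful template, although here one must distinguish carefully between $\gamma_{n+1}$ and its complement.
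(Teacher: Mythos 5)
Your overall skeleton is the same as the paper's: produce the cofibre sequence $\J_n(U\oplus\C,-)\wedge S^{2n}\to\J_n(U,-)\to\J_{n+1}(U,-)$ (the unitary analogue of \cite[Proposition 1.2]{We95}, which the paper simply cites), apply $\E_n(-,F)$, and identify the three terms via Yoneda and the $(\Sigma,\Omega)$- and restriction--induction adjunctions; that second half of your argument is fine. The genuine gap is in how you propose to establish the cofibre sequence. Your computation of the quotient is correct: $\gamma_{n+1}(U,V)\setminus\gamma_n(U,V)\cong\R_{>0}\times\C^n\times\gamma_n(U\oplus\C,V)$, so $\J_{n+1}(U,V)/\J_n(U,V)\cong S^1\wedge S^{2n}\wedge\J_n(U\oplus\C,V)$. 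But ``shifting one step back along the Puppe sequence'' is not a valid inference. From a cofibration $B\hookrightarrow C$ with $C/B\simeq\Sigma A$ you cannot, unstably, conclude that there is a map $A\to B$ with cofibre $C$: the connecting map $C/B\to\Sigma B$ need not desuspend, and even if it did, nothing in this argument identifies the desuspended map with the restricted composition map, which is exactly what you need later when you claim the map $F(U)\to\Omega^{2n}F(U\oplus\C)$ is ``by construction'' the adjoint structure map.

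The fix is to reorganise your own decomposition into a direct mapping-cone presentation rather than a quotient-plus-desuspension argument. Use the radial coordinate $t=|x_{n+1}|$ as the cone coordinate on the chart $(g,t,a,y)$: as $t\to\infty$ (or as $|a|,|y|\to\infty$) points tend to the basepoint, while as $t\to 0$ the point $(g,t,a,y)$ converges to $(g|_U,\,y+(\id_{\C^n}\otimes g)(a))$, which is precisely the value of the restricted composition map $\J_n(U\oplus\C,V)\wedge S^{2n}\to\J_n(U,V)$ on $((g,y),a)$. This exhibits $\J_{n+1}(U,V)$ as the mapping cone of that composition map, naturally in $V$ and $U$, with no desuspension needed; this is what the unitary analogue of Weiss's Proposition 1.2 asserts and is the step the paper's proof imports. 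With that in hand, the rest of your argument (cofibrancy of the representables and of the mapping cone, applying $\E_n(-,F)$, Yoneda and adjunction) goes through as you describe.
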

\begin{proof}
Due to the strong similarities between complex and real linear algebra, \cite[Proposition 1.2]{We95} extends to the unitary setting. The result is a cofibre sequence 
\[
\J_n(U \oplus \C, ) -\wedge S^{2n} \longrightarrow \J_n(U,-) \longrightarrow \J_{n+1}(U,-),
\]
where we have identified the one-point compactification of $\C^n$ with the $2n$-sphere.  Applying the corepresentable functor $\E_n(-, F)$ yields a homotopy fibre sequence 
\[
\E_n(\J_{n+1}(U,-), F) \longrightarrow \E_n(\J_n(U,-), F) \longrightarrow \E_n(\J_n(U \oplus \C,-) \wedge S^{2n}, F).
\]
Combining this with the definition of the induction functor, the Yoneda Lemma and $(\Sigma, \Omega)$-adjunction yields the result. 
\end{proof}

Right Kan extensions can be constructed iteratively, particularly, 
\[
\ind_0^n = \ind_{n-1}^n \ind_{n-2}^{n-1} \cdots \ind_0^1
\]
hence Proposition \ref{derivatives stable} gives a means to iteratively calculate the derivatives.

The category $\E_1$ is equivalent to the category of spectra, see Sections \ref{section: intermediate category} and \ref{section: intermediate category as spectra}, hence the first derivative of a functor may be given in terms of a spectrum. 

\begin{example}\label{example: derivative of BU}
The first derivative of $\mathsf{BU}(-)$ is the shifted unitary sphere spectrum $\Bbb{S}^{1}$ with trivial $U(1)$ action.  The homotopy fibre sequence identifying the derivative, Proposition \ref{derivatives stable}, is precisely the homotopy fibre sequence 
\[
\Sigma S^V \longrightarrow \mathsf{BU}(V) \longrightarrow \mathsf{BU}(V \oplus \C).
\]
Hence, 
\[
\mathsf{BU}^{(1)}(V) \simeq S^{2\dim V +1}, 
\]
and thus
\[
{\mathsf{BU}}^{(1)} \simeq \Bbb{S}^{1}. 
\]
\end{example}

In differential calculus one may use the derivative of a function to determine how far a function is from being polynomial. In particular, the $(n+1)$-st derivative of an polynomial function of degree $n$ is zero. In functor calculus, it is also possible to use the derivative to measure the failure of a functor from being polynomial.

\begin{prop}[Proposition C]\label{prop: hofib of n-poly map}
Let $n$ be a non-negative integer and let $F \in \E_0$. There is a homotopy fibre sequence 
\[
F^{(n+1)}(U) \longrightarrow  F(U) \xlongrightarrow{\rho} \tau_n F(U),
\]
for all $U \in \J_0$. 
\end{prop}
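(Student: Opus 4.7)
The plan is to obtain the desired homotopy fibre sequence by mapping out of a cofibre sequence of representable-type objects in $\E_0$, and then identify the three resulting terms using the Yoneda Lemma together with Theorem \ref{hocolim sphere homeo}.

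First I would note that the argument establishing the cofibre sequence
\[
\J_n(U \oplus \C,-) \wedge S^{2n} \longrightarrow \J_n(U,-) \longrightarrow \J_{n+1}(U,-)
\]
used in the proof of Proposition \ref{derivatives stable} is a special case of the more primitive fact that the Thom space $\J_{n+1}(U,V)$ fits into a cofibre sequence built from the sphere and disk bundles of $\gamma_{n+1}(U,V)$. Since the disk bundle is fibrewise contractible onto its zero section, this yields a cofibre sequence of cofibrant objects in $\E_0$,
\[
S\gamma_{n+1}(U,-)_+ \longrightarrow \J_0(U,-) \longrightarrow \J_{n+1}(U,-),
\]
where $\J_{n+1}(U,-)$ is restricted to $\J_0$ along $i_0^{n+1}$. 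Next I would apply the $\T$-enriched corepresentable functor $\E_0(-,F)$ to obtain a homotopy fibre sequence of spaces
\[
\E_0(\J_{n+1}(U,-), F) \longrightarrow \E_0(\J_0(U,-), F) \longrightarrow \E_0(S\gamma_{n+1}(U,-)_+, F);
\]
cofibrancy of the three terms ensures this is a genuine homotopy fibre sequence (all spaces are fibrant).

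It remains to identify the terms. The middle term is $F(U)$ by the Yoneda Lemma. The left-hand term is $\ind_0^{n+1}F(U) = F^{(n+1)}(U)$ directly from the defining adjunction of the induction functor. For the right-hand term, Theorem \ref{hocolim sphere homeo} presents $S\gamma_{n+1}(U,-)_+$ as the based homotopy colimit
\[
\hocolim_{0 \neq W \subseteq \C^{n+1}} \J_0(W \oplus U,-),
\]
so the adjunction between hocolim and holim for enriched mapping spaces, together with Yoneda applied termwise, yields
\[
\E_0(S\gamma_{n+1}(U,-)_+, F) \;\simeq\; \holim_{0 \neq W \subseteq \C^{n+1}} F(U \oplus W) \;=\; \tau_n F(U).
\]

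Finally I would verify that under these identifications the map $F(U) \to \tau_n F(U)$ arising from the cofibre sequence coincides with the canonical map $\rho$. This is essentially a diagram chase: the map $\J_0(U,-) \to \J_0(W \oplus U,-)$ induced by $W \oplus U \supseteq U$ is precisely what gives the structure map into the homotopy limit defining $\tau_n F(U)$. The main obstacle I anticipate is bookkeeping in Step 1 and the last verification: checking that the Thom-space cofibre sequence is realised at the level of cofibrant objects of $\E_0$ and that the resulting map into $\tau_n F(U)$ really is $\rho$, since it is built out of the topology of the indexing poset of nonzero subspaces of $\C^{n+1}$ encoded by Theorem \ref{hocolim sphere homeo}. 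Once these are settled, the homotopy fibre sequence is immediate.
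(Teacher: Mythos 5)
Your proposal is correct and follows essentially the same route as the paper: the paper's proof simply states that Weiss's argument for \cite[Proposition 5.3]{We95} carries over verbatim to the unitary setting, and that argument is exactly what you have written out, with Theorem \ref{hocolim sphere homeo} playing the role of Weiss's Proposition 4.2 in identifying $\E_0(S\gamma_{n+1}(U,-)_+,F)$ with $\tau_nF(U)$. The cofibrancy points you flag are precisely the ones the paper addresses via Theorem \ref{hocolim sphere homeo} (see also the proof of Proposition \ref{prop: QA for differentiation}), so there is no gap.
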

\begin{proof}
This is the unitary version of \cite[Proposition 5.3]{We95}. The proof works in the exact same fashion. For an alternative perspective - which also transfers easily to the unitary case - see \cite[Lemma 5.5]{BO13}. 
\end{proof}

As a corollary we see that the $(n+1)$-st derivative of an $n$-polynomial functor is trivial.

\begin{cor}\label{cor: n+1 derivative of n-poly}
Let $n$ be a non-negative integer. If $F$ is $n$-polynomial, then $F^{(n+1)}$ is trivial.
\end{cor}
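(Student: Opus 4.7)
The plan is to read the statement directly off Proposition~\ref{prop: hofib of n-poly map} (Proposition~C). That proposition supplies, for every $F \in \E_0$ and every $U \in \J_0$, a homotopy fibre sequence
\[
F^{(n+1)}(U) \longrightarrow F(U) \xlongrightarrow{\rho} \tau_n F(U).
\]
So the natural move is to observe that the hypothesis on $F$ forces the second map here to be a weak equivalence, and then to conclude that the homotopy fibre is contractible.

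More precisely, first I would unfold the hypothesis: by Definition~\ref{definition: polynomial functor}, $F$ being $n$-polynomial means exactly that the canonical map $\rho : F(V) \to \tau_n F(V)$ is a weak homotopy equivalence for every $V \in \J_0$. Next I would plug this into the homotopy fibre sequence above: since the right-hand map is a levelwise weak equivalence, its homotopy fibre is levelwise weakly contractible. Hence $F^{(n+1)}(U) \simeq \ast$ for every $U \in \J_0$, i.e.\ $F^{(n+1)}$ is trivial in the sense of being levelwise weakly contractible.

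There is essentially no obstacle here; the corollary is a one-line consequence of Proposition~C combined with the definition of $n$-polynomial. The only small care needed is to note that Proposition~C is stated as a genuine homotopy fibre sequence (so that a weak equivalence on the base really does force the fibre to be contractible), and to be clear that ``trivial'' is being used in the homotopical sense (levelwise weakly contractible) rather than strictly zero.
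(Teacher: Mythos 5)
Your argument is correct and is exactly the one the paper intends: the corollary is stated without proof precisely because it follows immediately from Proposition~\ref{prop: hofib of n-poly map} once one notes that $n$-polynomiality makes $\rho: F(U) \to \tau_n F(U)$ a weak equivalence, so the homotopy fibre $F^{(n+1)}(U)$ is levelwise weakly contractible. Your added remarks about reading ``trivial'' homotopically are appropriate and consistent with the paper's usage.
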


\begin{rem}
The condition of Corollary \ref{cor: n+1 derivative of n-poly} is necessary for a functor $F$ to be $n$-polynomial, but not sufficient. Consider the functor $F \in \E_0$ defined by
\[
F(U) = 
\begin{cases}
S^0 \ \text{whenever} \ \dim U > 5 \\
\ast \ \text{otherwise}.
\end{cases}
\]
and for $f \in \J_0(U,V)$, $F(f)$ is the identity. The first derivative of $F$ at $U$ is given by 
\[
F^{(1)}(U) = \hofibre[F(U) \longrightarrow F(U \oplus \C)]
\]
which is always contractible. So the first derivative of $F$ vanishes. However  for $f: \C^5 \longrightarrow \C^6$, $F(f)$ is not a weak homotopy equivalence as $\ast \not\simeq S^0$, hence $F$ is not $0$-polynomial. 
\end{rem}

A useful result for showing an object is $n$-polynomial is the following result relating $n$-polynomial objects and their homotopy fibre. The proof of which is an application of the Five Lemma.

\begin{lem}\label{hofibre lemma for polynomials} \label{hofibre of n-poly is n-poly} 
Let $n$ be a non-negative integer, and let $E \in \E_0$ be an $n$-polynomial, $g: E \longrightarrow F$ a morphism in $\E_0$ and suppose that the $(n+1)$-st derivative of $F$ is levelwise weakly contractible. Then the functor given by 
\begin{equation*}
U \longmapsto \hofibre[E(U) \xrightarrow{g_V} F(U)],
\end{equation*}
is an $n$-polynomial.
\end{lem}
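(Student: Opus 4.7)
The plan is to set $H(U) := \hofibre[E(U) \xrightarrow{g_U} F(U)]$ and show directly that the canonical map $\rho : H(U) \longrightarrow \tau_n H(U)$ is a weak homotopy equivalence for every $U \in \J_0$. By definition, this is exactly what it means for $H$ to be $n$-polynomial.

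The first step is to exploit that $\tau_n$ is a (finite) homotopy limit, so it preserves homotopy fibre sequences. Applying $\tau_n$ levelwise to the fibre sequence $H \longrightarrow E \xrightarrow{g} F$ and comparing with the unlocalised sequence yields a commuting ladder
\[
\xymatrix{
H(U) \ar[r] \ar[d]_{\rho_H} & E(U) \ar[r]^{g_U} \ar[d]_{\rho_E} & F(U) \ar[d]^{\rho_F} \\
\tau_n H(U) \ar[r] & \tau_n E(U) \ar[r]_{\tau_n g_U} & \tau_n F(U)
}
\]
in which both rows are homotopy fibre sequences.

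Next I would identify the two outer vertical maps as weak equivalences. Since $E$ is assumed $n$-polynomial, $\rho_E$ is a levelwise weak equivalence by Definition \ref{definition: polynomial functor}. For $F$, Proposition \ref{prop: hofib of n-poly map} supplies a homotopy fibre sequence
\[
F^{(n+1)}(U) \longrightarrow F(U) \xlongrightarrow{\rho_F} \tau_n F(U),
\]
and by hypothesis $F^{(n+1)}$ is levelwise weakly contractible, so the long exact sequence of homotopy groups forces $\rho_F$ to be a weak homotopy equivalence as well.

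Finally, applying the Five Lemma to the long exact sequences of homotopy groups induced by the two horizontal fibre sequences (with $\rho_E$ and $\rho_F$ inducing isomorphisms on all $\pi_\ast$) shows that $\rho_H$ induces an isomorphism on all homotopy groups, hence is a weak homotopy equivalence. This holds for every $U \in \J_0$, so $H$ is $n$-polynomial. The only mildly delicate point is the basepoint/connectivity bookkeeping needed to invoke the Five Lemma on $\pi_0$, but this is standard since we are working with pointed spaces and homotopy fibres come equipped with canonical basepoints making the sequences exact as pointed sets at the relevant end.
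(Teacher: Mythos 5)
Your overall strategy --- apply $\tau_n$ levelwise to the fibre sequence, compare the two horizontal fibre sequences, and run the Five Lemma --- is exactly what the paper has in mind (the paper offers no more detail than ``an application of the Five Lemma''). However, there is a genuine error in the step where you treat the hypothesis on $F$: contractibility of the homotopy fibre $F^{(n+1)}(U)$ of $\rho_F \colon F(U) \to \tau_n F(U)$ does \emph{not} force $\rho_F$ to be a weak homotopy equivalence. The long exact sequence only gives that $\pi_k(\rho_F)$ is an isomorphism for $k \geq 1$ (at basepoints lying in the basepoint component) and that the preimage of the basepoint component of $\tau_n F(U)$ is exactly the basepoint component of $F(U)$; it gives no surjectivity on $\pi_0$. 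Indeed, if ``$F^{(n+1)}$ levelwise contractible'' implied ``$\rho_F$ a levelwise weak equivalence'', then vanishing of the $(n+1)$-st derivative would imply $n$-polynomiality, and the Remark following Corollary \ref{cor: n+1 derivative of n-poly} gives a counterexample: for the functor with value $S^0$ in dimensions $>5$ and $\ast$ otherwise, the first derivative is levelwise contractible, yet $\rho \colon F(\C^5) = \ast \to \tau_0 F(\C^5) \simeq S^0$ fails to be $\pi_0$-surjective. So, as written, your justification of the right-hand vertical map being an equivalence is false under the stated hypotheses.

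The repair is that the Five Lemma never needs $\rho_F$ to be a weak equivalence. In the five-term stretch $\pi_{k+1}E \to \pi_{k+1}F \to \pi_k H \to \pi_k E \to \pi_k F$ compared with its $\tau_n$-analogue, the $F$-columns sit in the positions where one only needs $\pi_{k+1}(\rho_F)$ to be an isomorphism and $\pi_k(\rho_F)$ to be injective (to have trivial kernel at the $\pi_0$-end), and both of these do follow from contractibility of the homotopy fibre of $\rho_F$; moreover, since $H(U)$ and $\tau_n H(U)$ only interact with the basepoint components of $F(U)$ and $\tau_n F(U)$, every basepoint occurring in the comparison lies in a component over which the homotopy fibre of $\rho_F$ is still contractible. (Equivalently: $\rho_F$ restricted to basepoint components is a weak equivalence, and the horizontal homotopy fibres only see those components.) With the claim weakened in this way, and the $\pi_0$/basepoint bookkeeping you already flag carried out over all components of $H(U)$, your argument becomes a complete proof and agrees with the paper's intended one.
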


We achieve the following corollary.

\begin{cor}\label{cor: delooping of n-poly}
Let $n$ be a non-negative integer and let $F \in \E_0$. If $F^{(n+1)}$ is trivial, then the functor given by $U \longmapsto \Omega F(U)$ is $n$-polynomial.
\end{cor}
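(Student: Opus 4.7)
The plan is to apply Lemma \ref{hofibre lemma for polynomials} directly, after recognising the loop space as a homotopy fibre of a map whose source is automatically polynomial of every degree.

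First I would set $E$ to be the constant functor at a point, $E(U) = \ast$. Since constant functors are homotopically constant, they are $0$-polynomial, and by iterating Proposition \ref{n poly is (n+1) poly} they are $n$-polynomial for every $n \geq 0$. Let $g : E \longrightarrow F$ be the unique map induced by the basepoint of $F$; this is a morphism in $\E_0$.

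Next I would rewrite the loop functor as a homotopy fibre: for each $U \in \J_0$,
\[
\Omega F(U) \simeq \hofibre[\ast \xrightarrow{g_U} F(U)] = \hofibre[E(U) \xrightarrow{g_U} F(U)].
\]
By hypothesis, $F^{(n+1)}$ is levelwise weakly contractible, so the two conditions of Lemma \ref{hofibre lemma for polynomials} are satisfied: $E$ is $n$-polynomial and the $(n+1)$-st derivative of $F$ is trivial. The lemma then yields that $U \longmapsto \hofibre[E(U) \to F(U)]$ is $n$-polynomial, and this is exactly $\Omega F$.

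There is no real obstacle here; the corollary is a direct specialisation of the lemma to the case where the source of the map is a constant functor. The only thing worth verifying is that the constant functor $\ast$ genuinely lives in $\E_0$ as a $\T$-enriched functor, which is immediate, and that the natural map $\ast \to F$ used to form the loop space is the one giving $\Omega F(U) = \hofibre[\ast \to F(U)]$ under the standard identification.
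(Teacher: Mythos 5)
Your proof is correct and is precisely the route the paper intends: the corollary is stated as an immediate consequence of Lemma \ref{hofibre lemma for polynomials}, obtained by taking $E = \ast$ (which is $0$-polynomial, hence $n$-polynomial) and identifying $\Omega F(U)$ with $\hofibre[\ast \to F(U)]$. Nothing is missing.
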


An important example from orthogonal calculus \cite[Example 5.7]{We95} is that the functor given by 
\[
V \longmapsto \Omega^\infty[(S^{\R^n \otimes V} \wedge \Theta)_{hO(n)}],
\] 
is $n$-polynomial, where $\Theta$ is a spectrum with an action of $O(n)$. This result hold true for unitary calculus, with a completely analogous proof. 

\begin{ex}\label{example: infinite loop polynomial}
Let $n$ be a non-negative integer. If $\Theta$ is a spectrum with an action of $\U(n)$, then the functor $F$ given by
\[
U \longmapsto \Omega^\infty[(S^{\C^n \otimes U} \wedge \Theta)_{h\U(n)}],
\]
is $n$-polynomial. 
\end{ex}
\begin{proof}
Since $F$ has a delooping, by Corollary \ref{cor: delooping of n-poly}, it suffices to show that $F^{(n+1)}$ is levelwise weakly contractible. From Proposition \ref{derivatives stable}, the $(n+1)$-th derivative is the homotopy fibre of 
\[
F^{(n)}(U) \longrightarrow \Omega^{2n}F^{(n)}(U \oplus \C).
\]
Iterating this process gives a sequence of derivatives in $\E_0$
\[
F^{(n)} \longrightarrow F^{(n-1)} \longrightarrow \dots \longrightarrow F^{(i)} \longrightarrow \dots \longrightarrow F^{(1)} \longrightarrow F.
\]
We follow Weiss \cite{We95}, and aim to identify this sequence with a sequence
\[
 F[n] \longrightarrow F[n-1] \longrightarrow \dots \longrightarrow F[i] \longrightarrow \dots \longrightarrow F[1] \longrightarrow F,
\]
where $F[i](U) = \Omega^\infty[(S^{nU} \wedge \Theta)_{h\U(n-i)}]$, and $\U(n-i)$ fixes the first $i$ coordinates. 

Each $F[i]$ comes with canonical inclusion maps
\[
\begin{split}
S^{iU} \wedge \Omega^\infty[(S^{nV} \wedge \Theta)_{h\U(n-i)}] &\hookrightarrow \Omega^\infty[S^{iU} \wedge (S^{nV} \wedge \Theta)_{h\U(n-i)}] \\
													&=  \Omega^\infty[(S^{iU} \wedge S^{nV} \wedge \Theta)_{h\U(n-i)}] \\
												&\hookrightarrow\Omega^\infty[(S^{nU} \wedge S^{nV} \wedge \Theta)_{h\U(n-i)}] \\
												&\rightarrow \Omega^\infty[(S^{n(U \oplus V)} \wedge \Theta)_{h\U(n-i)}] \\
\end{split}
\]
where the equality holds since $\U(n-i)$ fixes the first $i$-coordinates, hence fixes $\C^i$. This series of inclusions defines a structure map $\sigma : S^{iU} \wedge F[i](V) \longrightarrow F[i](U \oplus V)$, hence $F[i] \in \E_i$. 

Moreover $F[n]$ is an $n\Omega$-spectrum (see Definition \ref{def: n-omega spectra}) in $\E_n$, compare \cite[Example 2.3]{We95}. We show that $F[i+1]$ is levelwise weakly equivalent to $F[i]^{(1)}$, and since $F[n]$ is an $n\Omega$-spectrum, $F[n]^{(1)}$ will vanish, and hence so too will the $(n+1)$-st derivative of $F$. We inductively calculate $F[i]^{(1)}$.

\[
\begin{split}
F[i]^{(1)}(U) &= \hofibre \left[ F[i](U) \longrightarrow \Omega^{2i}F[i](\C \oplus U)  \right] \\
		    &=  \hofibre \left[ \Omega^\infty[(S^{nU} \wedge \Theta)_{h\U(n-i)}]  \longrightarrow \Omega^{2i}\Omega^\infty[(S^{n(\C \oplus U)} \wedge \Theta)_{h\U(n-i)}] \right] \\
		    &= \Omega^\infty \hofibre\left[  (S^{nU} \wedge \Theta)_{h\U(n-i)}  \longrightarrow \Omega^{2i}(S^{n(\C \oplus U)} \wedge \Theta)_{h\U(n-i)}\right] \\
	     &\simeq \Omega^\infty \hofibre\left[  (S^{nU} \wedge \Theta)_{h\U(n-i)}  \longrightarrow \Omega^{2i}(S^{2n} \wedge S^{nU} \wedge \Theta)_{h\U(n-i)}\right] \\
	     &\simeq \Omega^\infty \hofibre\left[  (S^{nU} \wedge \Theta)_{h\U(n-i)}  \longrightarrow \Omega^{2i}(S^{2i} \wedge S^{2(n-i)} \wedge S^{nU} \wedge \Theta)_{h\U(n-i)}\right] \\
	     &\simeq \Omega^\infty \hofibre\left[  (S^{nU} \wedge \Theta)_{h\U(n-i)}  \longrightarrow \Omega^{2i}\Sigma^{2i}(S^{2(n-i)} \wedge S^{nU} \wedge \Theta)_{h\U(n-i)}\right] \\
	     &\simeq \Omega^\infty \hofibre\left[  (S^{nU} \wedge \Theta)_{h\U(n-i)}  \longrightarrow (S^{2(n-i)} \wedge S^{nU} \wedge \Theta)_{h\U(n-i)}\right] \\.
\end{split}
\]
Now consider the map, $S^{nU} \wedge \Theta  \longrightarrow S^{2(n-i)} \wedge S^{nU} \wedge \Theta$. This map is 
\[
S^0 \wedge S^{nU} \wedge \Theta \xrightarrow{\iota \wedge \id \wedge \id} S^{2(n-i)} \wedge S^{nU} \wedge \Theta,
\]
where $\iota : S^0 \longrightarrow S^{2(n-i)}$ is the canonical inclusion. The map $\iota$ has stable homotopy fibre $S^{2(n-i)-1}$. Hence $\iota \wedge \id \wedge \id$ has homotopy fibre $S^{2(n-i)-1} \wedge S^{nU} \wedge \Theta$, where $\U(n-i)$ acts on $S^{2(n-i)-1}$ by identifying $S^{2(n-i)-1}$ with the unit sphere $S(\C^{n-i})$ of $\C^{n-i}$. Taking homotopy orbits commutes with homotopy fibres, so the map
\[
(S^{nU} \wedge \Theta)_{h\U(n-i)}  \longrightarrow (S^{2(n-i)} \wedge S^{nU} \wedge \Theta)_{h\U(n-i)}
\]
has homotopy fibre, 
\[
(S^{2(n-i)-1} \wedge S^{nU} \wedge \Theta)_{h\U(n-i)}.
\]
It follows that 
\[
F[i]^{(1)}(U) \simeq \Omega^\infty[(S^{2(n-i)-1} \wedge S^{nU} \wedge \Theta)_{h\U(n-i)}] \simeq \Omega^\infty[(S^{nU} \wedge \Theta)_{h\U(n-i-1)}]= F[i+1](U).
\]

The last weak equivalence follows since we may identify $S^{2n-1}$ with $\U(n)/\U(n-1)$, and
\[
\begin{split}
(S(\C^{n})_+ \wedge X)_{h\U(n)} &= E\U(n)_+ \wedge_{\U(n)}( \U(n)/\U(n-1)_+ \wedge X) = E\U(n)_+ \wedge_{\U(n-1)} X \\
&\simeq E\U(n-1)_+ \wedge_{\U(n-1)} X \simeq X_{h\U(n-1)}.\\
\end{split}
\]
\end{proof}

\subsection{The $n$-homogeneous model structure} The $n$-th derivative allows for us to equip $\E_0$ with a model structure which captures the homotopy theory of both polynomial and homogeneous functors of degree less than or equal $n$. This model structure is a right Bousfield localisation (or a cellularization) of the $n$-polynomial model structure.

\begin{prop} \label{n-homogeneous model structure}
Let $n$ be a non-negative integer. There is a topological model structure on $\E_0$ where the weak equivalences are those maps $f$ such that $\ind_0^nT_n f$ is a weak equivalence in $\E_0$, the fibrations are the fibrations of the $n$-polynomial model structure and the cofibrations are those maps with the left lifting property with respect to the acyclic fibrations. The fibrant objects are $n$-polynomial and the cofibrant-fibrant objects are the projectively cofibrant $n$-homogeneous functors.
\end{prop}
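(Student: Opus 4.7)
The plan is to construct the claimed model structure as the right Bousfield localisation (cellularisation) of the $n$-polynomial model structure of Proposition \ref{prop: n-poly model structure} at the set of cells
\[
K_n := \{\,\J_n(V,-) \,:\, V \in \J\,\},
\]
where each $\J_n(V,-)$ is regarded as an object of $\E_0$ via the restriction functor $\res_0^n$. Since the $n$-polynomial model structure is cellular, proper and topological, and since the remark following Theorem \ref{hocolim sphere homeo} ensures that the sphere bundles $S\gamma_n(V,-)_+$ are projectively cofibrant, Hirschhorn's cellularisation theorem \cite[Theorem 5.1.1]{Hi03} applies and yields a topological model structure on $\E_0$ whose fibrations are exactly the fibrations of the $n$-polynomial structure and whose cofibrations are characterised by the stated lifting property.

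To identify the $K_n$-colocal equivalences with the maps declared to be weak equivalences, recall that $T_n$ is a fibrant replacement in the $n$-polynomial model structure. A map $f : E \to F$ is $K_n$-colocal if and only if, for every $V \in \J$,
\[
\map_{\E_0}\!\bigl(\J_n(V,-), T_n E\bigr) \longrightarrow \map_{\E_0}\!\bigl(\J_n(V,-), T_n F\bigr)
\]
is a weak equivalence of spaces. The $(\res_0^n, \ind_0^n)$-adjunction together with the enriched Yoneda lemma provides a natural homeomorphism $\map_{\E_0}(\J_n(V,-), G) \cong (\ind_0^n G)(V)$, so the class of $K_n$-colocal equivalences coincides with the class of maps $f$ for which $\ind_0^n T_n f$ is a levelwise weak equivalence.

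Fibrant objects are unchanged by cellularisation, so the fibrant objects are precisely the $n$-polynomial functors. A cofibrant-fibrant $F$ is then $n$-polynomial and $K_n$-cellular, and in particular projectively cofibrant. It remains to match $K_n$-cellularity on $n$-polynomial projectively cofibrant functors with $n$-reducedness. For the forward direction, the cofibre sequence
\[
\J_n(U \oplus \C, -) \wedge S^{2n} \longrightarrow \J_n(U, -) \longrightarrow \J_{n+1}(U, -)
\]
extracted from the proof of Proposition \ref{derivatives stable} can be used inductively to show that each cell has trivial $(n-1)$-st polynomial approximation, and this vanishing propagates to every $K_n$-cell complex because $T_{n-1}$ commutes with the filtered homotopy colimits and pushouts along projective cofibrations used to assemble cellular objects. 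For the converse, I would construct, for each projectively cofibrant $n$-homogeneous $F$, a $K_n$-cellular approximation $\tilde F \to F$ via the small-object argument against $K_n$; it is a $K_n$-colocal equivalence by construction, and one can then leverage the $n$-homogeneity of both source and target to promote it to a levelwise weak equivalence.

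The step I anticipate to be the main obstacle is this two-sided identification of projectively cofibrant $n$-homogeneous functors with $K_n$-cell complexes, notably the claim that $T_{n-1}$ vanishes on every $K_n$-cell complex (which requires a careful analysis of how $T_{n-1}$ interacts with pushouts along projective cofibrations in a cell presentation) and the verification that cellular approximation of an $n$-homogeneous functor recovers it up to levelwise weak equivalence rather than merely up to $T_n$-equivalence. Once this identification is in hand, the remaining assertions of the proposition are formal consequences of Hirschhorn's cellularisation theorem.
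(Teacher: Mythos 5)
Your construction is exactly the paper's: its proof of this proposition is a one-line right Bousfield localisation of $n\poly\E_0$ at $\mathcal{K}_n = \{\mathrm{Th}(\gamma_n(U,-)) : U \in \J\}$, and since $\J_n(U,V)$ is by definition the Thom space of $\gamma_n(U,V)$, your cell set $K_n$ coincides with $\mathcal{K}_n$; your identification $\map_{\E_0}(\J_n(V,-),G) \cong (\ind_0^n G)(V)$ via the Yoneda lemma is likewise precisely how the colocal equivalences match the stated $\ind_0^nT_n$-equivalences (using that $T_n$ is fibrant replacement in $n\poly\E_0$ and that $\J_n(V,-)$ is projectively cofibrant, as the mapping cone of $S\gamma_{n-1}(V,-)_+ \to \J_0(V,-)$). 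The only divergence is the step you flag as the main obstacle: the paper does not carry out the identification of cofibrant-fibrant objects inside this proof at all, but obtains it later from the classification theorem (Theorem \ref{characterisation of homogeneous functors}) through the characterisations of the $n$-homogeneous acyclic fibrations and cofibrations (Proposition \ref{characterisation of acyclic fibs}, Lemma \ref{lem: cofibrations of n-homog}, Corollary \ref{cofibrant objects of n-homog}), which show the cofibrations are exactly the projective cofibrations that are $(n-1)$-polynomial equivalences and hence the cofibrant objects are the $n$-reduced projectively cofibrant functors. So rather than analysing $T_{n-1}$ of $K_n$-cell complexes and cellular approximations directly, you can defer that identification to those later structural results, as the paper (following Barnes--Oman) does.
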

\begin{proof}
Right Bousfield localising $n\poly\E_0$ at the set of objects,
\[
\mathcal{K}_n = \{\mathrm{Th}(\gamma_n(U,-)) \ : \ U \in \J \},
\]
we achieve the stated model structure.
\end{proof}

We call this the $n$-homogeneous model structure and denote this model structure by $n\homog\E_0$. This is the unitary version of the model structure given by Barnes and Oman \cite[Proposition 6.9]{BO13}. We further characterise the $n$-homogeneous model structure in Section \ref{section: classification of homogeneous functors} once we have a classification of the $n$-homogeneous functors.

\section{An intermediate category}\label{section: intermediate category}

In \cite{We95}, Weiss gives a (zig-zag) equivalence up to homotopy between the categories of $n$-homogeneous functors and orthogonal spectra with an action of $O(n)$. Barnes and Oman give a more concrete description of this equivalence in \cite{BO13} via the use of model categories. They construct an intermediate model category which is the natural home for the $n$-th derivatives of orthogonal functors. They proceed to show that the intermediate category is both Quillen equivalent to the category of orthogonal spectra with an action of $O(n)$, and  the $n$-homogeneous model structure. Their use of model structures formalises the ``up to homotopy'' approach of Weiss. 

We start by giving the definition of the unitary intermediate category, $\U(n)\E_n$, and extending the restriction-induction adjunction into an adjunction between these intermediate categories. From this we construct Quillen equivalences analogous to those constructed by Barnes and Oman.

The category $\J_n$ is $\U(n)\T$-enriched via the induced action of $\U(n)$ on $\J_n(U,V)$ by the regular representation action of $\U(n)$ on $\C^n$. 

\begin{definition}
Let $n$ be a non-negative integer. The \textit{intermediate category} $U(n)\E_n$ is the category of $\U(n)\T$-enriched functors from $\J_n$ to $\U(n)\T$. 
\end{definition}

Define $n\mathbb{S} : \J_n \longrightarrow \T$ to be the functor given by $U \longmapsto S^{nU}$. Note that $n\mathbb{S}(U) = \J_n(0,U)$. Using the standard Day convolution product, one can verify that $n\mathbb{S}$ is a commutative monoid in $\I$-spaces, where $\I$ is the category of finite-dimensional complex inner product spaces with linear isometric isomorphisms. Moreover, this multiplication is $\U(n)$-equivariant and hence $n\mathbb{S}$ is also a commutative monoid in $\U(n)\I\T$, the category of $\U(n)$-equivariant $\I$-spaces.

\begin{prop}
Let $n$ be a non-negative integer. The category $\E_n$ is equivalent to the category of $n\mathbb{S}$-modules in $\I$-spaces, and the category $\U(n)\E_n$ is equivalent to the category of $n\mathbb{S}$-modules in $\U(n)$-equivariant $\I$-spaces. 
\end{prop}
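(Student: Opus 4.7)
The strategy is to set up an explicit equivalence between the two categories by exhibiting $\J_n$ as the $\I$-space category obtained from $\I$ by formally adjoining an $n\mathbb{S}$-module structure. The key structural observation is that there is a zero-section inclusion $\iota : \I \hookrightarrow \J_n$ sending $f \mapsto (f, 0)$, together with natural action maps
\[
n\mathbb{S}(W) \wedge \J_n(U, V) \longrightarrow \J_n(U, W \oplus V)
\]
obtained from the fact that for $(f, x) \in \J_n(U, V)$ and $y \in S^{nW}$, the pair $(\mathrm{id}_W \oplus f, \, y \oplus x)$ defines a point in $\J_n(U, W \oplus V)$. Together, these exhibit $\J_n$ as the ``free'' $n\mathbb{S}$-module enrichment of $\I$.

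Given an $n\mathbb{S}$-module $M$ in $\I$-spaces, I would define $\Phi(M) \in \E_n$ by $\Phi(M)(V) = M(V)$ on objects, with the $\J_n$-action reconstructed as follows: any element $(f, x) \in \J_n(U,V)$, with $x \in \C^n \otimes f(U)^\perp$, acts on $m \in M(U)$ as the $\I$-action $f \cdot m$ modified by the $n\mathbb{S}$-module structure applied to $x$ (using the splitting $V = f(U) \oplus f(U)^\perp$ to interpret $x$ as an element of $n\mathbb{S}(f(U)^\perp)$). Conversely, I would define $\Psi(F) = F \circ \iota$, which is an $\I$-space naturally equipped with an $n\mathbb{S}$-module structure via the structural maps above. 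That $\Psi \Phi \cong \id$ is manifest on objects, and $\Phi \Psi \cong \id$ reduces to the fact that the $\J_n$-action on any $F$ is determined by its restriction along $\iota$ together with the induced $n\mathbb{S}$-action.

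The main obstacle will be verifying that $\Phi$ and $\Psi$ are genuine functors of enriched categories, i.e.\ that composition in $\J_n$ is faithfully encoded by $\I$-composition together with $n\mathbb{S}$-multiplication and the $\I$-action on $n\mathbb{S}$. This reduces to a direct bookkeeping exercise with the bundle $\gamma_n$ and the canonical identifications $\C^n \otimes (V \oplus W) \cong (\C^n \otimes V) \oplus (\C^n \otimes W)$. For the $\U(n)$-equivariant statement, every step of the construction is visibly $\U(n)$-equivariant: the $\U(n)$-action on $\J_n$ comes from the regular representation on $\C^n$, preserves the zero-section inclusion $\iota$, and is compatible with the action maps displayed above, so the non-equivariant equivalence restricts to the equivariant categories without modification.
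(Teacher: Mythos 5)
Your proposal is correct in substance but takes a more hands-on route than the paper. The paper simply invokes \cite{MMSS01}: modules over the monoid $n\mathbb{S}$ in $\I$-spaces form a diagram category indexed on a category $\mathscr{J}_n$ whose morphism spaces are the enriched coends $\int^{U \in \I} \I(U \oplus V, W) \wedge S^{nU}$, and the whole proof then reduces to one explicit computation: the map $(f,u) \longmapsto (f|_V, (\C^n \otimes f)(u))$ is a $\U(n)$-equivariant isomorphism onto the Thom space $\J_n(V,W)$. You instead build the equivalence by hand via $\Phi$ and $\Psi$, which amounts to re-proving the MMSS structure result in this special case: every point $(f,x)$ of $\J_n(U,V)$ factors as an $\I$-isomorphism $f(U)^\perp \oplus U \cong V$ applied after the sphere point $x \in S^{n f(U)^\perp}$, essentially uniquely up to the coend relations. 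What your route buys is self-containedness; what it costs is that the ``direct bookkeeping'' you defer is genuinely where the content lies -- you must check that the reconstructed $\J_n$-action in $\Phi(M)$ is independent of the implicit choice of splitting, continuous for the Thom-space topology (the splitting $V = f(U)\oplus f(U)^\perp$ does vary continuously in $f$, but this needs saying), and compatible with composition in $\J_n$; these are exactly the coend relations, and the cleanest way to organize them is the paper's single isomorphism of morphism spaces. Two small corrections: your action map should send $\bigl(y,(f,x)\bigr)$ to the composite of $(f,x)$ with $(i_V, y) \in \J_n(V, W \oplus V)$, i.e.\ the isometry is $U \xrightarrow{f} V \hookrightarrow W \oplus V$ with vector part $y + (\C^n \otimes i_V)(x)$, not ``$\mathrm{id}_W \oplus f$'' (which has the wrong source); and the equivariant statement needs the diagonal $\U(n)$-action on $S^{nW} \wedge M(V)$ matched against the $\U(n)$-action on the Thom spaces through $\C^n$, which your construction does respect but which is worth recording rather than calling ``visible''.
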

\begin{proof}
By \cite{MMSS01}, the category of $n\mathbb{S}$-modules is a diagram category indexed on $\mathscr{J}_n$, where $\mathscr{J}_n$ is the category with the same objects as $\J_n$ and morphism spaces given by the enriched coend
\[
\mathscr{J}_n(V,W) \cong \int^{U \in \I} \I(U \oplus V, W) \wedge S^{nU}.
\]
It is then straightforward to check that the map specified by 
\[
\I(U \oplus V, W) \wedge S^{nU} \longrightarrow \J_n(V,W) ,\ \ (f, u) \longmapsto (f|_V, (\C^n \otimes f)(u)),
\]
is a suitably $\U(n)$-equivariant isomorphism. 
\end{proof}

\subsection{The $n$-stable model structure} This description as a category of modules allows for a stable model structure to be placed on $\U(n)\E_n$ similar to the stable model structure on spectra. For this we define the weak equivalences and fibrant objects. The constructions are as in \cite{MMSS01}. The model structure is a left Bousfield localisation of the projective model structure given below. 

\begin{lem}
Let $n$ be a non-negative integer. There is a cellular, proper, topological model structure on the category $\U(n)\E_n$ with weak equivalences and fibrations defined as levelwise weak homotopy equivalences and Serre fibrations respectively. The generating (acyclic) cofibrations are of the form
\[
\J_n(U,-) \wedge U(n)_+ \wedge i,
\]
for $U \in \J_n$ and $i$ a generating (acyclic) cofibration.
\end{lem}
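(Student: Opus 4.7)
The approach is to recognize $\U(n)\E_n$ as a diagram category of the type handled in Mandell--May--Schwede--Shipley \cite{MMSS01} and then invoke their Theorem 6.5 exactly as the paper already did for $\E_0$. The only subtlety beyond the $\E_0$ case is that both the enrichment category and the target are $\U(n)\T$ rather than $\T$.

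First I would equip $\U(n)\T$ itself with the ``coarse'' or ``free'' model structure in which a map is a weak equivalence (resp.\ fibration) exactly when its underlying map of based spaces is a weak homotopy equivalence (resp.\ Serre fibration). This is obtained by transferring the Quillen model structure on $\T$ along the free--forgetful adjunction $\U(n)_+ \wedge (-) \dashv \mathrm{forget}$; the transfer succeeds by Kan's lifting theorem (smallness of domains, path objects from $\T$) and the resulting model structure is cellular, proper and topological. Its generating cofibrations are $\{\U(n)_+ \wedge i : i \in I\}$ and its generating acyclic cofibrations are $\{\U(n)_+ \wedge j : j \in J\}$.

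Second, I would observe that since $\J_n$ is a small $\U(n)\T$-enriched category, the category $\U(n)\E_n$ of enriched functors $\J_n \longrightarrow \U(n)\T$ is a diagram category in the sense of \cite{MMSS01}. The MMSS projective model structure then gives the desired levelwise weak equivalences and fibrations, with generating (acyclic) cofibrations produced by applying the enriched free functors $\J_n(U,-) \wedge (-)$ to the generators of $\U(n)\T$ just identified; composing these two free functors yields precisely the sets $\{\J_n(U,-) \wedge \U(n)_+ \wedge i\}$ and $\{\J_n(U,-) \wedge \U(n)_+ \wedge j\}$ claimed in the statement. The small object argument applies since the domains of $I$ and $J$ are compact in $\T$ and smashing with $\U(n)_+ \wedge \J_n(U,-)$ preserves this compactness.

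Third, I would verify the remaining structural adjectives. Left properness is immediate because weak equivalences are levelwise and pushouts along (levelwise) cofibrations preserve levelwise weak equivalences in $\T$; right properness follows by the dual argument using pullbacks along levelwise Serre fibrations in $\T$. The topological enrichment descends from the simplicial/topological structure of $\U(n)\T$ via the enriched hom, tensor and cotensor with spaces applied levelwise. Cellularity follows from the explicit cellular set of generators: the domains and codomains are compact, cofibrations are effective monomorphisms (levelwise, by reduction to $\T$), and the smallness hypotheses of \cite{Hi03} hold.

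The main obstacle is simply keeping the bookkeeping straight for the $\U(n)$-equivariance: one must check that it is the \emph{coarse} model structure on $\U(n)\T$ (not a genuine equivariant one with fixed-point weak equivalences) that gives generators of the stated form $\J_n(U,-) \wedge \U(n)_+ \wedge i$, and that the $\U(n)\T$-enriched Yoneda lemma correctly identifies the free objects on $\J_n$. Beyond this verification, every step is a direct application of the MMSS machinery already invoked in the proof of the analogous projective model structure on $\E_0$.
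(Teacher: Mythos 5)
Your proposal is correct and follows essentially the same route as the paper, whose proof is simply the observation that this is \cite[Theorem 6.5]{MMSS01} with diagrams valued in $\U(n)\T$ rather than $\T$. Your additional detail — the coarse model structure on $\U(n)\T$ with generators $\U(n)_+ \wedge i$, whence the stated generators $\J_n(U,-)\wedge \U(n)_+\wedge i$ — is exactly the implicit content of that citation, spelled out.
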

\begin{proof}
This is essentially \cite[Theorem 6.5]{MMSS01} but the diagrams are in $U(n)\T$ rather than $\T$. 
\end{proof}

The weak equivalences are an alternation of the $\pi_*$-isomorphisms of spectra to take into account the structure maps $S^{2} \wedge X_k \longrightarrow X_{k+1}$. 

\begin{definition}
For $X \in \U(n)\E_n$ define the \textit{$n$-homotopy groups} of $X$ as
\[
n\pi_k X = \underset{q}{\colim}~ \pi_{2nq +k} X(\C^q).
\]
for all $k \in \Z$. A map $f: X \longrightarrow Y$ in $U(n)\E_n$ is a $n\pi_*$-isomorphism if it induces an isomorphism on all $n$-homotopy groups. 
\end{definition}

The fibrant objects will be similar to $\Omega$-spectra but take into account the structure maps of  $n\mathbb{S}$-modules.

\begin{definition}\label{def: n-omega spectra}
Let $n$ be a non-negative integer. An object $X \in \U(n)\E_n$ is an \textit{$n\Omega$-spectrum} if the adjoint structure maps 
\[
X(V) \longrightarrow \Omega^{nW}X(V \oplus W),
\] 
are weak homotopy equivalences for all $V,W \in \J_n$. 
\end{definition}

Denote the restricted composition map by
\[
\lambda_{U,V}^n : \J_n(U \oplus V, -) \wedge S^{nV} \longrightarrow \J_n(U,-).
\]
We factor $\lambda_{U,V}^n$ into a cofibration
\[
k_{U,V}^n : \J_n(U \oplus V, -) \wedge S^{nV} \longrightarrow M \lambda_{U,V}^n,
\]
where $M\lambda_{U,V}^n$ is the mapping cylinder of $\lambda_{U,V}^n$, and a deformation retraction 
\[
r_{U,V}^n :M\lambda_{U,V}^n \longrightarrow \J_n(U,-).
\]
Denote by $k_{U,V}^n \Box i$ the pushout product 
\[
k_{U,V}^n \Box i:  \J_n(U \oplus V, -) \wedge S^{nV} \wedge B) \cup_{\J_n(U \oplus V, -) \wedge S^{nV} \wedge A)} (M\lambda_{U,V}^n \wedge A) \longrightarrow M\lambda_{U,V}^n \wedge B,
\]
of the map $k_{U,V}^n : \J_n(U \oplus V, -) \wedge S^{nV} \longrightarrow M \lambda_{U,V}^n$ with $i: A \longrightarrow B$.

\begin{prop}[{\cite[Theorem 9.2]{MMSS01}}]\label{prop: n-stable model structure}
Let $n$ be a non-negative integer. There is a cofibrantly generated, proper, stable, topological model structure on the category $\U(n)\E_n$, where the weak equivalences are the $n\pi_*$-isomorphisms, the cofibrations are the q-cofibrations  and the fibrations are those maps which satisfy the right lifting property with respect to the acyclic q-cofibrations. The generating cofibrations are those of the projective model structure and the generating acyclic cofibrations are the union of the projective generating acyclic cofibrations together with the set  
\[
K_{V,W}^n \Box I_{\U(n)\T} := \{ k_{V,W}^n \Box i \ : \  i \in I_{\U(n)\T}, \ V, W \in \I\},
\]
where $I_{\U(n)\T}$ is the set of generating cofibrations for the underlying model structure on $\U(n)\T$.
\end{prop}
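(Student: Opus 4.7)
Following the blueprint of \cite[Theorem 9.2]{MMSS01}, the plan is to obtain this $n$-stable model structure as a left Bousfield localisation of the projective level model structure at the collection of maps $\lambda_{U,V}^n : \J_n(U \oplus V,-) \wedge S^{nV} \longrightarrow \J_n(U,-)$, realised concretely through the mapping cylinder factorisation and its pushout product with $I_{\U(n)\T}$. I would organise the argument around three substantive ingredients: identifying the fibrant objects, recognising the weak equivalences as $n\pi_*$-isomorphisms, and verifying the remaining formal properties.

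First, I would check that each $\lambda_{U,V}^n$ is an $n\pi_*$-isomorphism. This is an indexing computation in the colimit $n\pi_k X = \colim_q \pi_{2nq+k} X(\C^q)$: smashing with $S^{nV}$ compensates exactly for the shift introduced by moving from $\J_n(U \oplus V,-)$ to $\J_n(U,-)$, because the $n\mathbb{S}$-module structure maps are smash products with $S^{nV}$ rather than $S^{V}$. Next I would identify the fibrant objects. A map $p : X \to Y$ has the right lifting property against $K_{V,W}^n \Box I_{\U(n)\T}$ if and only if the induced map of mapping spaces out of $k_{V,W}^n$ is an acyclic Serre fibration in $\U(n)\T$. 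Using the enriched Yoneda lemma and the $(S^{nW} \wedge -, \Omega^{nW})$-adjunction, this translates (when $Y=\ast$) precisely to the condition that the adjoint structure maps $X(V) \to \Omega^{nW}X(V\oplus W)$ are weak equivalences, so the fibrant objects are the $n\Omega$-spectra.

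With the fibrant objects identified, I would invoke Kan's recognition theorem to erect the cofibrantly generated model structure: smallness of the domains relative to the generating sets is standard since $\U(n)\T$ is cofibrantly generated, and one small-object factorisation produces functorial fibrant replacement $X \to X^f$ into an $n\Omega$-spectrum. The key point is to show that this localisation has $n\pi_*$-isomorphisms as its weak equivalences. For this, one argues that (i) a map between $n\Omega$-spectra is an $n\pi_*$-isomorphism if and only if it is a levelwise weak equivalence, since the colimit defining $n\pi_*$ stabilises on such objects; and (ii) the unit $X \to X^f$ is an $n\pi_*$-isomorphism, since it is built by iterated cobase changes of the $\lambda_{U,V}^n$, which were shown to be $n\pi_*$-isomorphisms. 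Combining (i) and (ii) with two-out-of-three gives the identification.

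Left properness is automatic: the class of cofibrations is unchanged from the projective model structure, which is already left proper. Right properness reduces to showing that $n\pi_*$-isomorphisms are preserved under pullback along level fibrations, for which the long exact sequence of $n$-homotopy groups attached to a levelwise fibration, together with the five lemma, suffices. Stability follows from showing that $\Sigma$ and $\Omega$ induce inverse equivalences on the homotopy category; this is where the $n\Omega$-spectrum condition pays off, since it forces $S^{n\C}$ to act invertibly on $n$-homotopy groups. The main technical obstacle is the weak-equivalence identification in step three: one must carefully check that the factors of $2n$ in the definition of $n\pi_*$ match the shift produced by the generators $K_{V,W}^n \Box I_{\U(n)\T}$, and that no additional localising maps are needed beyond the $\lambda_{U,V}^n$. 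Once this indexing bookkeeping is done, the MMSS01 argument transfers verbatim.
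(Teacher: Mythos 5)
Your proposal is correct and follows essentially the same route as the paper, which simply invokes \cite[Theorem 9.2]{MMSS01} after setting up the maps $\lambda_{U,V}^n$, their mapping-cylinder factorisations $k_{U,V}^n$, and the pushout-product set $K_{V,W}^n \Box I_{\U(n)\T}$: your outline is precisely the MMSS01 argument (localise the projective level structure at the $\lambda_{U,V}^n$, identify fibrant objects as $n\Omega$-spectra, match the weak equivalences with the $n\pi_*$-isomorphisms, then check properness and stability) transported to the $n$-shifted, $\U(n)$-equivariant setting. No further comment is needed.
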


We call this the $n$-stable model structure. It is homotopically compactly generated by $\U(n)_+ \wedge n\mathbb{S}$.

The derivatives of $n$-polynomial objects are well behaved with respect to the $n$-stable model structure, in that they are $n\Omega$-spectra. Since $n\Omega$-spectra are the fibrant objects in the $n$-stable model structure, the following result indicates that the $n$-stable model structure gives homotopical control over the derivatives. The orthogonal version of this may be found in \cite[Proposition 5.12]{BO13} or \cite[Proposition 5.12]{We95}.

\begin{lem}\label{derivatives of n-poly are stable}
Let $n$ be a non-negative integer. If $E$ is an $n$-polynomial in $\E_0$, then for any $V \in \J_0$, the map 
\[
E^{(n)}(V) \longrightarrow \Omega^{2n} E^{(n)}(V \oplus \C),
\]
is a weak homotopy equivalence. 
\end{lem}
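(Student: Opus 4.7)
The plan is to deduce this directly from Proposition~\ref{derivatives stable} (Proposition~B) together with Corollary~\ref{cor: n+1 derivative of n-poly}. Proposition~B, applied to $F = E^{(n)} = \ind_0^n E \in \E_n$ and to the object $V \in \J_0$, gives a homotopy fibre sequence
\[
\res_n^{n+1}\ind_n^{n+1}E^{(n)}(V) \longrightarrow E^{(n)}(V) \longrightarrow \Omega^{2n}E^{(n)}(V \oplus \C),
\]
so it suffices to prove that the fibre term is weakly contractible for every $V$.

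The key identification is that right Kan extensions compose, i.e.\ $\ind_n^{n+1}\ind_0^n \cong \ind_0^{n+1}$, so the fibre term is (naturally isomorphic to) $\res_n^{n+1} E^{(n+1)}$. Since $E$ is assumed $n$-polynomial, Corollary~\ref{cor: n+1 derivative of n-poly} tells us that $E^{(n+1)}$ is levelwise weakly contractible on $\J_0$, and hence its restriction along $i_n^{n+1}$ to $\J_n$ is likewise levelwise weakly contractible. Feeding this into the fibre sequence shows that the second map is a weak homotopy equivalence for every $V \in \J_0$, which is exactly the desired statement.

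The only point that needs mild care is the composition of induction functors; this is a formal consequence of the universal property of right Kan extensions along the composite $i_0^{n+1} = i_n^{n+1} \circ i_0^n$, and does not require any genuine calculation. I expect no real obstacle here: the work is all packed into Proposition~B (which handled the geometry via the cofibre sequence of sphere bundles) and into Corollary~\ref{cor: n+1 derivative of n-poly}. In that sense the lemma is a clean corollary of the two previous results, rephrased to say that the derivatives of $n$-polynomial functors automatically satisfy the $n\Omega$-spectrum condition of Definition~\ref{def: n-omega spectra} at the stage $W = \C$; the analogous statement for general $W \in \J_n$ follows by iterating this one-dimensional step.
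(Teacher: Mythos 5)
The formal skeleton of your argument is fine and is certainly the intended one: applying Proposition \ref{derivatives stable} to $E^{(n)}=\ind_0^nE\in\E_n$, identifying the fibre term via $\ind_n^{n+1}\ind_0^n\cong\ind_0^{n+1}$ (which the paper records in the form $\ind_0^n=\ind_{n-1}^n\cdots\ind_0^1$), and invoking Corollary \ref{cor: n+1 derivative of n-poly} are all legitimate steps. The gap is in the final inference, ``the fibre term is weakly contractible, hence the second map is a weak homotopy equivalence.'' The homotopy fibre in Proposition \ref{derivatives stable} is the fibre over the canonical basepoint (the constant map), so its contractibility gives, via the long exact sequence, isomorphisms on $\pi_k$ for $k\geq 1$ and an injection on $\pi_0$ only; it gives no surjectivity onto $\pi_0\big(\Omega^{2n}E^{(n)}(V\oplus\C)\big)=\pi_{2n}E^{(n)}(V\oplus\C)$, and no control over the fibres of the restriction fibration lying over the other path components of the target (those fibres are spaces of extensions of non-constant maps and are not identified with $E^{(n+1)}(V)$).

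This is not a pedantic point, because your proof uses the hypothesis that $E$ is $n$-polynomial only through Corollary \ref{cor: n+1 derivative of n-poly}, i.e.\ only through the levelwise contractibility of $E^{(n+1)}$, and that weaker hypothesis does not imply the conclusion. Indeed, the remark immediately following Corollary \ref{cor: n+1 derivative of n-poly} exhibits a functor with values $\ast$ and $S^0$ whose first derivative is levelwise weakly contractible but for which $E(V)\longrightarrow E(V\oplus\C)$ fails to be a weak equivalence (the failure being exactly $\pi_0$-surjectivity); this is the $n=0$ instance of the implication you are invoking, so the same argument would ``prove'' a false statement. Consequently the full strength of $n$-polynomiality has to enter the proof beyond the vanishing of $E^{(n+1)}$, as it does in the orthogonal arguments the paper defers to, \cite[Proposition 5.12]{We95} and \cite[Proposition 5.12]{BO13}; the fibre-sequence step alone cannot close the argument. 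Your closing observation, that the general $n\Omega$-spectrum condition of Definition \ref{def: n-omega spectra} follows by iterating the one-dimensional case $W=\C$, is fine once that case has actually been established.
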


\section{The intermediate category as a category of spectra}\label{section: intermediate category as spectra}

The intermediate category, $\U(n)\E_n$ is Quillen equivalent to spectra with an action of  $\U(n)$. We digress from discussing the Taylor tower to discuss this equivalence. It will allow us to reduce the proof of the equivalence between the intermediate category and the $n$-homogeneous model structure to the consideration of spectra, see Theorem \ref{n-homog and intermediate QE}. We start with a discussion of unitary spectra, which is a version of the diagram spectra of Mandell et al. \cite{MMSS01}. 

\subsection{Unitary spectra} Let $\I$ be the category with the same objects as $\J$ and morphisms the complex linear isometric isomorphisms. The category of unitary spectra is the category of diagram spectra over $\I$, \cite[Definition 1.9]{MMSS01}. That is, a unitary spectrum $X$ is an $\mathbb{S}$-module in the category of $\I$-spaces, where $\mathbb{S}$ is the sphere functor which sends a complex inner product space to its one-point compactification. We will denote the category of unitary spectra as $\s^\mathscr{U}$. 

\begin{rem}\label{rem: def of unitary spectra}
Unitary spectra appear in the literature \cite[\S\S7.2]{Sc12} under the guise of a sequence of spaces $\{X_n\}_{n \in \N}$ such that $\U(n)$ acts on $X_n$, together with structure maps  
\[
X_n \wedge S^2 \longrightarrow X_{n+1},
\]
such that the iterated structure maps 
\[
X_n \wedge S^{2m} \longrightarrow X_{n+m},
\]
are $(\U(n) \times \U(m))$-equivariant. 
\end{rem}

Using the vector bundle construction of spectra, we can think of unitary spectra as the category of $\T$ enriched functors from $\J_1$ to $\T$, see \cite[Remark 2.7]{Sch19}, where Schwede gives an equivalence of categories between orthogonal $G$-spectra defined similar to unitary spectra in Remark \ref{rem: def of unitary spectra} and the definition of orthogonal $G$-spectra used by Mandell and May \cite{MM02}. 

Denote evaluation at $U\in \J_1$ by $\Ev_U : \s^\mathscr{U} \longrightarrow \T$, and its left adjoint by $F_U$. The category $\I$ satisfies all the required properties of a diagram category from \cite{MMSS01}. We can thus talk about diagram spectra indexed on $\I$ or diagram spaces indexed on $\J_1$. These categories are isomorphic by \cite[Theorem 2.2]{MMSS01}.

We now give it a stable model structure.  Similarly to \cite[Definition 8.4]{MMSS01} define a map 
\[
\lambda_{U,V} : F_{U\oplus V}S^V \longrightarrow F_{U}S^0,
\]
to be the map adjoint to the canonical inclusion 
\[
S^V \longrightarrow (F_{U}S^0)(U \oplus V) \simeq \U(U \oplus V)_+ \wedge_{\U(V)} S^W, w \longmapsto e \wedge w,
\]
where $e \in \U(U \oplus V)$ is the identity element, and the equivalence follows from the unitary version of \cite[Example 4.4]{MMSS01}.

\begin{definition}\label{definition of the set K}
Let $M\lambda_{U,V}$ be the mapping cylinder of $\lambda_{U,V}$. Then $\lambda_{U,V}$ factors as the composite of a q-cofibration 
\[
k_{U,V} : F_{U \oplus V}S^V \longrightarrow M\lambda_{U,V},
\]
and a deformation retraction $r_{U,V} : M\lambda_{U,V} \longrightarrow F_{U}S^0$. Let $K_{U,V}\Box I$ be the set of maps of the form $k_{U,V} \Box i$ for $i\in I$. Define $K$ to the union of the generating acyclic cofibrations of the projective model structure 
\[
FJ = \{F_Uj \ : \ j \in J,  \ U \in \I\},
\]
with the sets $K_{U,V} \Box I$.
\end{definition}

The $\pi_*$-isomorphisms for unitary spectra have to take into account the suspension coordinate $S^2$ of the structure maps. Hence for a unitary spectrum $X$ and $k \in \Z$, the $k$-th homotopy group of $X$ is defined as:
\[
\pi_kX = \underset{q}{\colim} \pi_{2q+k} X(\C^q).
\]

\begin{prop}[{\cite[Theorem 9.2]{MMSS01}}]
The category $\s^\mathscr{U}$ of unitary spectra is a cofibrantly generated stable model category with respect to the $\pi_*$-isomorphisms q-fibrations and q-cofibrations. The set of generating q-cofibrations is the set 
\[
FI = \{F_Ui \ : \ i \in I,  \ U \in \I\},
\]
and set of generating acyclic q-cofibrations is the set $K$ defined above.
\end{prop}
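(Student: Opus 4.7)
The plan is to transcribe the argument of \cite[Theorem 9.2]{MMSS01} to the unitary setting, using that the indexing category $\I$ satisfies the formal properties required of a diagram category in \cite{MMSS01} and that complex inner product spaces behave in the same formal way as real ones, with the suspension coordinate replaced by $S^2 = S^{\C}$ throughout. First I would equip $\s^\mathscr{U}$ with the projective level model structure, exactly as in the lemma preceding Proposition \ref{prop: n-stable model structure} (but with trivial group action and with $\J_n$ replaced by $\J_1$), so that weak equivalences and fibrations are defined levelwise. The generating cofibrations are $FI$ and the generating acyclic cofibrations are $FJ$, obtained by applying the free functors $F_U$ to the generating (acyclic) cofibrations of $\T$.

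Next I would produce the stable model structure as a left Bousfield localization of the projective level structure at the maps $\lambda_{U,V} : F_{U \oplus V}S^V \longrightarrow F_U S^0$. To obtain cofibrant generation in a clean form, I would replace each $\lambda_{U,V}$ by the cofibration $k_{U,V}$ given by the mapping cylinder factorisation $\lambda_{U,V} = r_{U,V} \circ k_{U,V}$. Standard enriched model category arguments then show that a map has the right lifting property with respect to $K_{U,V} \Box I$ if and only if it is a level fibration and the square
\[
\xymatrix{
X(U) \ar[r] \ar[d] & \Omega^V X(U \oplus V) \ar[d] \\
Y(U) \ar[r] & \Omega^V Y(U \oplus V)
}
\]
is a homotopy pullback; in particular, a fibrant object is precisely an $\Omega$-spectrum.

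The heart of the proof is then to identify the weak equivalences of the localisation with the $\pi_*$-isomorphisms as defined above. For this I would argue, following \cite[\S 7]{MMSS01}, that (i) a level equivalence is a $\pi_*$-isomorphism, (ii) a map between $\Omega$-spectra is a $\pi_*$-isomorphism if and only if it is a level equivalence, and (iii) every unitary spectrum admits a functorial fibrant replacement that is a $\pi_*$-isomorphism, constructed by iterating the assignment $X \longmapsto \hocolim_q \Omega^{2q} \sh^q X$ where $\sh$ denotes the shift functor. The small object argument applied to $FI$ and $K$ then produces the two factorisation systems, and the lifting axioms follow by combining the characterisation of $K$-injective maps above with the identification of weak equivalences as $\pi_*$-isomorphisms. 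Stability is immediate once one checks that smashing with $S^{\C}$ is a Quillen self-equivalence, which follows because $-\wedge S^{\C}$ on free spectra is (up to level equivalence) the shift, which is a Quillen equivalence on the localised structure.

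The main obstacle, and the only place where the real vs.\ complex difference surfaces, is step (iii) above: reindexing the homotopy colimit defining $\pi_*$ so that the suspension coordinate is $S^2$ rather than $S^1$. Concretely, one must verify that the filtration arguments of \cite[\S 7]{MMSS01} work when the comparison map $X(\C^q) \longrightarrow \Omega^2 X(\C^{q+1})$ suspends by two dimensions at each stage; this is a purely bookkeeping modification using connectivity estimates in the even degrees. Everything else, including properness (inherited from $\T$ levelwise together with the standard gluing/cogluing lemmas in stable model categories) and compact generation, follows formally from the orthogonal case.
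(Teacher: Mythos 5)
Your outline is essentially the paper's own justification: the statement is quoted directly from \cite[Theorem 9.2]{MMSS01}, the point being that $\I$ (equivalently $\J_1$) satisfies the diagram-category axioms of that paper, so the level/stable construction transfers verbatim with suspension coordinate $S^{\C}=S^2$. Your transcription of that argument — projective level structure, localisation at the $\lambda_{U,V}$ via the mapping-cylinder maps $k_{U,V}$, identification of the localised equivalences with the $\pi_*$-isomorphisms, and the even-degree connectivity bookkeeping — is exactly the intended proof, so there is nothing to add.
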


Let $\J^\mathscr{O}$ be the category of finite-dimensional real inner product subspaces of $\R^\infty$ with morphisms real linear isometries. There is a realification (decomplexification) functor $r: \J \longrightarrow \J^\mathscr{O}$ given by forgetting the complex structure, that is,  $r(\C^k) = \R^{2k}$. Precomposition with $r$ gives a functor $r^* : \s^\mathscr{O} \longrightarrow \s^\mathscr{U}$, which we call pre-realification. 

Pre-realification has a left adjoint $r_! : \s^\mathscr{U} \longrightarrow \s^\mathscr{O}$ given by the left Kan extension along $r$, that is, 
\[
r_!(X)(V) = \int^{U \in \J} \J_1^\mathscr{O}(r(U), V) \wedge X(U). 
\]

\begin{thm}[Theorem F]\label{thm: QE of models of spectra}
The adjoint pair 
\[
\adjunction{r_!}{\s^\mathscr{U}}{\s^\mathscr{O}}{r^*},
\]
is a Quillen equivalence.
\end{thm}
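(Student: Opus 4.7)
The plan is to verify that $(r_!, r^*)$ is a Quillen adjunction, and then to upgrade this to a Quillen equivalence via a compact-generator argument exploiting the stability of both sides. The linchpin of both stages is the identification of $r_!$ on free spectra.

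For the Quillen adjunction, I would compute $r_!$ on free (representable) objects using the coend formula and Yoneda: for any $U \in \J$ and based space $K$, one has $r_!(F^\mathscr{U}_U K) \cong F^\mathscr{O}_{r(U)} K$. The generating cofibrations of $\s^\mathscr{U}$ are the maps $F^\mathscr{U}_U i$ for $U \in \I$ and $i \in I$, and $r_!$ sends these to $F^\mathscr{O}_{r(U)} i$, which are generating cofibrations of $\s^\mathscr{O}$. The generating acyclic cofibrations $F^\mathscr{U}_U j$ and $k^\mathscr{U}_{U,V} \Box i$ are similarly sent to the analogously defined maps in $\s^\mathscr{O}$ with $r(U)$ and $r(V)$ in place of $U$ and $V$; these are again generating acyclic cofibrations on the orthogonal side. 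Hence $r_!$ is a left Quillen functor.

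For the Quillen equivalence, I would verify (a) that $r^*$ reflects weak equivalences between fibrant objects and (b) that the derived unit is a weak equivalence on a generating set of cofibrant objects. For (a), the essential computation is
\[
\pi_k(r^* Y) = \colim_q \pi_{2q+k}\, Y(\R^{2q}),
\]
which is a cofinal subsystem of $\pi_k Y = \colim_m \pi_{m+k}\, Y(\R^m)$. When $Y$ is an $\Omega$-spectrum the full colimit stabilises, so $\pi_k(r^* Y) \cong \pi_k Y$ naturally, giving (a). For (b), a convenient generating set is $\{F^\mathscr{U}_U S^0 : U \in \J\}$, and we must check that the unit $F^\mathscr{U}_U S^0 \to r^* F^\mathscr{O}_{r(U)} S^0$ is a $\pi_*$-isomorphism. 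When $U = 0$ the unit is literally the identity, since $\mathbb{S}^\mathscr{U} = F^\mathscr{U}_0 S^0$ has value at $V$ the one-point compactification $S^V$, whose underlying space coincides with $S^{r(V)}$, the value of $r^*\mathbb{S}^\mathscr{O}$ at $V$.

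The main obstacle is completing step (b) for general $U$. Level-wise the unit is the inclusion of the complex Stiefel manifold $\J(U, V)_+$ into the real Stiefel manifold $\J^\mathscr{O}(r(U), r(V))_+$, and one must argue that this becomes an isomorphism after passing to stable homotopy. The cleanest approach is to exploit stability: both $\Ho(\s^\mathscr{U})$ and $\Ho(\s^\mathscr{O})$ are compactly generated triangulated categories whose single compact generator is the sphere spectrum, the derived functor $\mathbb{L}r_!$ sends the unitary sphere to the orthogonal sphere via an equivalence by the $U = 0$ case, and $\mathbb{L}r_!$ preserves coproducts and distinguished triangles as a left adjoint between stable model categories. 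A standard triangulated-category argument (essentially Schwede--Shipley) then forces $(r_!, r^*)$ to be a Quillen equivalence. Alternatively, one may argue directly that the inclusion of complex into real Stiefel manifolds induces a stable equivalence via a connectivity estimate combined with the telescope construction for $\pi_*$.
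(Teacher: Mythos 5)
Your proposal is correct and follows essentially the same route as the paper: establish the Quillen adjunction, show $r^*$ detects $\pi_*$-isomorphisms via cofinality of the even-dimensional subsystem of the homotopy-group colimit, and use stability plus compact generation by the sphere spectrum to reduce the derived-unit check to $\mathbb{S}$ alone, where the unit is essentially the identity. Your detour through the whole family $\{F^{\mathscr{U}}_U S^0\}$ (whose levels are, incidentally, the Thom spaces $\J_1(U,V)$ rather than Stiefel manifolds with disjoint basepoints) is abandoned in favour of your ``cleanest approach,'' which is exactly the paper's single-generator argument.
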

\begin{proof}
The right adjoint preserves acyclic fibrations (which are levelwise acyclic fibrations of based spaces, see \cite[Proposition 9.9]{MMSS01}) and fibrant objects. Moreover a standard cofinality argument shows that the right adjoint is homotopically conservative, that is, reflects weak equivalences. It is left to show that the derived unit of the adjunction is an isomorphism. Note that the left adjoint preserves coproducts. Then, since the stable model structure on unitary spectra is homotopically compactly generated by the unitary sphere spectrum $\mathbb{S}$ and both $\s^\mathscr{U}$ and $\s^\mathscr{O}$ are stable model categories, it suffices to show that the unit is an equivalence on the generator $\mathbb{S}$. Applying the left adjoint $r_!$ to the sphere $\mathbb{S}$, is equivalent to the functor $V \mapsto \mathbb{S}(c(V))$, where $c$ is the complexification functor, and the unit of the adjunction is equivalent to the map $\mathbb{S} \to r^*c^*\mathbb{S} = (c \circ r)^*\mathbb{S}$, which is readily seen to be a stable equivalence.
\end{proof}

\begin{cor}\label{cor: spectra QE}
Let $n$ be a non-negative integer. The adjoint pair 
\[
\adjunction{r_!}{\s^\mathscr{U}[\U(n)]}{\s^\mathscr{O}[\U(n)]}{r^*},
\]
is a Quillen equivalence.
\end{cor}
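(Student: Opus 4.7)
The plan is to deduce this directly from Theorem \ref{thm: QE of models of spectra} by observing that both the adjunction and the model structures in question are compatible with the $\U(n)$-action, and that weak equivalences and fibrations are detected after forgetting this action.

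First I would verify that the adjoint pair lifts to the equivariant categories. The right adjoint $r^*$ is given by precomposition with the realification functor $r$, which does not involve the group $\U(n)$ at all, so any $\U(n)$-action on an orthogonal spectrum $Y$ is inherited levelwise by $r^*Y$. Similarly, the left Kan extension $r_!$ is given by the coend
\[
r_!(X)(V) = \int^{U \in \I} \J_1^\mathscr{O}(r(U), V) \wedge X(U),
\]
and the $\U(n)$-action on $X$ passes through the smash factor to give a $\U(n)$-action on $r_!(X)$. Functoriality of the non-equivariant adjunction isomorphism then yields the equivariant one.

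Next I would check that the lifted pair is a Quillen adjunction. The model structures on $\s^\mathscr{U}[\U(n)]$ and $\s^\mathscr{O}[\U(n)]$ are the standard projective structures transferred from $\s^\mathscr{U}$ and $\s^\mathscr{O}$ along the free-forgetful adjunction with $\U(n)$, so weak equivalences and fibrations are detected by forgetting the $\U(n)$-action. By Theorem \ref{thm: QE of models of spectra}, $r^*$ preserves fibrations and acyclic fibrations at the non-equivariant level, and since these are detected after forgetting, $r^*$ preserves them equivariantly as well.

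Finally, I would verify the Quillen equivalence criterion. Because weak equivalences in the equivariant categories are detected by the forgetful functor, the equivariant derived unit $X \to r^* (r_!X)^{fib}$ is a weak equivalence if and only if its underlying map of unitary spectra is, and likewise for the counit. Both of these reduce to the unit and counit of the non-equivariant adjunction $(r_!, r^*)$, which are weak equivalences by Theorem \ref{thm: QE of models of spectra}. The main point to be careful about is really just that the transferred model structure exists and that forgetting the $\U(n)$-action is homotopically conservative; both are standard facts for projective model structures on $G$-objects in a cofibrantly generated model category, and no genuine obstacle is expected beyond this bookkeeping.
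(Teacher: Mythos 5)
Your proposal is correct and takes essentially the approach the paper intends: the paper states Corollary \ref{cor: spectra QE} without a separate proof, treating it as an immediate consequence of Theorem \ref{thm: QE of models of spectra}, and your argument simply spells out the standard facts being relied upon (the adjunction and the underlying-detected model structures lift to $\U(n)$-objects, so the Quillen adjunction and the derived unit/counit conditions are checked after forgetting the action).
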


\begin{rem}
This relation between orthogonal and unitary spectra hints at a bigger relation between the two calculi. The pre-realification functor defines a functor between the orthogonal calculus input category and the unitary calculus input category. In future work we will examine how the pre-realification functor behaves with respect to $n$-polynomial and $n$-homogeneous functors. 
\end{rem}

\subsection{The Quillen equivalence}
The intermediate category is Quillen equivalent to the stable model structure on the category of orthogonal spectra with an action of $\U(n)$. 
To construct the required adjunction we first construct a $\U(n)\T$-enriched functor $\alpha_n: \J_n \longrightarrow \varepsilon^*\J_1$, where $\varepsilon^*\J_1$ is the $1$-st jet category equipped with the trivial $\U(n)$-action via the inclusion of the trivial subgroup $\varepsilon : \{e\} \to U(n)$. On objects, define $\alpha_n(V) = \C^n \otimes V = nV$. On morphisms, define $\alpha_n(f,x) = (\C^n \otimes f, x)$. This defines a $\U(n)\T$-enriched functor $\alpha_n : \J_n \longrightarrow \varepsilon^*\J_1$.

The adjunction is given as follows

\begin{prop}
Let $n$ be a non-negative integer. There is an adjoint pair 
\[
\adjunction{(\alpha_n)_!}{\U(n)\E_n}{\s^\mathscr{U}[\U(n)]}{(\alpha_n)^*},
\]
with $\alpha_n^* \Theta (V) = \Theta(nV)$, and $(\alpha_n)_!$ is the left Kan extension along $\alpha_n$. 
\end{prop}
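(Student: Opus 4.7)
The plan is to exhibit $(\alpha_n)_!$ as the standard enriched left Kan extension along the $\U(n)\T$-enriched functor $\alpha_n \colon \J_n \longrightarrow \varepsilon^*\J_1$, and invoke the general principle that, in a sufficiently cocomplete enriched setting, such a left Kan extension is left adjoint to precomposition. Before exhibiting the left adjoint, I would verify that $(\alpha_n)^*$ is well-defined as a functor $\s^\mathscr{U}[\U(n)] \longrightarrow \U(n)\E_n$: viewing a unitary $\U(n)$-spectrum $\Theta$ as a $\T$-enriched functor $\J_1 \longrightarrow \T$ equipped with a compatible external $\U(n)$-action, the functor $(\alpha_n)^*\Theta(V) = \Theta(\C^n \otimes V)$ carries two commuting $\U(n)$-actions, one from the external action on $\Theta$ and one induced by $\alpha_n$ through the tensor factor $\C^n$. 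Combining these via the diagonal yields the required $\U(n)\T$-enriched functor on $\J_n$, and one checks this combined action is precisely what matches the $\U(n)\T$-enrichment of $\J_n$ (which comes from the regular representation action on $\C^n$).

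Next I would define $(\alpha_n)_!$ on an object $X \in \U(n)\E_n$ by the standard enriched coend formula
\[
(\alpha_n)_! X(V) = \int^{U \in \J_n} \J_1(\C^n \otimes U, V) \wedge_{\U(n)} X(U),
\]
with the residual $\U(n)$-action on the target coming from the external action on $X$ (the $\U(n)$-action on $\J_1(\C^n \otimes U, V)$ being absorbed into the coend via the action on $U$). One then checks that the iterated structure maps $S^{2m} \wedge ((\alpha_n)_!X)(\C^q) \longrightarrow ((\alpha_n)_!X)(\C^{q+m})$ assemble with the requisite $(\U(q) \times \U(m))$-equivariance, so that $(\alpha_n)_!X$ is indeed an object of $\s^\mathscr{U}[\U(n)]$.

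The adjunction is then the formal statement that enriched Kan extension is left adjoint to precomposition. Concretely, one computes
\[
\Map_{\s^\mathscr{U}[\U(n)]}\!\bigl((\alpha_n)_!X,\ \Theta\bigr) \cong \int_{U \in \J_n} \Map\!\bigl(X(U),\ \Theta(\C^n \otimes U)\bigr)^{\U(n)} \cong \Map_{\U(n)\E_n}\!\bigl(X,\ (\alpha_n)^*\Theta\bigr),
\]
where the first isomorphism is the universal property of the coend together with the standard tensor-hom adjunction, and the second is the definition of $(\alpha_n)^*$ together with the enriched Yoneda lemma.

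The main obstacle will not be the formal adjunction, which is routine enriched category theory, but rather bookkeeping the $\U(n)$-equivariance throughout: namely ensuring that the two sources of $\U(n)$-action (external on $X$ or $\Theta$, and internal through the $\alpha_n$-action on $\C^n$) are combined consistently on both sides of the adjunction. Once that bookkeeping is done carefully for objects and for morphisms in $\J_n$, naturality and the adjunction identities follow automatically from the corresponding properties of the enriched Kan extension.
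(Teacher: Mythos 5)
Your overall strategy --- define $(\alpha_n)^*$ by precomposition with the combined (external plus internal) $\U(n)$-action and write $(\alpha_n)_!$ as an enriched coend, deducing the adjunction formally --- is the same as the paper's, but your formula for the left adjoint contains a genuine error, and it sits exactly at the equivariance bookkeeping you yourself flagged as the main obstacle. You write
\[
(\alpha_n)_!X(V) = \int^{U \in \J_n} \J_1(\C^n \otimes U, V) \wedge_{\U(n)} X(U),
\]
quotienting by $\U(n)$ and claiming the action on $\J_1(\C^n \otimes U,V)$ is ``absorbed into the coend via the action on $U$''. This cannot be right: $\U(n)$ does not act on the objects of $\J_n$, and the automorphisms $u \otimes \id_U$ of $\C^n \otimes U$ are not in the image of $\alpha_n$ on $\J_n(U,U) \cong \U(U)_+$ (whose image consists of maps of the form $\id_{\C^n} \otimes f$), so the coend coequalizes only the $\J_n$-morphism action and does not identify $\U(n)$-translates. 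Moreover, once you pass to $\U(n)$-orbits there is no ``residual'' action left --- a general $X \in \U(n)\E_n$ carries a single $\U(n)$-action on its values, not two commuting ones --- and the adjunction fails. Concretely, test on the free object $X = \J_n(U_0,-) \wedge \U(n)_+$: the right-hand side of the adjunction gives $\Hom_{\U(n)\E_n}(X, (\alpha_n)^*\Theta) \cong \Theta(nU_0)$ as a space, whereas your formula collapses by co-Yoneda to $\J_1(nU_0,-) \wedge_{\U(n)} \U(n)_+ \cong \J_1(nU_0,-)$ with (at best) trivial action, and equivariant maps out of that compute fixed points of $\Theta(nU_0)$, not all of it.

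The correct formula, as in the paper, is $(\alpha_n)_!X(V) = \int^{U \in \J_n} X(U) \wedge \J_1(nU,V)$ with no orbits: the coend is computed in $\U(n)\T$, so $\U(n)$ acts diagonally, on $X(U)$ by its given action and on $\J_1(nU,V)$ by precomposition through the regular representation on $\C^n$; this diagonal action descends to the coend and commutes with the left $\J_1$-action, and it is what furnishes the external $\U(n)$-action on the resulting unitary spectrum. With that action your chain of isomorphisms does go through (the fixed points in the end are taken with respect to the combined action on $\Theta(\C^n \otimes U)$, matching the definition of $(\alpha_n)^*$), and the remaining work --- spelling out the right $\J_n$-module structure on $\J_1(n(-),V)$ through $\alpha_n$ so the coend is well defined, then a calculus-of-coends argument --- is exactly what the paper does.
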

\begin{proof}
Precomposition with $\alpha_n$ defines a functor 
\[
(\alpha_n)^* : \s^\mathscr{U}[U(n)] \longrightarrow \U(n)\E_n, 
\]
where $U(n)$-acts on $(\alpha_n)^*\Theta(V) = \Theta(nV)$ by composition of the internal $\U(n)$-action via the regular representation of $\U(n)$ on $\C^n \otimes V$ and the external $\U(n)$-action of $\Theta(nV) \in \U(n)\T$. By definition of $X$ these actions commute. Checking that this action gives a well defined object of $\U(n)\E_n$ follows as for the orthogonal version \cite[\S8]{BO13}. 

The left Kan extension along $\alpha_n$, denoted $(\alpha_n)_!$, may be written as the coend
\[
(\alpha_n)_!Y(V) = \int^{U \in \J_n} Y(U) \wedge \J_1(nU, V)
\]
which is suitably enriched since $\J_1$ acts on the left of $\J_1(nU,V)$ by the composition 
\[
\J_1(V,W) \wedge \J_1(nU, V) \longrightarrow \J_1(nU,W),
\]  
and $\J_n$ acts on the right via the composition
\begin{alignat*}{2}
 \J_1(nU, V) \wedge \J_n(W,U) &\longrightarrow \J_1(nU, V) \wedge \J_1(nW, nU) &&\longrightarrow \J_1(nW, V) \\
  ((g,y) ,(f,x)) &\longmapsto ((g,y),(\C^n\otimes f, x)) &&\longmapsto (g\circ (\C^n \otimes f), y + (\id_{\C^n} \otimes g)(x)).
\end{alignat*}
The adjunction then follows by a straightforward calculus of coends argument. 
\end{proof}

\begin{thm}[Theorem D]\label{QE: intermediate and spectra} 
Let $n$ be a non-negative integer. The  adjoint pair 
\[
\adjunction{(\alpha_n)_!}{\U(n)\E_n}{\s^\mathscr{U}[\U(n)]}{(\alpha_n)^*},
\]
is a Quillen equivalence.
\end{thm}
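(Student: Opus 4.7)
The plan is to follow the same general strategy Taggart employs in Theorem \ref{thm: QE of models of spectra}, since both source and target are stable, homotopically compactly generated model categories: we verify that $((\alpha_n)_!, (\alpha_n)^*)$ is a Quillen adjunction, that $(\alpha_n)^*$ reflects weak equivalences between fibrant objects, and that the derived unit is an equivalence on a compact generator. Since the left adjoint preserves all homotopy colimits (it is a left Quillen functor and preserves coproducts), this will suffice.

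For the Quillen pair, I would first note that acyclic fibrations in both the $n$-stable model structure on $\U(n)\E_n$ and the stable model structure on $\s^\mathscr{U}[\U(n)]$ are the levelwise acyclic Serre fibrations of based spaces, which follows from the MMSS framework (compare \cite[Proposition 9.9]{MMSS01}); so $(\alpha_n)^*$, being precomposition with $\alpha_n$, preserves them. It then remains to show that $(\alpha_n)^*$ sends $\Omega$-spectra to $n\Omega$-spectra: if $\Theta$ is a fibrant object of $\s^\mathscr{U}[\U(n)]$, then substituting $W = nV$ and $W' = nU$ into the equivalences $\Theta(W) \to \Omega^{W'}\Theta(W \oplus W')$ yields precisely the $n\Omega$-spectrum condition on $(\alpha_n)^*\Theta$ of Definition \ref{def: n-omega spectra}.

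For reflection of weak equivalences between fibrant objects, I would use a cofinality argument. The sequence of inclusions $\C^{nq} \hookrightarrow \C^{n(q+1)}$ is cofinal in $\C^m \hookrightarrow \C^{m+1}$, so
\[
n\pi_k\bigl((\alpha_n)^*\Theta\bigr) = \underset{q}{\colim}\, \pi_{2nq+k}\Theta(\C^{nq}) \;\cong\; \underset{m}{\colim}\, \pi_{2m+k}\Theta(\C^m) = \pi_k\Theta,
\]
and this identification is natural, so a map $f$ with $(\alpha_n)^*f$ an $n\pi_*$-isomorphism is itself a $\pi_*$-isomorphism. To handle the derived unit, I would exploit that left Kan extensions of representables are representables: the coend formula gives $(\alpha_n)_!(\J_n(V,-)) \cong \J_1(\alpha_n(V),-) = \J_1(nV,-)$, and in particular the compact generator $\U(n)_+\wedge n\mathbb{S} = \U(n)_+\wedge \J_n(0,-)$ is sent to $\U(n)_+ \wedge \J_1(0,-) = \U(n)_+\wedge \mathbb{S}$, which is the compact generator of $\s^\mathscr{U}[\U(n)]$. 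The unit map on this generator is then an identity on underlying spaces at level zero, and becomes an $n\pi_*$-iso after composing with a fibrant replacement by the reflection argument just established. Since both categories are stable and the left adjoint preserves coproducts, shifts, and cofibre sequences, checking on the compact generator is enough to conclude.

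The main obstacle will be keeping track of the two distinct flavours of $\U(n)$-equivariance: in $\U(n)\E_n$ there is an internal $\U(n)$-action via the regular representation on $\C^n \otimes V$ interacting with the external action on values in $\U(n)\T$, while in $\s^\mathscr{U}[\U(n)]$ the action is purely external. Verifying that $(\alpha_n)^*$ combines these correctly — and in particular that the coend defining $(\alpha_n)_!$ descends to a well-defined $\U(n)$-equivariant functor and matches representables with the correct twisted action — is where care is needed. A secondary technical point is confirming that the $n$-stable model structure really is homotopically compactly generated by $\U(n)_+\wedge n\mathbb{S}$ in the sense required for the generator-only criterion of Quillen equivalence between stable model categories to apply; this should follow from Proposition \ref{prop: n-stable model structure} together with the general fact that shifts of this generator exhaust the cells $\J_n(V,-)\wedge \U(n)_+$ up to equivalence, but it warrants an explicit check.
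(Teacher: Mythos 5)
Your proposal is correct and follows essentially the same route as the paper: check that the precomposition functor $(\alpha_n)^*$ preserves acyclic fibrations and fibrant objects, show it reflects weak equivalences via the cofinality identification $n\pi_k((\alpha_n)^*\Theta)\cong\pi_k\Theta$, and then use stability plus preservation of coproducts by the left adjoint to reduce the derived unit to the homotopically compact generator $\U(n)_+\wedge n\mathbb{S}$, where the coend formula for $(\alpha_n)_!$ (sending representables to representables) settles the matter. Your added explicit checks on the interaction of the internal and external $\U(n)$-actions and on compact generation are elaborations of steps the paper treats as routine, not deviations from its argument.
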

\begin{proof}
As it is defined by precomposition, the right adjoint preserves acyclic fibrations and fibrant objects. Suppose $f: \Theta \longrightarrow \Psi$ is a map of spectra such that $(\alpha_n)^*f: (\alpha_n)^*\Theta \longrightarrow (\alpha_n)^*\Psi$ is a $n\pi_*$-isomorphism. Then 
\[
\pi_k \Theta = \underset{q}{\colim}~ \pi_{2q+k} \Theta(\C^q) \cong \underset{q}{\colim}~ \pi_{2nq+k} \Theta(\C^{nq}) \cong \underset{q}{\colim}~ \pi_{2q+k} \Theta(n\C^q) = n\pi_k (\alpha_n)^*\Theta.
\]
A similar calculation shows that $\pi_k \Psi \cong n\pi_k (\alpha_n)^*\Psi$, and hence the right adjoint is homotopically conservative. 

Since both model categories are stable and the left adjoint commutes with coproducts, it suffices to show that the (derived) unit is an isomorphism on the homotopical compact generator $\U(n)_+ \wedge n\mathbb{S}$ of $U(n)\E_n$. This is similar to Theorem \ref{thm: QE of models of spectra}, where we write the left adjoint as a coend and work through the definitions. 
\end{proof}

\begin{cor}\label{cor: intermediate and ortho spectra}
Let $n$ be a non-negative integer. The  adjoint pair 
\[
\adjunction{(\alpha_n \circ r)_!}{\U(n)\E_n}{\s^\mathscr{O}[\U(n)]}{(\alpha_n \circ r)^*},
\]
is a Quillen equivalence.
\end{cor}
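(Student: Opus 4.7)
The plan is to deduce the corollary by composing the two Quillen equivalences just established. By Theorem \ref{QE: intermediate and spectra} the adjunction $(\alpha_n)_! \dashv (\alpha_n)^*$ between $\U(n)\E_n$ and $\s^\mathscr{U}[\U(n)]$ is a Quillen equivalence, and by Corollary \ref{cor: spectra QE} the realification adjunction $r_! \dashv r^*$ between $\s^\mathscr{U}[\U(n)]$ and $\s^\mathscr{O}[\U(n)]$ is a Quillen equivalence. The strategy is simply to verify that the composite of these two adjunctions is the one named in the statement, and then to invoke the general fact that Quillen equivalences compose.

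First I would identify the composite adjunction. The composite right adjoint $(\alpha_n)^* \circ r^*$ is, by functoriality of precomposition, the precomposition functor along the composite diagram functor $\alpha_n \circ r : \J_n \to \J_1^{\mathscr{O}}$ (following the author's convention for composition), and this is precisely $(\alpha_n \circ r)^*$. Correspondingly, the composite left adjoint $r_! \circ (\alpha_n)_!$ is canonically isomorphic to $(\alpha_n \circ r)_!$, since iterated left Kan extension agrees with left Kan extension along the composite. The enrichments also match: $\alpha_n$ carries the regular representation $\U(n)$-action on $\C^n$, while $r$ is trivially $\U(n)$-equivariant (it alters no $\U(n)$-action), so the composite adjunction genuinely lives between $\U(n)$-equivariant diagram categories with the expected actions.

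Second, I would invoke the fact that the composition of two Quillen equivalences is a Quillen equivalence. That the composite is a Quillen adjunction is immediate because each right adjoint preserves fibrations and acyclic fibrations and these properties are closed under composition. That the derived unit and counit of the composite are weak equivalences follows by whiskering: the derived unit of the composite factors as $\eta_{r_! \circ (\alpha_n)_!} \simeq (\alpha_n)^* (\eta_{r_!}) \circ \eta_{(\alpha_n)_!}$, each factor being a weak equivalence between cofibrant objects after left derivation, and dually for the counit.

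There is essentially no obstacle here, since the real content lies in Theorem \ref{QE: intermediate and spectra} and Corollary \ref{cor: spectra QE}; the only verification needed is the bookkeeping above, which confirms that the two-step composite is the adjunction associated to the composite functor $\alpha_n \circ r$ with its evident $\U(n)$-equivariance.
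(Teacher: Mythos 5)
Your proposal is correct and follows essentially the same route as the paper: the paper's proof is exactly to combine Theorem \ref{QE: intermediate and spectra} with the spectra-level equivalence (Theorem \ref{thm: QE of models of spectra} / Corollary \ref{cor: spectra QE}) and cite that composites of left (resp.\ right) Quillen functors are again left (resp.\ right) Quillen. Your additional bookkeeping — identifying the composite adjoints with $(\alpha_n \circ r)_!$ and $(\alpha_n \circ r)^*$ and whiskering the derived units — just spells out details the paper leaves implicit.
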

\begin{proof}
This is Theorem \ref{QE: intermediate and spectra} and Theorem \ref{thm: QE of models of spectra}, together with the fact that composition of left (resp. right) Quillen functors is a left (resp. right) Quillen functor. 
\end{proof}

\section{Differentiation as a Quillen functor}\label{section: diff as a QF}
We construct a Quillen equivalence between the intermediate category and the $n$-homogeneous model structure on $\E_0$, which allows for a slicker proof of the characterisation of $n$-homogeneous functors than the orthogonal version of Weiss \cite[Theorem 7.3]{We95}.

There is an adjunction 
\[
\adjunction{\res_0^n}{\E_n}{\E_0}{\ind_0^n},
\]
which we want to extend to an adjunction between $U(n)\E_n$ and $\E_0$. To do this, we combine the restriction-induction adjunction with the change of group adjunctions from \cite{MM02}.

\begin{definition}\label{def of restriction-orbit functor}
	Let $0\leq m <n$. Define the \textit{restriction-orbit} functor 
\[
\res_n^m/ \U(n-m): \U(n)\E_n \longrightarrow \U(m)\E_m
\] 
as $X \longrightarrow (X \circ i_m^n)/\U(n-m)$, where $ ((X \circ i_m^n)/\U(n-m))(V) = X(V)/\U(n-m).$ 
\end{definition}
 
The $\U(n-m)$-orbits functor $(-)/\U(n-m) : \U(n)\T \longrightarrow \U(m)\T$ has a right adjoint. Let $A$ be a $U(m)$-space, then $A$ is a $(\U(m)\times \U(n-m))$-space by letting $\U(n-m)$ act trivially. To distinguish we denote the $(\U(m)\times \U(n-m))$-space $A$ by $\varepsilon^*A$. Letting $F(-,-)$ denote the internal function object in $(\U(m)\times \U(n-m))$-spaces, we define the inflation of $A$ as
\[
\mathrm{CI}_m^n A = F_{\U(m)\times \U(n-m)}(\U(n)_+,  \varepsilon^*A)
\]
where $\U(m) \subset \U(n)$ acts on the first $m$-coordinates and $\U(n-m)$ acts on the latter $(n-m)$-coordinates. 

\begin{definition}\label{inflation-induction}
	Let $0\leq m <n$. Define the \textit{inflation-induction} functor $\ind_m^n \mathrm{CI} : \U(m)\E_m \longrightarrow \U(n)\E_n$ to be 
\[
(\ind_m^n \mathrm{CI} (X))(V) = \U(m)\E_m(\J_n(V,-), \mathrm{CI}_m^n X).
\]
\end{definition} 

In particular, there is an adjunction
\[
\adjunction{\res_0^n/\U(n)}{\U(n)\E_n}{\E_0}{\ind_0^n\varepsilon^*},
\]
where $\varepsilon$ is the inclusion of the trivial subgroup into $\U(n)$. This adjunction is a Quillen adjunction.

\begin{prop}\label{prop: QA for differentiation}
Let $n$ be a non-negative integer. The adjoint pair
\[
\adjunction{\res_0^n/\U(n)}{\U(n)\E_n}{n\homog\E_0}{\ind_0^n\varepsilon^*},
\]
is a Quillen adjunction when $\U(n)\E_n$ is equipped with the $n$-stable model structure.
\end{prop}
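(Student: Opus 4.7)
My strategy is to verify that the right adjoint $\ind_0^n\varepsilon^*$ preserves fibrations and acyclic fibrations. This is preferable to tracking cofibrations across the right Bousfield localisation producing $n\homog\E_0$, because fibrations in $n\homog\E_0$ coincide with those of $n\poly\E_0$ and admit a concrete description in terms of the $T_n$-square. I first establish the projective Quillen adjunction by checking generators, and then leverage the defining homotopy pullback squares of fibrations in the localised structures together with Lemma \ref{derivatives of n-poly are stable} to handle the localisations.

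\textbf{Projective level.} A generating (acyclic) cofibration $\J_n(V,-) \wedge \U(n)_+ \wedge i$ of the projective structure on $\U(n)\E_n$ is sent by $\res_0^n/\U(n)$ to the map $\J_n(V,-)|_{\J_0} \wedge i$, since the diagonal $\U(n)$-orbits of the free factor $\U(n)_+$ trivialise the action. By Theorem \ref{hocolim sphere homeo}, the restricted Thom-space functor $\J_n(V,-)|_{\J_0}$ is a homotopy colimit of projectively cofibrant representables $\J_0(U \oplus V,-)$ and hence itself projectively cofibrant, so the image is a projective (acyclic) cofibration in $\E_0$. Thus $\res_0^n/\U(n) \dashv \ind_0^n\varepsilon^*$ is a Quillen adjunction at the projective level, and in particular $\ind_0^n\varepsilon^*$ preserves projective fibrations and projective acyclic fibrations.

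\textbf{Handling the localisations.} For an $n\poly\E_0$-fibration $f : X \to Y$, which by definition is a projective fibration whose $T_n$-square is a homotopy pullback, applying the right adjoint produces a homotopy pullback square of projective fibrations
\[
\xymatrix{
X^{(n)} \ar[r] \ar[d]_{f^{(n)}} & (T_nX)^{(n)} \ar[d]^{(T_nf)^{(n)}} \\
Y^{(n)} \ar[r] & (T_nY)^{(n)}
}
\]
in $\U(n)\E_n$. By Lemma \ref{derivatives of n-poly are stable} the right column lies between $n\Omega$-spectra, and the map between these fibrant objects of the $n$-stable model structure is an $n$-stable fibration, since a projective fibration between fibrant objects of a left Bousfield localisation is a fibration in the localisation. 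If furthermore $f$ is an $n\homog\E_0$-acyclic fibration, then $\ind_0^n T_n f = (T_nf)^{(n)}$ is a levelwise weak equivalence and hence a projective acyclic fibration, whereby pulling back yields $f^{(n)}$ as a projective acyclic fibration and hence an $n$-stable acyclic fibration. For a general $n\poly\E_0$-fibration $f$, the standard characterisation of fibrations in a left Bousfield localisation identifies $f^{(n)}$ as an $n$-stable fibration provided the horizontal maps of the square above present the right column as the $n\Omega$-spectrum fibrant replacement of the left.

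\textbf{Main obstacle.} The core technical point is to show that for any $A \in \E_0$ the unit $A \to T_nA$ induces an $n\pi_*$-isomorphism on $n$-th derivatives. Applying Proposition \ref{derivatives stable} to $A^{(n)} \in \E_n$ identifies the fibre of $A^{(n)}(U) \to \Omega^{2n}A^{(n)}(U \oplus \C)$ with $A^{(n+1)}(U)$, and by Corollary \ref{cor: n+1 derivative of n-poly} the corresponding fibre vanishes for $T_nA$. Using the presentation $T_n = \hocolim_k \tau_n^k$ together with the fact that the finite homotopy limits defining $\ind_0^n$ commute with filtered homotopy colimits (as employed in the proof of Lemma \ref{Tn is n-polynomial}), and iterating the homotopy fibre sequence of Proposition \ref{prop: hofib of n-poly map}, one deduces that the comparison map is an $n\pi_*$-isomorphism, completing the proof.
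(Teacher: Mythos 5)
Your overall plan (check that $\ind_0^n\varepsilon^*$ preserves fibrations and acyclic fibrations) is not the paper's route, and as written it has a genuine gap at exactly the point you label the ``main obstacle''. To handle a general fibration of $n\homog\E_0$ you need that for \emph{every} $A \in \E_0$ the unit $A \to T_nA$ induces an $n\pi_*$-isomorphism $A^{(n)} \to (T_nA)^{(n)}$, and your final paragraph asserts rather than proves this. Pushing the fibre sequence of Proposition \ref{prop: hofib of n-poly map} through $\ind_0^n$ identifies the homotopy fibre of $A^{(n)} \to (\tau_nA)^{(n)}$ with $\ind_0^n$ applied to (the restriction to $\E_0$ of) the $(n+1)$-st derivative of $A$; what you must show is that this has vanishing $n$-stable homotopy groups for arbitrary $A$. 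Corollary \ref{cor: n+1 derivative of n-poly} gives vanishing of the $(n+1)$-st derivative only for $n$-polynomial inputs, so it applies to $T_nA$ but not to $A$ or to the stages $\tau_n^kA$, and commuting $\ind_0^n$ with the filtered homotopy colimit merely reduces the question to the single-$\tau_n$ statement --- it does not prove it. The fact you need (stably, differentiation does not distinguish $F$ from $T_nF$) is true, but it is a substantive connectivity/stability theorem of the same calibre as the classification results, not a formal consequence of the cited lemmas; ``one deduces'' is where the proof is missing. A secondary unjustified step: you call the right-hand column $(T_nf)^{(n)}$ a projective fibration between $n\Omega$-spectra, but $T_nf$ need not be a levelwise fibration; the MMSS-style characterisation of $n$-stable fibrations can avoid this, but it requires the horizontal maps to be $n$-stable equivalences into $n$-stably fibrant objects --- again your unproven claim. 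Also, Theorem \ref{hocolim sphere homeo} identifies the sphere bundle $S\gamma_{n+1}(V,-)$, not the Thom space, with the homotopy colimit of representables; cofibrancy of $\J_{n+1}(V,-)$ follows because it is the mapping cone of a map of cofibrant objects, so your cofibrancy justification needs that extra step.

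The paper's proof deliberately avoids showing that the right adjoint preserves all fibrations. It first gets the projective-level Quillen adjunction from cofibrancy of $\J_n(V,-)$ (as above), composes with the identity adjunction into $n\poly\E_0$, and then uses the localisation criteria of Hirschhorn \cite[Theorem 3.1.6, Proposition 3.3.18]{Hi03}: for the left Bousfield localisation giving the $n$-stable structure it suffices that $\ind_0^n\varepsilon^*$ send fibrant objects of $n\poly\E_0$ (the $n$-polynomial functors) to $n\Omega$-spectra, which is exactly Lemma \ref{derivatives of n-poly are stable}; and for the right Bousfield localisation giving $n\homog\E_0$ it suffices that it send $\mathcal{K}_n$-cellular equivalences between fibrant objects to $n$-stable equivalences, which is immediate since $\ind_0^n\varepsilon^*E(V) = \E_0(\J_n(V,-),E)$ is precisely mapping out of a cell. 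This checks conditions only on fibrant objects and on maps between them, which is a strictly lighter burden than your fibration-by-fibration verification; if you insist on your route, you must first prove the derivative-comparison statement $A^{(n)} \simeq_{n\pi_*} (T_nA)^{(n)}$ as a separate lemma.
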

\begin{proof}
The projective model structure on $\U(n)\E_n$ is cofibrantly generated, hence by \cite[Lemma 2.1.20]{Ho99} it suffices to show that the left adjoint preserves the generating (acyclic) cofibrations. Restriction-orbits applied to a generating (acyclic) cofibration yeilds $\J_n(V,-) \wedge i$ where $i$ is a generating (acyclic) cofibration of $\T$. 

Since the projective model structure on $\E_0$ is cofibrantly generated, $\J_0(V,-)$ is cofibrant. The sphere bundle $S\gamma_n(V,-)_+$ is homeomorphic to the homotopy colimit
\[
\underset{0 \neq U \subset \C^{n+1}}{\hocolim}~ \J_0(U \oplus V, -),
\]
and hence $S\gamma_n(V,-)_+$ is cofibrant by \cite[Theorem 18.5.2(1)]{Hi03}. Since $ S\gamma_n(V,-)_+$ and  $\J_0(V,-)$ are both cofibrant, and the mapping cone of a map between cofibrant objects are cofibrant, it follows that $\J_{n+1}(V,-)$ is cofibrant, as the mapping cone of  $S\gamma_n(V,-)_+ \longrightarrow \J_0(V,-)$. Hence, $\J_n(V,-) \wedge i$ is an (acyclic) cofibration in $\E_0$, and there is a Quillen adjunction
\[
\adjunction{\res_0^n/\U(n)}{\U(n)\E_n}{\E_0}{\ind_0^n\varepsilon^*},
\]
when $\U(n)\E_n$ is equipped with the projective model structure. 

Composition of left (resp. right) Quillen functors is well behaved, see \cite[\S1.3.1]{Ho99}, hence the Quillen adjunction between projective model structures extends through the Quillen adjunction 
\[
\adjunction{\mathbbm{1}}{\E_0}{n\poly\E_0}{\mathbbm{1}},
\]
to a Quillen adjunction
\[
\adjunction{\res_0^n/\U(n)}{\U(n)\E_n}{n\poly\E_0}{\ind_0^n\varepsilon^*}.
\]
The $n$-stable model structure is a left Bousfield localisation of $\U(n)\E_n$, hence to extend to a Quillen adjunction on the $n$-stable model structure it suffices by \cite[Theorem 3.1.6, Proposition 3.3.18]{Hi03} to show that $\ind_0^n \varepsilon^*$ sends fibrant objects in $n\poly\E_0$ to fibrant objects in $\U(n)\E_n$. By Proposition \ref{prop: n-poly model structure} and Proposition \ref{prop: n-stable model structure}, this reduces to showing that inflation-induction sends an $n$-polynomial object to a $n\Omega$-spectrum. This is precisely the content of Lemma \ref{derivatives of n-poly are stable}.

We now extend to the $n$-homogeneous model structure - which is a right Bousfield localisation of $n\poly\E_0$ - using \cite[Proposition 3.3.18]{Hi03}. Suppose $f: E \longrightarrow F$ is a $\mathcal{K}_n$-cellular equivalence between fibrant ($n$-polynomial) objects, then
\[
\ind_0^n\varepsilon^* E(V) = \E_0(\J_n(V,-), E ) \longrightarrow \E_0(\J_n(V,-) , F) = \ind_0^n\varepsilon^*F(V),
\]
is a weak homotopy equivalence. It follows by definition that that $\ind_0^n \varepsilon^* f$ is a levelwise weak equivalence hence a $n$-stable equivalence. An application of \cite[Proposition 3.3.18]{Hi03} yields the result.  
\end{proof}

We now give the desired Quillen equivalence. We give a different proof to that of Barnes and Oman \cite[Theorem 10.1]{BO13} for the orthogonal calculus case. We begin with an example which will be useful. The proof of which may be found in \cite[Example 6.4]{We95}.

\begin{ex}\label{Weiss 6.4} 
Let $\Theta$ be a spectrum with $\U(n)$-action. The functors $E$ and $F$, given by the formulae
\[
\begin{split}
E(U) &= [\Omega^\infty(S^{nU} \wedge \Theta)]_{h\U(n)} \\
F(U) &= \Omega^\infty[(S^{nU} \wedge \Theta)_{h\U(n)}]
\end{split}
\]
are weakly equivalent in $n\poly\E_0$, that is $T_n E \longrightarrow T_nF$ is a levelwise weak equivalence. 
\end{ex}

Recall that we denote cofibrant replacement by $Q$ and fibrant replacement by $R$. The model structures in which we are carrying our the respective (co)fibrant replacements should be clear from context. 

\begin{thm}[Theorem E]\label{n-homog and intermediate QE}
Let $n$ be a non-negative integer. The adjoint pair
\[
\adjunction{\res_0^n/\U(n)}{\U(n)\E_n}{n\homog\E_0}{\ind_0^n\varepsilon^*},
\]
is a Quillen equivalence.
\end{thm}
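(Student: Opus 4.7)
The plan is to verify the standard criterion for Quillen equivalence: the adjunction of Proposition \ref{prop: QA for differentiation} is a Quillen equivalence iff (i) the right adjoint $\ind_0^n\varepsilon^*$ reflects weak equivalences between fibrant objects, and (ii) for every cofibrant $X \in \U(n)\E_n$, the derived unit $X \longrightarrow \ind_0^n\varepsilon^*(\res_0^n/\U(n)(X))^{\mathrm{fib}}$ is an $n$-stable equivalence. Since the $n$-stable model structure on $\U(n)\E_n$ is stable and homotopically compactly generated by $\U(n)_+ \wedge n\mathbb{S}$, and since $\res_0^n/\U(n)$ is a left Quillen functor preserving homotopy colimits, it suffices to verify (ii) on this single compact generator.

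For (i), a map $f : E \longrightarrow F$ between fibrant (equivalently, $n$-polynomial) objects of $n\homog\E_0$ is a weak equivalence exactly when $\ind_0^n f$ is a levelwise weak equivalence, by the construction of $n\homog\E_0$ (Proposition \ref{n-homogeneous model structure}) combined with Proposition \ref{equiv to poly approx}. Lemma \ref{derivatives of n-poly are stable} then implies that $\ind_0^n \varepsilon^* E$ and $\ind_0^n \varepsilon^* F$ are $n\Omega$-spectra, and between $n\Omega$-spectra a levelwise weak equivalence is precisely an $n\pi_*$-isomorphism. So $\ind_0^n\varepsilon^*$ both preserves and reflects weak equivalences between fibrant objects.

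For (ii), take $X = \U(n)_+ \wedge n\mathbb{S}$. Since $\U(n)$ acts freely on $\U(n)_+$, we have
\[
\res_0^n/\U(n)(X)(V) = (\U(n)_+ \wedge S^{nV})/\U(n) \cong S^{nV}.
\]
The fibrant replacement of this functor in $n\homog\E_0$ should be identified with $V \longmapsto \Omega^\infty[(S^{nV} \wedge \mathbb{S})_{h\U(n)}]$: this target is $n$-polynomial by Example \ref{example: infinite loop polynomial} and, by Example \ref{Weiss 6.4}, is levelwise weakly equivalent to $V \longmapsto [\Omega^\infty(S^{nV} \wedge \mathbb{S})]_{h\U(n)}$, which receives the desired natural map from $V \longmapsto S^{nV}$. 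Applying $\ind_0^n\varepsilon^*$ to this homotopy-orbit model and passing through the Quillen equivalence of Theorem \ref{QE: intermediate and spectra} should recover $\U(n)_+ \wedge n\mathbb{S}$ up to $n\pi_*$-isomorphism. The main obstacle is precisely this identification: verifying that the natural transformation $V \mapsto S^{nV} \longrightarrow [\Omega^\infty(S^{nV} \wedge \mathbb{S})]_{h\U(n)}$ becomes a weak equivalence after applying $T_n$, and carefully controlling the equivariant right Kan extension $\ind_0^n$ on the homotopy-orbit formula - both of which rely on the iterative computation of the derivative provided by Proposition \ref{derivatives stable} together with the interplay between the internal and external $\U(n)$-actions in $\U(n)\E_n$.
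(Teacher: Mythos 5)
Your strategy for the unit is where the argument breaks down, in two places. First, the reduction ``it suffices to verify the derived unit on the compact generator $\U(n)_+\wedge n\mathbb{S}$'' is not justified here. That style of reduction is what the paper uses for Theorems \ref{QE: intermediate and spectra} and \ref{thm: QE of models of spectra}, but there it is legitimate because \emph{both} sides are stable model categories and the relevant derived functors are exact and preserve coproducts, so the objects on which the derived unit is an equivalence form a localizing subcategory. In the present adjunction the target is $n\homog\E_0$, which is a right Bousfield localisation of a left Bousfield localisation of a category of space-valued functors: its homotopy category is not known at this stage to be triangulated (that is essentially a consequence of the theorem you are proving), and the derived right adjoint $\ind_0^n\varepsilon^* T_n$ is not known to commute with coproducts, so the closure properties needed to pass from the generator to all cofibrant objects are unavailable. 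Second, even granting the reduction, the substantive content --- identifying the fibrant replacement $T_n(n\mathbb{S})$ with $V\mapsto\Omega^\infty[(S^{nV}\wedge\mathbb{S})_{h\U(n)}]$ and controlling $\ind_0^n$ of that homotopy-orbit functor equivariantly --- is exactly what you flag as ``the main obstacle'' and leave unverified; but that identification and derivative computation \emph{are} the proof, so the proposal as written does not establish the unit condition.

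The paper avoids both problems by a different device: by Corollary \ref{cor: intermediate and ortho spectra}, an arbitrary cofibrant $X\in\U(n)\E_n$ is $n\pi_*$-isomorphic to $(\alpha_n\circ r)^*\Theta$ for $\Theta$ an orthogonal $\Omega$-spectrum with $\U(n)$-action (namely $\Theta = \fibrep(\alpha_n\circ r)_!X$), so one may check the derived unit on all objects of this form at once, with no appeal to compact generation of the homotopy category of $n\homog\E_0$. For such objects $\res_0^n((\alpha_n\circ r)^*\Theta)/\U(n)$ is levelwise equivalent to $U\mapsto[\Omega^\infty(S^{nU}\wedge\Theta)]_{h\U(n)}$, Example \ref{Weiss 6.4} supplies the $T_n$-equivalence to $U\mapsto\Omega^\infty[(S^{nU}\wedge\Theta)_{h\U(n)}]$, and Example \ref{example: infinite loop polynomial} computes the $n$-th derivative of the latter as $U\mapsto\Omega^\infty(S^{nU}\wedge\Theta)\simeq(\alpha_n\circ r)^*\Theta(U)$ --- precisely the equivariant identification your sketch defers. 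Your argument for reflection of weak equivalences between fibrant objects is fine and agrees in substance with the paper's, but the unit step needs to be reworked along these lines (or you must first prove that $\Ho(n\homog\E_0)$ is triangulated and that the derived right adjoint is exact and coproduct-preserving, which is not easier).
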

\begin{proof}
We use \cite[Corollary 1.3.16]{Ho99}. The right adjoint reflects weak equivalences between fibrant objects. Indeed, let $f: E \longrightarrow F$ be a map in $\E_0$ such that $\ind_0^n\varepsilon^* f: \ind_0^n \varepsilon^* E \longrightarrow \ind_0^n \varepsilon^* F$ is an $n\pi_*$-isomorphism. Without loss in generality we may assume that both $E$ and $F$ are $n$-polynomial. It follows that 
\[
\ind_0^n\varepsilon^* T_n f: \ind_0^n \varepsilon^* T_nE \longrightarrow \ind_0^n \varepsilon^* T_n F,
\]
is a levelwise weak equivalence. 

It is left to show that the derived unit of the adjunction is an equivalence on cofibrant objects $X \in \U(n)\E_n$. Denote by $\Theta$ the derived image of $X$ under the Quillen equivalence between the intermediate category and the category of spectra with an action of $\U(n)$, i.e., $\Theta = (\alpha_n \circ r)_!X$. It follows that $X$ is $n$-stably equivalent to $(\alpha_n \circ r)^\ast\fibrep \Theta$. It suffices to show that the derived until is an equivalence on objects of $\U(n)\E_n$ of the form $(\alpha_n \circ r)^\ast \Theta$ with $\Theta$ a bifibrant spectrum with an action of $\U(n)$. The derived unit map in question is given by is given by 
\[
\cofrep(\alpha_n \circ r)^\ast \Theta \longrightarrow \ind_0^n \varepsilon^* T_n \res_0^n (\cofrep(\alpha_n \circ r)^\ast \Theta)/\U(n).
\]

The codomain of the unit map is levelwise weakly equivalent to the functor
\[
\ind_0^n\varepsilon^\ast T_n (E\U(n)_+ \wedge_{\U(n)} \res_0^n ((\alpha_n \circ r)^\ast \Theta)/\U(n),
\]
since by the unitary calculus version of \cite[Lemma 9.3]{BO13}, we can up to levelwise weak equivalence identify the left derived functor or the restriction-orbits functor with the functor
\[
E\U(n)_+ \wedge_{\U(n)} \res_0^n.
\]

At $U \in \J$, there is an identification
\[
(\alpha_n \circ r)^*\Theta(U) = \Theta (r(nU)) \simeq \Omega^\infty(S^{nU} \wedge \Theta),
\]
hence $\res_0^n(\cofrep(\alpha_n \circ r)^* \Theta)/\U(n)$ is levelwise weakly equivalent to the functor given by 
\[
U \longmapsto E\U(n)_+ \wedge_{\U(n)}[\Omega^\infty(S^{nU} \wedge \Theta)] = [\Omega^\infty (S^{nU} \wedge \Theta)]_{h\U(n)}.
\]
By Example \ref{Weiss 6.4} this last is $T_n$-equivalent to the functor given by 
\[
U \longmapsto [\Omega^\infty(S^{nU} \wedge \Theta)_{h\U(n)}].
\]
Example \ref{example: infinite loop polynomial} calculates the $n$-th derivative of this functor as the functor given by 
\[
U \longmapsto \Omega^\infty(S^{nU} \wedge \Theta),
\]
which in turn is levelwise weakly equivalent to $(\alpha_n \circ r)^* \Theta$, and the result follows. 
\end{proof}

\section{The classification of homogeneous functors}\label{section: classification of homogeneous functors}

We return now to discussing the Taylor tower. We have seen that the layers of the tower are homogeneous. We now show that $n$-homogeneous functors (in particular the layers of the tower) are completely determined by spectra with an action of $\U(n)$. This result is an extension of \cite[Theorem 7.3]{We95}. Here, Weiss shows that an orthogonal $n$-homogeneous functor $E$ is levelwise weakly equivalent to the functor 
\[
V \longmapsto \Omega^\infty[(S^{\R^n \otimes V} \wedge \Theta_E^n)_{hO(n)}].
\]

\subsection{Classification of homogeneous functors} The above Quillen equivalence gives an equivalence between the homotopy category of the $n$-homogeneous model structure and the homotopy category of the $n$-stable model structure. It follows that for an object $F \in \E_0$, inflation-induction and the left adjoint to $(\alpha_n \circ r)^*$ determine a spectrum $\Psi_F^n$ with an action of $\U(n)$. That is, $\Psi_F^n = (\alpha_n \circ r)_!\ind_0^n\varepsilon^*F$.

With this we can classify $n$-homogeneous functors, analogous to Weiss \cite[Theorem 7.3]{We95} and Barnes-Oman \cite[Theorem 10.3]{BO13}. The proof here - aided by the language of model categories and localisations - is significantly more straightforward that than of \cite[Theorem 7.3]{We95}, yet still rather technical. 

\begin{thm}[Theorem A]\label{characterisation of homogeneous functors}
Let $n$ be a non-negative integer. Let $F \in \E_0$ be $n$-homogeneous for some $n >0$. Then $F$ is levelwise weakly equivalent to the functor defined as 
\begin{equation*}\label{char of homog functors}
U \longmapsto \Omega^\infty [(S^{nU} \wedge \Psi_F^n)_{h\U(n)}].
\end{equation*}
\end{thm}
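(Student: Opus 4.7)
The plan is to deduce this characterisation as a consequence of the composite Quillen equivalence
\[
n\homog\E_0 \;\rightleftarrows\; \U(n)\E_n \;\rightleftarrows\; \s^\mathscr{U}[\U(n)] \;\rightleftarrows\; \s^\mathscr{O}[\U(n)]
\]
assembled from Theorem~\ref{n-homog and intermediate QE}, Theorem~\ref{QE: intermediate and spectra}, and Theorem~\ref{thm: QE of models of spectra}, combined with the identification carried out inside the proof of Theorem~\ref{n-homog and intermediate QE}. Since cofibrant replacement preserves the $n$-homogeneous weak equivalence class of $F$, I first replace $F$ by a projectively cofibrant $n$-homogeneous functor, which is then cofibrant-fibrant in $n\homog\E_0$.

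Next I would unfold the derived counit of the composite Quillen equivalence at $F$. Setting $\Psi_F^n = (\alpha_n \circ r)_!\,\ind_0^n \varepsilon^* F$ (fibrantly replaced as an orthogonal $\U(n)$-spectrum), let $X \to (\alpha_n \circ r)^* \Psi_F^n$ be a cofibrant replacement in $\U(n)\E_n$. Because each adjunction in the chain is a Quillen equivalence, the derived counit
\[
\res_0^n(X)/\U(n) \;\xlongrightarrow{\sim}\; F
\]
is a weak equivalence in $n\homog\E_0$.

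The third step is to identify the source of this counit map with the prescribed formula. Evaluating on $U \in \J_0$, one has $(\alpha_n \circ r)^*\Psi_F^n(U) = \Psi_F^n(r(nU)) \simeq \Omega^\infty(S^{nU} \wedge \Psi_F^n)$ since $\Psi_F^n$ is an orthogonal $\Omega$-spectrum. Because $X$ is cofibrant in $\U(n)\E_n$, its strict $\U(n)$-orbits model the homotopy orbits, so
\[
\res_0^n(X)(U)/\U(n) \;\simeq\; [\Omega^\infty(S^{nU} \wedge \Psi_F^n)]_{h\U(n)}.
\]
Now Example~\ref{Weiss 6.4} supplies a $T_n$-equivalence between this latter functor and $U \longmapsto \Omega^\infty[(S^{nU} \wedge \Psi_F^n)_{h\U(n)}]$, and Example~\ref{example: infinite loop polynomial} shows the target is $n$-polynomial.

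Finally, I would promote the resulting $n$-homogeneous (equivalently, $n$-polynomial) weak equivalence between $F$ and $U \longmapsto \Omega^\infty[(S^{nU} \wedge \Psi_F^n)_{h\U(n)}]$ to a levelwise weak equivalence. This uses that $T_n$ is a fibrant replacement in $n\poly\E_0$ (Proposition~\ref{prop: n-poly model structure}) and that for $n$-polynomial functors $E$ the canonical map $E \to T_n E$ is already a levelwise weak equivalence (Proposition~\ref{equiv to poly approx}); hence any $n$-polynomial weak equivalence between $n$-polynomial functors is levelwise. The main obstacle is the bookkeeping of cofibrancy and fibrancy across three Quillen equivalences, particularly ensuring that orbits compute homotopy orbits after pulling back along $\alpha_n \circ r$; almost all of this bookkeeping has already been carried out inside the proofs of Theorems~\ref{QE: intermediate and spectra} and~\ref{n-homog and intermediate QE}, so the remaining content is essentially to chain together those identifications and apply Example~\ref{Weiss 6.4} once.
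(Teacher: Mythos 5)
Your route is essentially the paper's: both proofs funnel everything through the Quillen equivalences of Theorems \ref{n-homog and intermediate QE}, \ref{QE: intermediate and spectra} and \ref{thm: QE of models of spectra} together with the two computations in Example \ref{Weiss 6.4} and Example \ref{example: infinite loop polynomial}; the paper merely packages the (co)unit computation by hand, setting $E(U)=(\ind_0^n\varepsilon^*F(U))_{h\U(n)}$ and comparing $n$-th derivatives, rather than quoting the derived counit of the composite equivalence.

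There is, however, a genuine gap in your final step. The equivalence you produce between $F$ and $G\colon U\mapsto \Omega^\infty[(S^{nU}\wedge\Psi_F^n)_{h\U(n)}]$ is a weak equivalence in $n\homog\E_0$, i.e.\ an $\ind_0^nT_n$-equivalence, and your parenthetical ``(equivalently, $n$-polynomial)'' is false: the $n$-homogeneous equivalences form a strictly larger class than the $T_n$-equivalences. For instance, any $(n-1)$-polynomial functor is $n$-polynomial and, by Corollary \ref{cor: n+1 derivative of n-poly}, the map to the constant point functor is an $n$-homogeneous equivalence, though it is almost never levelwise. So the (correct) principle you invoke --- a $T_n$-equivalence between $n$-polynomial functors is levelwise, via Propositions \ref{prop: n-poly model structure} and \ref{equiv to poly approx} --- does not apply to what you have. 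You also cannot repair this by citing Proposition \ref{char of n-homog equivs} or Corollary \ref{cofibrant objects of n-homog}, since those are proved using Theorem \ref{characterisation of homogeneous functors} itself. The non-circular fix, and what the paper's ``double application of Whitehead's Theorem'' refers to, is to first apply Whitehead's theorem for the right Bousfield localisation: your derived counit runs from the cofibrant object $\res_0^n(X)/\U(n)$ (image of a cofibrant object under a left Quillen functor) to your cofibrant-fibrant model of $F$, and an $n$-homogeneous equivalence between colocal objects is already an $n$-polynomial equivalence; only after this does your $T_n$-argument (Whitehead for the left localisation) upgrade the zig-zag between the $n$-polynomial ends $F$ and $G$ to a levelwise equivalence. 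The same care is needed at your opening move: the cofibrant replacement $QF\to F$ in $n\homog\E_0$ is a priori only an acyclic fibration of that localised structure, not a levelwise equivalence, so you must still explain why establishing the formula for $QF$ gives it for the original $F$ (again a Whitehead-type or connectivity argument, not just ``preserves the weak equivalence class'').
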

\begin{proof}
Let $F$ be $n$-homogeneous and define a new functor $E$ to be $E(U) = (\ind_0^n\varepsilon^* F(U))_{h\U(n)}$. This functor is $T_n$-equivalent to the functor $G$ defined as
\[
G(U) = \Omega^\infty[(S^{nU} \wedge \Psi_F^n)_{h\U(n)}],
\]
which is $n$-polynomial by Example \ref{example: infinite loop polynomial}. Indeed, the $T_n$-equivalence follows since 
\[
E(U) = (\ind_0^n \varepsilon^* F(U))_{h\U(n)} = (\Psi_F^n (r(nU)))_{h\U(n)} \simeq  [\Omega^\infty(S^{nU} \wedge \Psi_F^n)]_{h\U(n)}.
\]
By Example \ref{Weiss 6.4}, the functor $E$ (through the above equivalence) is in turn $T_n$-equivalent to the functor $G$
\[
U \longmapsto \Omega^\infty[(S^{nU} \wedge \Psi_F^n)_{h\U(n)}].
\] 
Since $G$ is levelwise weakly equivalent to $T_nG$, it follows that $T_nE$ is levelwise weakly equivalent to $G$.

Since $\ind_0^n \varepsilon^*$ is a right Quillen functor, $\ind_0^n \varepsilon^* T_nE$ is levelwise weakly equivalent to $\ind_0^n \varepsilon^* G$. By Example \ref{example: infinite loop polynomial}, we may identify the $n$-th derivative of $G$, $\ind_0^n \varepsilon^* G$, with the functor $G[n]$, given by
\[
U \longmapsto \Omega^\infty (S^{nU} \wedge \Psi_F^n).
\]

This last is levelwise weakly equivalent to $\ind_0^n \varepsilon^* F$, since the above functor is $n$-stably equivalent to $\ind_0^n \varepsilon^* F$ and both are fibrant in $\U(n)\E_n$.

Since $G$ is levelwise weakly equivalent to $T_nE$, it follows that $\ind_0^n\varepsilon^* T_nE$ is levelwise weakly equivalent to $\ind_0^n\varepsilon^* T_nF$. A double application of Whiteheads' Theorem \cite[Theorem 3.2.13]{Hi03},  for right and left Bousfield localisations respectively yields the result.
\end{proof}


\subsection{Characterising the $n$-homogeneous model structure} With this classification result we can further characterise the weak equivalences and cofibrations of the $n$-homogeneous model structure. This new characterisation is also true in the orthogonal calculus setting. These results are similar to those for Goodwillie calculus \cite{BR14} since the construction of the model categories are similar. However there are substantial differences, for example the (homotopy) cross effect functor plays an important role in Goodwillie calculus, and the model structures of \cite{BR14}, but has no natural analogue in our theory. 

We start with the weak equivalences. The construction of the derivative is quite complex, hence detecting weak equivalences via  $\ind_0^nT_n$ can be laborious. We show, using the classification theorem for $n$-homogeneous functors, Theorem \ref{characterisation of homogeneous functors}, that weak equivalences are detected by $D_n$, where 
\[
D_nF = \hofibre[T_nF \longrightarrow T_{n-1}F].
\]

\begin{prop}\label{char of n-homog equivs}
Let $n$ be a non-negative integer. A map $f: E \longrightarrow F$ is an $\ind_0^nT_n$-equivalence if and only if $D_nf: D_nE \longrightarrow D_nF$ is a levelwise weak equivalence.
\end{prop}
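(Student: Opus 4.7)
The plan is to reduce both conditions to the same statement about $\ind_0^n$ applied to $f$, by comparing $T_n$ and $D_n$ through the defining homotopy fibre sequence.

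First, I would establish a natural equivalence $\ind_0^n D_n G \simeq \ind_0^n T_n G$ for every $G \in \E_0$. The defining homotopy fibre sequence $D_n G \to T_n G \to T_{n-1} G$ is functorial in $G$; applying $\ind_0^n$ (which, as a right Quillen functor, preserves homotopy fibre sequences after appropriate fibrant replacement) and noting that Corollary \ref{cor: n+1 derivative of n-poly}, applied with $n$ replaced by $n-1$, gives $\ind_0^n T_{n-1} G \simeq \ast$, yields the claimed equivalence. From this I conclude that $\ind_0^n T_n f$ is a weak equivalence in $\U(n)\E_n$ if and only if $\ind_0^n D_n f$ is.

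The ``if'' direction of the proposition is now transparent: a levelwise weak equivalence $D_n f$ is sent by $\ind_0^n$ to a weak equivalence, and the previous step upgrades this to an equivalence $\ind_0^n T_n f$, so $f$ is an $\ind_0^n T_n$-equivalence.

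For the ``only if'' direction, suppose $\ind_0^n T_n f$ is a weak equivalence, so $\ind_0^n D_n f$ is an $n\pi_*$-isomorphism. Both $D_n E$ and $D_n F$ are $n$-homogeneous (per the example preceding Definition \ref{definition: homogeneous functor}). I would invoke Theorem \ref{characterisation of homogeneous functors} to identify each $D_n G$, via a zig-zag of levelwise weak equivalences, with its $\Omega^\infty[(S^{nU} \wedge \Psi^n_{D_n G})_{h\U(n)}]$-formula. Under these identifications, $D_n f$ corresponds to the map induced by an $n$-stable equivalence of spectra $\Psi^n_{D_n E} \to \Psi^n_{D_n F}$, and this formula-level map is manifestly a levelwise weak equivalence. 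Two-out-of-three along the zig-zag then forces $D_n f$ itself to be a levelwise weak equivalence.

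The principal obstacle is the naturality bookkeeping in the last step: ensuring that the zig-zag of levelwise equivalences from the classification theorem is natural enough in the map $f$ to permit two-out-of-three. This is ultimately supplied by the Quillen equivalence of Theorem \ref{n-homog and intermediate QE} together with the Quillen equivalence with spectra (Theorem \ref{QE: intermediate and spectra}), but some care with cofibrant-fibrant replacements (both in $n\homog\E_0$ and in $\U(n)\E_n$) will be required to realise the correspondence as a genuine diagram of levelwise equivalences rather than an unspecified homotopy.
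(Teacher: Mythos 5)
Your proposal is correct and follows essentially the same route as the paper: one direction via the fibre sequence $D_nG \to T_nG \to T_{n-1}G$ together with the vanishing of the $n$-th derivative of $(n-1)$-polynomial functors, and the converse by transporting the stable equivalence of classifying spectra through the classification theorem (Theorem \ref{characterisation of homogeneous functors}) back to a levelwise equivalence of the layers. Your extra attention to the naturality of the zig-zag in the converse is a reasonable refinement of a point the paper's proof treats lightly, but it is not a different method.
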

\begin{proof}
Suppose that $D_nf$ is a levelwise weak equivalence. Since $\ind_0^n$ preserves levelwise weak equivalences, $\ind_0^n D_nf$ is a levelwise weak equivalence. Moreover as the $n$-th derivative of an $(n-1)$-polynomial object is levelwise weakly contractible, $\ind_0^n T_{n-1}E$ and $\ind_0^n T_{n-1}F$ are both levelwise weakly contractible and hence levelwise weakly equivalent. The following diagram made from the homotopy fibre sequences defining $D_nE$ and $D_nF$ together with an application of the Five Lemma implies that $\ind_0^nT_n f$ is a levelwise weak equivalence.
\[
\xymatrix{
\ind_0^n D_n E \ar[r] \ar[d]_{\ind_0^n D_n f} & \ind_0^n T_n E \ar[r] \ar[d]^{\ind_0^nT_nf} & \ind_0^nT_{n-1}E \ar[d]^{\ind_0^n T_{n-1}f} \\
\ind_0^n D_n F \ar[r] & \ind_0^n T_n F  \ar[r] & \ind_0^nT_{n-1}F \\
}
\]

Conversely suppose that $\ind_0^n T_n f$ is a levelwise weak equivalence. Then the spectra $\Psi_E^n$ and $\Psi_F^n$ are stably equivalent. As such the spectra  $(S^{nV} \wedge \Psi_E^n)_{h\U(n)}$ and $(S^{nV} \wedge \Psi_F^n)_{h\U(n)}$ are stably equivalent for every $V \in \J$. Since for any spectrum $\Theta$, $\pi_n\Theta = \pi_n \Omega^\infty \Theta$, we have that $\Omega^\infty[(S^{nV} \wedge \Psi_E^n)_{h\U(n)}]$ and $\Omega^\infty[(S^{nV} \wedge \Psi_F^n)_{h\U(n)}]$ are weakly homotopy equivalent. By the classification of $n$-homogeneous functors it follows that $D_nE$ and $D_nF$ are levelwise weakly equivalent. 
\end{proof}

It is also possible to characterise the acyclic fibrations.

\begin{prop}\label{characterisation of acyclic fibs}
Let $n$ be a non-negative integer. A map $f: E \longrightarrow F$ is an acyclic fibration in the $n$-homogeneous model structure if and only if it is a fibration in the $(n-1)$-polynomial model structure and an $\ind_0^nT_n$-equivalence.
\end{prop}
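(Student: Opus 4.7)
The plan is to observe that, by definition of the $n$-homogeneous model structure, a map $f$ is an acyclic fibration there precisely when $f$ is a fibration in $n\poly\E_0$ together with an $\ind_0^n T_n$-equivalence. Since the condition ``$\ind_0^n T_n$-equivalence'' appears on both sides of the statement, it suffices to show that, for any such equivalence $f$, being an $n$-polynomial fibration and being an $(n-1)$-polynomial fibration are the same condition. Throughout I will use Proposition \ref{char of n-homog equivs}, which re-expresses $\ind_0^n T_n$-equivalences as those maps $f$ for which $D_n f$ is a levelwise weak equivalence, where $D_n F = \hofibre[T_n F \longrightarrow T_{n-1} F]$.

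First, I would assemble the commutative diagram
\[
\xymatrix{
E \ar[d]_{f} \ar[r]^{\eta_n} & T_n E \ar[d]^{T_n f} \ar[r]^{r_n} & T_{n-1} E \ar[d]^{T_{n-1} f} \\
F \ar[r]_{\eta_n} & T_n F \ar[r]_{r_n} & T_{n-1} F
}
\]
whose horizontal composites are the maps $\eta_{n-1} : E \to T_{n-1} E$ and $\eta_{n-1} : F \to T_{n-1} F$, using that $r_n \eta_n = \eta_{n-1}$. By Proposition \ref{prop: n-poly model structure}, the left-hand square being a homotopy pullback (together with $f$ a levelwise fibration) is exactly the fibration condition in $n\poly\E_0$, and the outer rectangle being a homotopy pullback (together with $f$ a levelwise fibration) is exactly the fibration condition in $(n-1)\poly\E_0$.

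Next I would show that, under the hypothesis that $f$ is an $\ind_0^n T_n$-equivalence, the right-hand square is automatically a homotopy pullback. Its horizontal homotopy fibres are $D_n E$ and $D_n F$ by the very definition of $D_n$, and the map between these fibres is $D_n f$. By Proposition \ref{char of n-homog equivs}, $D_n f$ is a levelwise weak equivalence, and hence the right-hand square is a homotopy pullback.

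Applying the pasting lemma for homotopy pullbacks, with the right-hand square a homotopy pullback, the left-hand square is a homotopy pullback if and only if the outer rectangle is. Hence, among $\ind_0^n T_n$-equivalences that are levelwise fibrations, the $n$-polynomial and $(n-1)$-polynomial fibration conditions coincide, and the proposition follows. The only subtlety is the invocation of the pasting lemma and the identification of the homotopy fibres as $D_n E$ and $D_n F$; once Proposition \ref{char of n-homog equivs} is in hand, no further difficulty arises.
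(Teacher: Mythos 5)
Your proposal is correct and follows essentially the same route as the paper: both unravel the definition of acyclic fibrations in the right Bousfield localisation, form the three-term diagram through $T_n$ and $T_{n-1}$, show the right-hand square is a homotopy pullback because the map of horizontal homotopy fibres is $D_nf$ (a levelwise equivalence by Proposition \ref{char of n-homog equivs}), and conclude by the pasting lemma that the $n$-polynomial and $(n-1)$-polynomial fibration conditions agree.
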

\begin{proof}
A map is a fibration in the $(n-1)$-polynomial model structure if and only if it is a levelwise fibration and the square 
\[
\xymatrix{
E \ar[r] \ar[d]_f & T_{n-1} E \ar[d]^{T_{n-1}f} \\
F \ar[r] & T_{n-1}F\\
}
\]
is a homotopy pullback square. 
Consider the diagram
\[
\xymatrix{
E \ar[r] \ar[d]_f & T_{n} E \ar[d]^{T_{n}f} \ar[r] & T_{n-1} E \ar[d]^{T_{n-1}f}\\
F \ar[r] & T_{n}F \ar[r] & T_{n-1}F. \\
}
\]
If $f: E \longrightarrow F$ is an acyclic fibration in the $n$-homogeneous model structure, then it is an $\ind_0^nT_n$-equivalence and a fibration in in $n$-polynomial model structure, since $n\homog\E_0$ is a right Bousfield localisation of $n\poly\E_0$. It follows that the left hand square is a homotopy pullback. The right hand square in the above diagram is also homotopy pullback since the horizontal (homotopy) fibres are levelwise equivalent, see \cite[Proposition 3.3.18]{MV15}. By the Pasting Lemma \cite[Proposition 13.3.15]{Hi03}, the outer rectangle is a homotopy pullback and hence $f$ is a fibration in the $(n-1)$-polynomial model structure. 

Conversely if $f$ is a fibration in the $(n-1)$-polynomial model structure then the outer rectangle in the above diagram is a homotopy pullback. Moreover, $f$ being an $\ind_0^nT_n$-equivalence yields the right hand square as a homotopy pullback, again by \cite[Proposition 3.3.18]{MV15}. Another application of the Pasting Lemma yields that the left hand square is a homotopy pullback and hence $f$ is a fibration in the $n$-polynomial model structure, and hence a fibration in the $n$-homogeneous model structure.
\end{proof}

This allows us to characterise the acyclic fibrations between fibrant objects. 

\begin{cor}\label{lem: acyclic fibrations in n-homog}
Let $n$ be a non-negative integer. A map $f:E \longrightarrow F$ between $n$-polynomial objects is an acyclic fibration in the $n$-homogeneous model structure if and only if it is a fibration in the $(n-1)$-polynomial model structure.
\end{cor}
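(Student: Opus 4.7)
The plan is to deduce this corollary directly from Proposition \ref{characterisation of acyclic fibs} by showing that, when restricted to maps between $n$-polynomial objects, the $\ind_0^n T_n$-equivalence condition in that proposition becomes automatic. One direction is immediate: if $f$ is an acyclic fibration in the $n$-homogeneous model structure then it is in particular a fibration in the $(n-1)$-polynomial model structure by Proposition \ref{characterisation of acyclic fibs}.

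For the converse, suppose $f: E \longrightarrow F$ is a fibration in the $(n-1)$-polynomial model structure with both $E$ and $F$ being $n$-polynomial. First I would unpack the definition: such an $f$ is a levelwise fibration for which the square
\[
\xymatrix{
E \ar[r] \ar[d]_f & T_{n-1} E \ar[d]^{T_{n-1}f} \\
F \ar[r] & T_{n-1}F\\
}
\]
is a homotopy pullback. The key step is then to identify the horizontal homotopy fibres: since $E$ and $F$ are $n$-polynomial, Proposition \ref{equiv to poly approx} provides levelwise weak equivalences $E \simeq T_nE$ and $F \simeq T_nF$, which allow one to rewrite these homotopy fibres as $D_nE$ and $D_nF$ respectively. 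The homotopy pullback property then yields that $D_nf: D_nE \longrightarrow D_nF$ is a levelwise weak equivalence. By Proposition \ref{char of n-homog equivs}, $f$ is therefore an $\ind_0^n T_n$-equivalence, and a second application of Proposition \ref{characterisation of acyclic fibs} delivers the desired conclusion that $f$ is an acyclic fibration in $n\homog\E_0$.

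The main obstacle, though mild, will be being careful about the identification of the homotopy fibre of $E \longrightarrow T_{n-1}E$ with $D_nE = \hofibre[T_nE \longrightarrow T_{n-1}E]$: this requires $T_nE \simeq E$ which is precisely Proposition \ref{equiv to poly approx} applied to the $n$-polynomial object $E$, so no new argument is needed beyond a careful invocation of the hypothesis. Once this compatibility is made explicit, the rest of the proof is a formal application of results already established in the text.
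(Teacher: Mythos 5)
Your proposal is correct and follows essentially the same route as the paper: reduce via Proposition \ref{characterisation of acyclic fibs} to showing that such an $f$ is an $\ind_0^nT_n$-equivalence, use $E \simeq T_nE$ and $F \simeq T_nF$ (Proposition \ref{equiv to poly approx}) to identify the horizontal homotopy fibres of the $(n-1)$-polynomial fibration square with $D_nE$ and $D_nF$, and conclude with Proposition \ref{char of n-homog equivs}. The paper phrases the fibre identification as the right-hand square of the two-square diagram being a homotopy pullback and cites \cite[Proposition 3.3.18]{MV15}, but the argument is the same.
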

\begin{proof}
By Proposition \ref{characterisation of acyclic fibs} it suffices to show that a fibration in $(n-1)\poly\E_0$ between $n$-polynomial objects is an $n$-homogeneous equivalence. Let $f: E \longrightarrow F$ be a fibration in $(n-1)\poly\E_0$ and $E$ and $F$, $n$-polynomial. Then we have a diagram 
\[
\xymatrix{
E \ar[r] \ar[d]_f & T_{n} E \ar[d]^{T_{n}f} \ar[r] & T_{n-1} E \ar[d]^{T_{n-1}f}\\
F \ar[r] & T_{n}F \ar[r] & T_{n-1}F. \\
}
\]
as in Proposition \ref{characterisation of acyclic fibs}. The outer square of this diagram is a homotopy pullback since $f: E \longrightarrow F$ is a fibration in $(n-1)\poly\E_0$. Since $E$ and $F$ are $n$-polynomial, the left-hand horizontal maps are levelwise equivalences and the right-hand square is a homotopy pullback. The result then follows by \cite[Proposition 3.3.18]{MV15} and Proposition \ref{char of n-homog equivs}.
\end{proof}

We now turn our attention to the cofibrations.

\begin{lem}\label{lem: cofibrations of n-homog}
Let $n$ be a non-negative integer. A map $f: X \longrightarrow Y$ is a cofibration in the $n$-homogeneous model structure if and only if it is a projective cofibration and an $(n-1)$-polynomial equivalence. 
\end{lem}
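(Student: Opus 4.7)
The plan is to prove both implications separately, leaning on Proposition~\ref{characterisation of acyclic fibs}, which identifies acyclic fibrations in $n\homog\E_0$ as fibrations in $(n-1)\poly\E_0$ that are $\ind_0^nT_n$-equivalences, together with the observation (already used implicitly in Corollary~\ref{lem: acyclic fibrations in n-homog}) that a fibration in $(n-1)\poly\E_0$ whose $T_{n-1}$ is a levelwise equivalence is itself a levelwise equivalence.

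For the backward direction, suppose $f$ is a projective cofibration and a $T_{n-1}$-equivalence, and let $g: E \to F$ be an acyclic fibration in $n\homog\E_0$. Factor $f$ in $(n-1)\poly\E_0$ as $f = q \circ j$ with $j: X \to Z$ an acyclic cofibration (i.e.\ a projective cofibration and $T_{n-1}$-equivalence) and $q: Z \to Y$ a fibration. By $2$-out-of-$3$, $q$ is a $T_{n-1}$-equivalence, and, being a fibration in $(n-1)\poly\E_0$, is therefore a levelwise acyclic fibration. Any commuting square $(X \to E, Y \to F)$ now splits into two lifting problems: first lift $j$ against $g$ (using that $g$ is a fibration in $(n-1)\poly\E_0$ by Proposition~\ref{characterisation of acyclic fibs}) to obtain $\gamma: Z \to E$, then lift $f$ against the projective acyclic fibration $q$ to obtain $\ell: Y \to Z$, and the composite $\gamma\ell$ fills the original square.

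For the forward direction, if $f$ is a cofibration in $n\homog\E_0$ then it is certainly a projective cofibration: any levelwise acyclic fibration is automatically a fibration in $(n-1)\poly\E_0$ and an $\ind_0^nT_n$-equivalence, hence an acyclic fibration in $n\homog\E_0$ by Proposition~\ref{characterisation of acyclic fibs}. To show $T_{n-1}f$ is a levelwise weak equivalence, I factor $T_{n-1}f$ levelwise through the standard mapping path object $T_{n-1}X \xrightarrow{\sim} P \twoheadrightarrow T_{n-1}Y$. Since $P$ is levelwise equivalent to the $(n-1)$-polynomial functor $T_{n-1}X$, it is itself $(n-1)$-polynomial, so $P \twoheadrightarrow T_{n-1}Y$ is a levelwise fibration between $(n-1)$-polynomial functors. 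Its $T_{n-1}$-square is a homotopy pullback (trivially) and both its source and target have vanishing $D_n$, so Proposition~\ref{characterisation of acyclic fibs} tells us it is an acyclic fibration in $n\homog\E_0$. The LLP provides a filler $\ell: Y \to P$ of the square whose top edge is $X \xrightarrow{\eta_{n-1}^X} T_{n-1}X \xrightarrow{\sim} P$ and whose bottom edge is $\eta_{n-1}^Y$. Applying $T_{n-1}$ and exploiting the natural equivalences $T_{n-1}T_{n-1}(-) \simeq T_{n-1}(-)$, the two identities satisfied by $\ell$ translate into a map $T_{n-1}Y \to T_{n-1}X$ that is simultaneously a homotopy left and right inverse of $T_{n-1}f$, so $T_{n-1}f$ is a levelwise weak equivalence.

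The main obstacle is the forward direction, specifically extracting a two-sided homotopy inverse to $T_{n-1}f$ from the single diagonal filler $\ell$. One must carefully track the naturality of $\eta_{n-1}$ and the idempotence-up-to-equivalence of $T_{n-1}$ to see that the two relations $\ell f = \iota\,\eta^X$ and $(P \twoheadrightarrow T_{n-1}Y)\ell = \eta^Y$ (where $\iota$ is the levelwise acyclic cofibration $T_{n-1}X \to P$) give, after applying $T_{n-1}$, exactly the two composites required for a homotopy inverse; the backward direction is then a standard two-step lifting argument.
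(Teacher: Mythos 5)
Your proof is correct, but it takes a genuinely different route from the paper. The paper's argument is purely formal: by right properness of $n\homog\E_0$ and \cite[Proposition 13.2.1]{Hi03}, cofibrations are detected by the left lifting property against acyclic fibrations between fibrant objects, and Corollary \ref{lem: acyclic fibrations in n-homog} identifies these with $(n-1)$-polynomial fibrations, so the cofibrations of $n\homog\E_0$ are exactly the acyclic cofibrations of $(n-1)\poly\E_0$. You avoid right properness and the Hirschhorn detection result entirely: for the ``if'' direction you factor $f$ in $(n-1)\poly\E_0$ and solve two lifting problems (using Proposition \ref{characterisation of acyclic fibs} and the fact that a $(n-1)$-polynomial fibration which is a $T_{n-1}$-equivalence is a levelwise acyclic fibration), and for the ``only if'' direction you lift $f$ against the mapping path fibration of $T_{n-1}f$, which is an acyclic fibration in $n\homog\E_0$ because both ends are $(n-1)$-polynomial (this is exactly Corollary \ref{lem: acyclic fibrations in n-homog}, or your $D_n$-vanishing argument via Proposition \ref{char of n-homog equivs}), and then extract a two-sided homotopy inverse of $T_{n-1}f$. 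That last step does go through: the two lifting identities show, after applying $T_{n-1}$, that $T_{n-1}\ell$ admits both a right inverse (from $\ell f = \iota\eta^X$, since $T_{n-1}\iota$ and $T_{n-1}\eta^X$ are levelwise equivalences) and a left inverse (from $p\ell = \eta^Y$) in the homotopy category, hence is invertible, and then so is $T_{n-1}f$; note that this uses $T_{n-1}\eta^X$ being an equivalence, which is one of the Bousfield--Friedlander axioms underlying Proposition \ref{prop: n-poly model structure}, so it is available. The trade-off: the paper's proof is shorter and immediately yields the cleaner statement that $n$-homogeneous cofibrations are precisely the maps with the left lifting property against all $(n-1)$-polynomial fibrations, but it leans on right properness of the cellularization and a quoted localisation result; your proof is longer but more elementary and self-contained, using only Proposition \ref{characterisation of acyclic fibs}, Corollary \ref{lem: acyclic fibrations in n-homog}, and the idempotence-up-to-equivalence of $T_{n-1}$.
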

\begin{proof}
By definition, $f: X \longrightarrow Y$ is a $n$-homogeneous cofibration if and only if it has the left lifting property with respect to $n$-homogeneous acyclic fibrations. By right properness of $n\homog\E_0$ and \cite[Proposition 13.2.1]{Hi03}, this is equivalent to $f: X \longrightarrow Y$ having the left lifting property with respect to acyclic fibrations between fibrant objects. By Lemma \ref{lem: acyclic fibrations in n-homog} this is equivalent to having the left lifting property with respect to fibrations in the $(n-1)$-polynomial model structure. It follows that $f: X \longrightarrow Y$ is a cofibration in $n\homog\E_0$ if and only if it is an acyclic cofibration in $(n-1)\poly\E_0$, that is, if and only if it is a projective cofibration and an $(n-1)$-polynomial equivalence. 
\end{proof}

\begin{cor}\label{cofibrant objects of n-homog}
Let $n$ be a non-negative integer. The cofibrant objects of the $n$-homogeneous model structure are precisely those $n$-reduced projectively cofibrant objects.
\end{cor}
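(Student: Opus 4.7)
The plan is to simply unpack the definition of a cofibrant object using Lemma \ref{lem: cofibrations of n-homog}, which has already characterised the $n$-homogeneous cofibrations. Recall that $\E_0$ consists of $\T$-enriched functors to based spaces, so the initial object is the constant functor $\ast$ (the one-point functor), and an object $X$ is cofibrant in $n\homog\E_0$ precisely when the unique map $\ast \longrightarrow X$ is a cofibration in $n\homog\E_0$.

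First I would apply Lemma \ref{lem: cofibrations of n-homog} to the map $\ast \longrightarrow X$. That lemma says this map is an $n$-homogeneous cofibration if and only if it is both a projective cofibration and an $(n-1)$-polynomial equivalence. The first of these conditions is precisely the statement that $X$ is projectively cofibrant, so the only remaining point is to identify the second condition with $n$-reducedness.

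For that step, observe that the $(n-1)$-polynomial model structure detects weak equivalences via the functor $T_{n-1}$. Thus $\ast \longrightarrow X$ is an $(n-1)$-polynomial equivalence if and only if $T_{n-1}(\ast) \longrightarrow T_{n-1}X$ is a levelwise weak equivalence. Since $\tau_{n-1}$ (being a homotopy limit of a constant diagram on $\ast$) preserves the one-point functor and filtered homotopy colimits do too, we have $T_{n-1}(\ast) \simeq \ast$ levelwise. Hence the condition is equivalent to $T_{n-1}X$ being levelwise weakly contractible, which by Definition \ref{definition: homogeneous functor} is exactly the statement that $X$ is $n$-reduced.

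Combining these two equivalences yields the corollary. There is no real obstacle here: once Lemma \ref{lem: cofibrations of n-homog} is in place, the result is an immediate specialisation to the case where the domain of the cofibration is the initial object, together with the trivial observation that $T_{n-1}$ preserves the point.
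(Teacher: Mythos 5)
Your proof is correct and is essentially the paper's own argument: the paper likewise proves this by applying Lemma \ref{lem: cofibrations of n-homog} to the map $\ast \longrightarrow E$, with your unpacking of the $(n-1)$-polynomial equivalence condition as $n$-reducedness being the (implicit) remaining step. Nothing is missing.
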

\begin{proof}
Let $E$ be cofibrant in $n\homog\E_0$ and apply Lemma \ref{lem: cofibrations of n-homog} to the map $\ast \longrightarrow E$. 
\end{proof}

\subsection{The complete Taylor tower}
The description of the $n$-homogeneous functors in particular gives a description of the layers of the Taylor tower. Hence at $U \in \J_0$, the Taylor tower of $F\in\E_0$ is
\[
\xymatrix@R-.5cm@C+2cm{
   & \vdots \ar[d]^{r_{n+1}} &\\
   & T_n F(U) \ar[d]^{r_n} &  \ar[l] \Omega^\infty[(S^{nU} \wedge \Psi_F^n)_{h\U(n)}] \\
   & T_{n-1}F(U) \ar[d]^{r_{n-1}}  &  \ar[l] \Omega^\infty[(S^{(n-1)U} \wedge \Psi_F^{n-1})_{h\U(n-1)}]\\
   & \vdots  \ar[d]^{r_2} &  \\ 
   &T_1 F(U) \ar[d]^{r_{1}}  &  \ar[l] \Omega^\infty[(S^U \wedge \Psi_F^1)_{h\U(1)}]\\
F(U) \ar@/^2pc/[uuuur]  \ar@/^1pc/[uuur]   \ar@/^/[ur] \ar[r] & F(\C^\infty).  &\\
}
\]

\section{Analyticity and convergence}\label{section: convergence}

One of the big questions in any version of functor calculus is that of convergence of the Taylor tower. This question is really two-fold; does the tower converge? And, if so, what is does the tower converge to? 

\begin{definition}
The Taylor tower of a functor $F \in \E_0$ \textit{converges at $U \in \J_0$} if the induced map 
\[
F(U) \longrightarrow \underset{n}{\holim} T_nF(U)
\]
is a weak homotopy equivalence. We say that $F$ is \textit{weakly $\rho$-analytic} if its Taylor tower converges at $U$ with $\dim(U) \geq \rho$. 
\end{definition}

Before we talk about special classes of functors for which the Taylor tower is known to converge, there is a classical approach. Since we have a homotopy fibre sequence 
\[
\Omega^\infty[(S^{nU} \wedge \Psi_F^n)_{h\U(n)}] \longrightarrow T_nF(U) \longrightarrow T_{n-1}F(U)
\]
 for all $U \in \J_0$, we can apply \cite[Section IX.4]{BK72}.

\begin{definition}
Let $F \in \E_0$. The \textit{Weiss spectral sequence} associated to F at $U \in \J_0$ is the homotopy spectral sequence of the tower of pointed spaces $\{T_nF(U)\}_{n \in \mathbb{N}}$ with $E_1$-page
\[
E_{s,t}^1 \cong \pi_{t-s} \Omega^\infty[(S^{sU} \wedge \Psi_F^s)_{h\U(s)}] \cong \pi_{t-s}(S^{sU} \wedge \Psi_F^s)_{h\U(s)},
\]
and abuts to
\[
\pi_* \underset{n}{\holim} T_nF(U).
\]
\end{definition}

This spectral sequence indicates why studying the layers of the tower is important, a firm grasp of the layers gives a firm grasp on the spectral sequence and hence on its limit. 

\begin{rem}
Bousfield and Kan \cite{BK72} provide a method of comparison between this spectral sequence for different towers of fibrations. It would be interesting to see how this Bousfield-Kan map of spectral sequences interacts with the comparison functors between orthogonal and unitary calculi. 
\end{rem}

\subsection{Agreement} We start with the notion of agreement to order $n$. This is the unitary version of order $n$ agreement from Goodwillie calculus \cite[Definition 1.2]{Go03}. These connectivity estimates played a crucial role in both Goodwillie calculus \cite[Proposition 1.17]{Go90} and orthogonal calculus \cite{We98}. 

\begin{definition}\label{unitary agreement}
Let $n$ be a non-negative integer. A map $p: F \longrightarrow G$ in $\E_0$ is \textit{an order $n$ unitary agreement} if there is some $\rho \in \mathbb{N}$ and $b \in \Z$ such that $p_U: F(U) \longrightarrow G(U)$ is $(2(n+1)\dim(U)-b)$-connected for all $U \in \J_0$, satisfying $\dim(U) \geq \rho$. We will say that \textit{$F$ agrees with $G$ to order $n$} if there is an order $n$ unitary agreement $p: F \longrightarrow G$ between them.
\end{definition}

When two functors agree to a given order, their Taylor tower agree to a prescribed level. The first result in that direction is the unitary analogue of \cite[Lemma e.3]{We98}.

\begin{lem}\label{connected argument}
Let $n$ be a non-negative integer and let $p : G \longrightarrow F$ be a map in $\E_0$. Suppose that there is $b \in \Z$ such that $p_U: G(U) \longrightarrow F(U)$ is $(2(n+1)\dim(U) - b)$-connected for all $U \in \J_0$. Then
\[
\tau_n(p)_U : \tau_n (G(U)) \longrightarrow \tau_n(F(U))
\] 
is $(2(n+1)\dim(U) -b +1)$-connected for all $U \in \J_0$. 
\end{lem}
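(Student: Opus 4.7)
The plan is to adapt the argument of Weiss's \cite[Lemma e.3]{We98} to the complex setting, replacing real Grassmannians with complex ones. First, I would rewrite the map $\tau_n(p)_V : \tau_n(G(V)) \to \tau_n(F(V))$ as the map of homotopy limits induced by $p_{V \oplus -} : G(V \oplus -) \to F(V \oplus -)$, indexed over the category $\mathcal{C}$ internal to $\T$ of non-zero complex subspaces of $\C^{n+1}$. The object space of $\mathcal{C}$ decomposes as a disjoint union of complex Grassmannians $Gr_k(\C^{n+1})$ of real dimension $2k(n+1-k)$ for $k = 1, \ldots, n+1$, and the morphism space is a union of complex flag varieties.

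Next I would obtain a stratum-wise connectivity estimate. For $U \subseteq \C^{n+1}$ of complex dimension $k$, the hypothesis gives that $p_{V \oplus U}$ is $(2(n+1)(\dim(V) + k) - b)$-connected. Using the standard principle that a map of fibrations parametrised by a finite-dimensional base of dimension $d$ whose fibrewise map is $c$-connected induces a $(c-d)$-connected map on spaces of sections (applied in the internal-category setting via the bar construction of Hollender--Vogt \cite{HV92} and Lind \cite{Lin09}), the contribution of the $k$-th stratum is at least
\[
2(n+1)(\dim V + k) - b - 2k(n+1-k) \;=\; 2(n+1)\dim V - b + 2k^2.
\]
This quantity is minimised at $k = 1$, giving a lower bound of $2(n+1)\dim V - b + 2$.

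To assemble the strata into the full homotopy limit, I would filter $\mathcal{C}$ by the categories $\mathcal{C}_k$ of subspaces of complex dimension $\leq k$ and induct, combining the stratum-wise bound with the long exact sequences comparing $\holim_{\mathcal{C}_k}$ and $\holim_{\mathcal{C}_{k-1}}$. The topology of composable flags in the simplicial nerve of $\mathcal{C}$ can contribute a further loss, but since the naive stratum estimate gives a gain of $+2$ while the lemma only asks for a gain of $+1$, there is a one-unit slack which absorbs any such correction. The conclusion is that $\tau_n(p)_V$ is $(2(n+1)\dim V - b + 1)$-connected, as required.

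The main technical obstacle is justifying the connectivity estimates for homotopy limits over a category internal to $\T$, as opposed to a discrete category, since one must simultaneously control the parameter space of objects and the topology of the morphism space. Fortunately the difference between the complex case and the orthogonal case is entirely captured by replacing $k(n+1-k)$ with $2k(n+1-k)$, and the fact that the baseline connectivity hypothesis also carries a compensating factor of two ensures that the Weiss proof transfers essentially verbatim, with the same $+1$ improvement on connectivity.
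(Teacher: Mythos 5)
The paper does not actually write out a proof of this lemma: it invokes \cite[Lemma e.3]{We98} and asserts that the argument transfers with real dimensions replaced by their complex doubles. Your stratum-by-stratum bookkeeping is exactly the observation that makes that transfer work: the hypothesis carries a factor $2(n+1)$, the Grassmannian of complex $k$-planes has real dimension $2k(n+1-k)$, and the fibrewise-minus-base estimate for section spaces gives $2(n+1)\dim V - b + 2k^2$, minimised to $+2$ at $k=1$. That part is correct and is in the spirit of what the paper (implicitly, via Weiss) does.

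The gap is in your assembly step. The claim that ``the naive stratum estimate gives $+2$, the lemma only needs $+1$, so a one-unit slack absorbs any correction coming from the topology of composable flags'' is not a valid principle, and taken at face value it is false: the corrections coming from flags are not bounded by one unit. For instance, if you compute $\holim_{\mathcal{C}}$ via the cobar construction and filter by simplicial degree, the cells indexed by complete flags in $\C^{n+1}$ contribute a loss of order $n(n+1)$, which swamps any fixed slack. What actually saves the argument is different, and it is the real content of the inductive comparison of $\holim_{\mathcal{C}_k}$ with $\holim_{\mathcal{C}_{k-1}}$ that you only gesture at: when you adjoin the dimension-$k$ subspaces as maximal elements, the comparison term is the space of sections over $G_k(\C^{n+1})$ of $U \longmapsto \Map\bigl(|N(\mathcal{C}_{<U})|, F(V \oplus U)\bigr)$, where $\mathcal{C}_{<U}$ is the topological poset of proper non-zero subspaces of $U \cong \C^k$. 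The loss here is the dimension of this nerve, which is of order $k^2$, but it is paid into a target $F(V \oplus U)$ whose connectivity has gained $2(n+1)k$; since $k \leq n+1$, the gain dominates the loss at every stratum $k \geq 2$, and at the critical stratum $k=1$ the poset $\mathcal{C}_{<U}$ is empty, so no correction arises there at all. Running this induction (with the standard fact that a map of homotopy pullback squares whose corner maps are $a$-, $b$- and $c$-connected, $c$ at the common target, induces a $\min(a,b,c-1)$-connected map of pullbacks) does yield the stated $(2(n+1)\dim V - b + 1)$-connectivity, indeed with room to spare in the unitary case; but this quantitative control over the flag contributions must be carried out explicitly rather than absorbed into a one-unit slack, and it is precisely the point where Weiss's erratum argument does its work.
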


Iterating this result, gives the following. 

\begin{lem}\label{agreement gives agreeing polynomials for unitary}
Let $n$ be a non-negative integer. If $p : F \longrightarrow G$ is an order $n$ unitary agreement, then $T_k F \longrightarrow T_k G$ is a levelwise weak equivalence for $k \leq n$.
\end{lem}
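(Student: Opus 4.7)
The plan is to iterate Lemma \ref{connected argument} and then pass to the sequential homotopy colimit defining $T_k$. Fix $k \leq n$ throughout. The first move is to upgrade the hypothesis to the uniform connectivity estimate required by Lemma \ref{connected argument}: the definition of an order $n$ unitary agreement only guarantees $(2(n+1)\dim U - b)$-connectivity of $p_U$ for $\dim U \geq \rho$, whereas Lemma \ref{connected argument} asks for a bound at every $U \in \J_0$. This is handled by enlarging $b$, for instance by replacing it with $\max\{b,\, 2(n+1)\rho\}$: the resulting integer $2(n+1)\dim U - b$ is negative for every $U$ with $\dim U < \rho$, so the connectivity condition becomes vacuous on those $U$, and the original condition is preserved (since we only increased $b$) for $\dim U \geq \rho$.

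Next, since $2(n+1)\dim U - b \geq 2(k+1)\dim U - b$ for every $U \in \J_0$, being $(2(n+1)\dim U - b)$-connected implies $(2(k+1)\dim U - b)$-connected. We are therefore in position to apply Lemma \ref{connected argument} with $n$ replaced by $k$, which yields that $\tau_k(p)_U$ is $(2(k+1)\dim U - b + 1)$-connected for all $U \in \J_0$. Since the conclusion has exactly the same form as the hypothesis (only $b$ has decreased by $1$), induction on $j \geq 0$ gives that $\tau_k^j(p)_U$ is $(2(k+1)\dim U - b + j)$-connected for every $U \in \J_0$.

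Finally, by Definition \ref{definition: polynomial approximation}, $T_k F(U) = \hocolim_j \tau_k^j F(U)$ is a sequential homotopy colimit, and $T_k(p)_U$ is the induced map on homotopy colimits. Because the connectivity of $\tau_k^j(p)_U$ tends to infinity as $j \to \infty$ (uniformly in $U$), and because homotopy groups of pointed spaces commute with sequential homotopy colimits, $T_k(p)_U$ is a weak homotopy equivalence for every $U \in \J_0$. This is the desired statement.

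I do not anticipate any serious obstacle, as the argument is essentially the Weiss-style iteration of a one-step connectivity estimate. The only genuinely careful point is the reduction in the first paragraph, since Lemma \ref{connected argument} is formulated with a connectivity hypothesis valid over all of $\J_0$ while order $n$ agreement only provides it above dimension $\rho$; the rest is a routine induction followed by the observation that maps of sequential homotopy colimits with unboundedly increasing connectivity are weak equivalences.
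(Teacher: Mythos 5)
Your proof is correct and follows essentially the same route as the paper's: iterate Lemma \ref{connected argument} (applied with $k$ in place of $n$, using that the order-$n$ connectivity estimate implies the order-$k$ one) and then pass to the sequential homotopy colimit defining $T_k$, where the unboundedly increasing connectivity gives a levelwise weak equivalence. Your preliminary enlargement of $b$ to make the hypothesis of Lemma \ref{connected argument} vacuous below dimension $\rho$ is a careful touch the paper leaves implicit, but it does not constitute a different argument.
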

\begin{proof}
If $p: F \longrightarrow G$ is an order $n$ unitary agreement, then by Lemma \ref{connected argument},
\[
\tau_n(p)_U: \tau_n F(U) \longrightarrow \tau_n G(U)
\]
is $(2(n+1)\dim(U) -b+1)$-connected. Repeated application of Lemma \ref{connected argument} yields the result for $k=n$ since $T_kF(U) = \hocolim_i \tau_k^i F(U)$. The result follows for $k<n$ since if a map $f: X \longrightarrow Y$ is $k$-connected, then it is $(k-1)$-connected. 
\end{proof}

\begin{example}\label{nS n-reduced}
The functor $n\mathbb{S} : \J_0 \longrightarrow \T$ is $k$-reduced for all $k \leq n$. Indeed, the map $\ast \longrightarrow S^{nU}$ is $(2n\dim (U) -1)$-connected for all $U \in \J_0$, hence by Lemma \ref{connected argument}, $\tau_{n-1}(\ast) \longrightarrow \tau_{n-1}(n\mathbb{S}(U))$ is $(2n\dim(U))$-connected for all $V \in \J$. It follows from Lemma \ref{agreement gives agreeing polynomials for unitary} that $\ast \simeq T_{n-1}(\ast) \longrightarrow (T_{n-1}n\mathbb{S})(U)$ is a weak homotopy equivalence. The result for $k \leq n-1$ follows since a $k$-connected map is $k-1$-connected. 
\end{example}

The connectivity estimate of Lemma \ref{connected argument} gives conditions for the Taylor tower to be trivial to a prescribed level. 

\begin{lem}\label{lemma: connected level gives trivial tower}
Let $n$ be a non-negative integer. If $F \in \E_0$ is such that $F(U)$ has connectivity $(2(n+1)\dim U -b)$ for some constant $b$, then the Taylor tower of $F(U)$ is trivial up to and including level $n$. 
\end{lem}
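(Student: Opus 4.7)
The plan is to apply Lemma \ref{agreement gives agreeing polynomials for unitary} to the unique map $\ast \longrightarrow F$. First, I would observe that the hypothesis on the connectivity of $F(U)$ directly translates into a statement about the connectivity of this map: if $F(U)$ is $(2(n+1)\dim U - b)$-connected, then the map $\ast_U \longrightarrow F(U)$ is $(2(n+1)\dim U - b)$-connected (up to an unimportant shift by $1$ in $b$ depending on the connectivity convention). Since this bound holds for all $U \in \J_0$ and is of the required form $2(n+1)\dim(U) - b'$ for a constant $b'$, this exhibits $\ast \longrightarrow F$ as an order $n$ unitary agreement in the sense of Definition \ref{unitary agreement}; note that $\rho$ may be taken to be $0$ since the estimate holds for all $U$.

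Next, I would invoke Lemma \ref{agreement gives agreeing polynomials for unitary}, which tells us that for every $k \leq n$ the induced map
\[
T_k(\ast) \longrightarrow T_k F
\]
is a levelwise weak equivalence. Since $T_k$ preserves the terminal object (the iterated homotopy limit defining $\tau_k^i$ of a constant functor is again the same constant, and the filtered homotopy colimit of the constant diagram on $\ast$ is still $\ast$), we have $T_k(\ast) \simeq \ast$. Therefore $T_k F(U) \simeq \ast$ for all $U$ and all $k \leq n$, which is precisely the statement that the Taylor tower of $F$ is trivial up to and including level $n$.

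The argument is essentially a direct corollary of the previous two lemmas, so there is no real obstacle; the only minor bookkeeping point is matching the connectivity convention so that $F(U)$ being $(2(n+1)\dim U - b)$-connected produces a map $\ast \longrightarrow F(U)$ whose connectivity satisfies the hypothesis of Definition \ref{unitary agreement} with an adjusted constant. This kind of off-by-one adjustment is absorbed into the constant $b$ and does not affect the conclusion.
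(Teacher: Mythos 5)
Your proposal is correct and is essentially the paper's own argument: the paper likewise treats the connectivity hypothesis as saying $\ast \longrightarrow F(U)$ is $(2(n+1)\dim(U)-b)$-connected and then applies Lemma \ref{agreement gives agreeing polynomials for unitary} to conclude $\ast \simeq T_kF(U)$ for $k \leq n$. The bookkeeping remark about the connectivity convention is fine and, as you say, is absorbed into the constant $b$.
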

\begin{proof}
If $F(U)$ has such a connectivity, then the map $\ast \longrightarrow F(U)$ is $(2(n+1)\dim(U) -b)$-connected. An application of Lemma \ref{agreement gives agreeing polynomials for unitary} yields that $\ast \longrightarrow T_k F(U)$ is a weak homotopy equivalence for all $k \leq n$. 
\end{proof}

\begin{ex}
The $n$-sphere functor $n\mathbb{S} : U \longmapsto S^{nU}$ satisfied the conditions of Lemma \ref{lemma: connected level gives trivial tower}, as $S^{nU}$ is $(2n\dim(U) -1)$-connected. Hence the first non-trivial polynomial approximation to $n\mathbb{S}$ is the $n$-th approximation $T_nn\mathbb{S}$. 
\end{ex}

Agreement with the $n$-polynomial approximation functor for all $n\geq 0$ gives convergence of the Taylor tower. 

\begin{lem}
If for all $n\geq 0$, a unitary functor $F$ agrees with $T_nF$ to order $n$ then the Taylor tower associated to $F$ converges to $F(U)$ at $U$ with $\dim(U) \geq \rho$.
\end{lem}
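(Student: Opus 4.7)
The plan is to show that for $U \in \J_0$ with $\dim(U)\ge\rho$, the canonical map $F(U) \longrightarrow \holim_n T_nF(U)$ induces an isomorphism on $\pi_k$ for every $k\ge 0$. The two ingredients are the connectivity estimates provided by the agreement hypothesis and the Milnor $\lim^1$ short exact sequence for the homotopy groups of a homotopy limit of a tower of pointed spaces.

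First I would fix $U$ with $\dim(U)\ge\rho$ and unpack the hypothesis: for each $n\ge 0$ there is a constant $b_n\in\Z$ such that the natural map $\eta_n: F(U)\longrightarrow T_nF(U)$ is $(2(n+1)\dim(U)-b_n)$-connected. The key observation is that, for fixed $k$ and $\dim(U)\ge\rho\ge 1$, the connectivity bound $2(n+1)\dim(U)-b_n$ exceeds $k+1$ for all sufficiently large $n$ (since the agreement holds for all $n$, the sequence of constants $b_n$ is controlled and the linear-in-$n$ term $2(n+1)\dim(U)$ dominates). In particular, for each $k$ there exists $N=N(k,U)$ such that $\eta_n$ is a $\pi_k$-isomorphism and a $\pi_{k+1}$-surjection for all $n\ge N$.

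Second I would invoke the Milnor short exact sequence
\[
0\longrightarrow \lim\nolimits^1_n \pi_{k+1}T_nF(U)\longrightarrow \pi_k\holim_n T_nF(U)\longrightarrow \lim_n \pi_k T_nF(U)\longrightarrow 0,
\]
valid since each $T_nF(U)$ is a pointed space and the transition maps $r_n$ are obtained by factoring $\eta_{n-1}$ through $\eta_n$. Because $\eta_n$ is a $\pi_k$-isomorphism for $n\ge N$, the transition maps $\pi_k T_nF(U)\to \pi_k T_{n-1}F(U)$ are isomorphisms for $n$ large, so the tower $\{\pi_k T_nF(U)\}_n$ stabilises with limit $\pi_k F(U)$. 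The analogous eventual-stabilisation for $\pi_{k+1}$ gives the Mittag-Leffler condition, forcing the $\lim^1$ term to vanish.

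Combining these, the composite $\pi_k F(U)\to \pi_k\holim_n T_nF(U)\to \lim_n \pi_k T_nF(U)\cong \pi_k F(U)$ is the identity, and the right-hand map is an isomorphism, so the left-hand map is an isomorphism for every $k$. This shows $F(U)\longrightarrow \holim_n T_nF(U)$ is a weak homotopy equivalence, which is the definition of convergence of the Taylor tower at $U$. The main obstacle is purely bookkeeping: ensuring that the constants $b_n$ furnished by the definition of order $n$ unitary agreement are compatible across $n$ so that the connectivity of $\eta_n$ really does tend to infinity; once that is in hand the rest of the argument is a standard Milnor sequence computation.
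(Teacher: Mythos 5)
Your argument is correct and is essentially the paper's proof spelled out: the paper simply notes that the agreement hypothesis makes $\eta_n : F(U) \to T_nF(U)$ into maps whose connectivity $2(n+1)\dim(U)-b$ tends to infinity and then asserts the weak equivalence $F(U) \to \holim_n T_nF(U)$, the standard justification being exactly your Milnor $\lim^1$/Mittag--Leffler computation. Your bookkeeping worry about the constants $b_n$ is resolved the same way the paper resolves it, namely by reading the hypothesis with a single constant $b$ (as in the paper's proof and in the definition of weakly polynomial), so nothing further is needed.
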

\begin{proof}
Since $F$ agrees with $T_nF$ to order $n$ for all $n$, the map $\eta : F(U) \longrightarrow T_nF(U)$ is $(2(n+1)\dim(U) -b)$-connected for all $n$ and all $U$ with $\dim U \geq \rho$. It follows that the map $F(U) \longrightarrow \holim T_nF(U)$ is a weak homotopy equivalence. 
\end{proof}

\subsection{Weakly Polynomial} The class of functors with this property are called weakly polynomial. They are an important class of functors in that they are weakly analytic, but defined in simpler terms. In particular weakly polynomial functors are more tractable for computations.  

\begin{definition}\label{def: weak poly}
Let $n$ be a non-negative integer. A unitary functor $F$ is \textit{weakly $(\rho,n)$-polynomial} if the map $\eta : F(U) \longrightarrow T_nF(U)$ is an agreement of order $n$ whenever $\dim(U) \geq \rho$. A functor is \textit{weakly polynomial} if it is weakly $(\rho,n)$-polynomial for all $n\geq 0$. 
\end{definition}

We start with a few examples. 

\begin{ex}\label{example: sphere functor analytic}
The sphere functor $\mathbb{S}: U \longmapsto S^U$ is weakly polynomial.
\end{ex}
\begin{proof}
The identity functor $\mathrm{Id}: \T \longrightarrow \T$ is $1$-analytic in the Goodwillie sense, \cite[Example 4.3]{Go91}. Barnes and Eldred \cite[Example 3.7]{BE16} shows that $S = S^*\mathrm{Id}$ is weakly $2$-polynomial, specifically, that the map $S^*\mathrm{Id}(V) \longrightarrow T_n S^*\mathrm{Id}(V)$ is an agreement of order $n$ for all $n\geq0$, where $S: V \longmapsto S^V$ is the orthogonal sphere functor. Their proof may be extended to the unitary calculus case giving that $\mathbb{S} = \mathbb{S}^*\mathrm{Id}$ is weakly polynomial for $\dim(V) \geq 1$.
\end{proof}

\begin{ex}\label{example: shifted spheres analytic}
Fix a constant $k \geq 0$. Then the functor given by $V \longmapsto S^{U+2k}$ is weakly polynomial. This follows from Example \ref{example: sphere functor analytic} since $V \longmapsto S^{U +2k}$ is equivalent to $\mathbb{S}(U\oplus \C^k)$. 
\end{ex}

The following result is an alteration of \cite[Theorem 4.1]{BE16} for weakly polynomial functors.

\begin{thm}[Theorem G]\label{quasi thm} 
Let $E, F\in \E_0$ are such that there is a homotopy fibre sequence 
\[
\xymatrix{
E(U) \ar[r] & F(U) \ar[r] &  F(U \oplus V)
}
\]
for $U,V \in \J$. Then 
\begin{enumerate}
\item If $F$ is weakly $(\rho,n)$-polynomial, then $E$ is weakly $(\rho,n)$-polynomial; and
\item If $E$ is weakly $(\rho,n)$-polynomial and $F(U)$ is $1$-connected whenever $\dim(U) \geq \rho$, then $F$ is weakly $(\rho, n)$-polynomial.
\end{enumerate}
\end{thm}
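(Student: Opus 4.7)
The plan is to apply the polynomial approximation functor $T_n$ to the given homotopy fibre sequence and extract the desired connectivity estimates from the resulting morphism of fibre sequences via standard five-lemma-type arguments.

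First, I would observe that $T_n\colon \E_0 \to \E_0$ preserves homotopy fibre sequences, since it is built as a sequential homotopy colimit of iterations of the finite homotopy limit $\tau_n$, and both operations commute with homotopy fibre sequences. Applied to the hypothesis, this yields a commutative diagram
\[
\xymatrix{
E(U) \ar[r] \ar[d]_{\eta_E} & F(U) \ar[r] \ar[d]_{\eta_F} & F(U \oplus V) \ar[d]^{\eta_F} \\
T_nE(U) \ar[r] & T_nF(U) \ar[r] & T_nF(U \oplus V)
}
\]
with both rows homotopy fibre sequences. Recall the standard connectivity estimate: in a morphism of fibre sequences, if any two of the three vertical maps are $m$-connected, so is the third.

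For part (1), under the weak $(\rho,n)$-polynomial hypothesis on $F$ the middle and right vertical arrows are both at least $(2(n+1)\dim(U) - b)$-connected whenever $\dim(U) \geq \rho$ (using $\dim(U \oplus V) \geq \dim(U)$). The five-lemma then yields the same connectivity for $\eta_E$, proving $E$ is weakly $(\rho,n)$-polynomial.

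For part (2) the same estimate gives the inequality
\[
\mathrm{conn}(\eta_F \text{ at } U) \geq \min\bigl(\mathrm{conn}(\eta_E \text{ at } U),\; \mathrm{conn}(\eta_F \text{ at } U \oplus V)\bigr).
\]
Iterating with $V = \C$, and using that $\mathrm{conn}(\eta_E \text{ at } U \oplus \C^j) \geq 2(n+1)\dim(U) - b$ for every $j \geq 0$, yields
\[
\mathrm{conn}(\eta_F \text{ at } U) \geq \min\bigl(2(n+1)\dim(U) - b,\; \mathrm{conn}(\eta_F \text{ at } U \oplus \C^k)\bigr)
\]
for every $k \geq 1$. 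The hard part is to show that the second term in this minimum grows unboundedly in $k$. For this, I would argue as follows: the Taylor tower layers $D_iF(\C^\infty) = \hocolim_k D_iF(\C^k)$ are weakly contractible, since $D_iF(\C^k) \simeq \Omega^\infty[(S^{i\C^k} \wedge \Psi_F^i)_{h\U(i)}]$ is $(2ik - \mathrm{const})$-connected, and these connectivities tend to infinity; consequently $\eta_F\colon F(\C^\infty) \to T_nF(\C^\infty)$ is a weak equivalence. The $1$-connectivity hypothesis then invokes the Milnor-type identification $\pi_*(\hocolim_k X_k) \cong \colim_k \pi_*(X_k)$ for the towers $\{F(U \oplus \C^k)\}$ and $\{T_nF(U \oplus \C^k)\}$, forcing $\mathrm{conn}(\eta_F \text{ at } U \oplus \C^k) \to \infty$; combined with the iterated estimate above, this completes the argument.

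The principal obstacle is this limit step in part (2): without the $1$-connectivity hypothesis, homotopy groups do not commute with the sequential homotopy colimit, and the iterated five-lemma bound cannot be closed up.
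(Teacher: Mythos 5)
Part (1) of your sketch is fine and is essentially the paper's argument: apply $T_n$, which preserves fibre sequences, and read off the connectivity of the fibre map from the other two (the loss of one in connectivity is absorbed into the constant $b$ in the definition of agreement). The problem is part (2), and it lies exactly at the step you flag as ``the hard part''. Your iteration gives $\mathrm{conn}(\eta_F \text{ at } U) \geq \min\bigl(2(n+1)\dim(U)-b,\ \mathrm{conn}(\eta_F \text{ at } U\oplus\C^k)\bigr)$ for every $k$, but your proposed closure of this bound does not work: knowing that $\hocolim_k F(U\oplus\C^k) \longrightarrow \hocolim_k T_nF(U\oplus\C^k)$ is a weak equivalence (which is true for \emph{any} $F$, by the unitary analogue of \cite[Lemma 5.14]{We95}) does \emph{not} force $\mathrm{conn}(\eta_F \text{ at } U\oplus\C^k)\to\infty$. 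The identification $\pi_*(\hocolim_k X_k)\cong\colim_k\pi_*(X_k)$ only compares the colimits; a filtered system of maps can induce an equivalence on homotopy colimits while each individual map has bounded (even zero) connectivity, so no quantitative information about $\eta_F$ at finite stages comes back from the stage at infinity this way. Two further slips: the $1$-connectivity hypothesis has nothing to do with $\pi_*$ commuting with filtered homotopy colimits (that holds regardless), and your layer-based justification that $\eta_F$ is an equivalence at $\C^\infty$ implicitly assumes the derivative spectra $\Psi_F^i$ are bounded below, which is not automatic; the correct justification is that finite homotopy limits commute with filtered homotopy colimits.

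The paper closes the argument by tracking a finer invariant than the connectivity of the vertical maps, namely the cartesianness of squares. From the map of fibre sequences one deduces (using the connectivity of the map of fibres together with the path-/simple-connectivity of $F(\C^\rho)$, $F(\C^{\rho+1})$, via \cite[Proposition 3.3.18]{MV15}) that the square comparing $F(\C^q)\to F(\C^{q+k})$ with $T_nF(\C^q)\to T_nF(\C^{q+k})$ is $(2(n+1)\rho-b)$-cartesian, uniformly in $k$, by composing squares \cite[Proposition 3.3.20]{MV15}. This cartesianness \emph{does} pass to the colimit in $k$, so the square with $F(\C^\infty)$ and $T_nF(\C^\infty)$ on the right is $(2(n+1)\rho-b)$-cartesian; since the right-hand vertical map is an equivalence, \cite[Proposition 3.3.11]{MV15} then yields that $F(\C^q)\to T_nF(\C^q)$ is $(2(n+1)\rho-b)$-connected. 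This is precisely the mechanism your sketch is missing, and it is also where the $1$-connectedness hypothesis genuinely enters. To repair your proof you would need to replace the iterated min-estimate on vertical connectivities by this cartesian-square bookkeeping (or an equivalent statement about the maps of horizontal homotopy fibres), rather than trying to extract connectivity at finite stages from the equivalence at infinity.
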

\begin{proof}
For $(1)$, simply note that $T_n$ preserves fibre sequence, hence there is a commutative diagram 
\[
\xymatrix{
E(U) \ar[r] \ar[d] & \ar[r] \ar[d] F(U) & F(U \oplus V) \ar[d] \\
T_nE(U) \ar[r] & \ar[r] T_nF(U) & T_nF(U \oplus V). \\
}
\]
It follows that since the middle and right hand vertical maps are agreements of order $n$, that the left hand vertical map is also an agreement of order $n$. 

For $(2)$ it suffices to consider a fibre sequence
\[
\xymatrix{
E(\C^\rho) \ar[r] & F(\C^\rho) \ar[r] & F(\C^{\rho+1}).
}
\]
We achieve the same map of fibre sequences, 
\[
\xymatrix{
E(\C^\rho) \ar[r] \ar[d] & \ar[r] \ar[d] F(\C^\rho) & F(\C^{\rho+1}) \ar[d] \\
T_nE(\C^\rho) \ar[r] & \ar[r] T_nF(\C^\rho) & T_nF(\C^{\rho+1}) \\
}
\]
in which the left hand vertical map is $(2(n+1)\rho -b)$-connected. Taking vertical fibres on the above diagram gives a diagram, 
\[
\xymatrix{
\mathcal{F}_{E(\C^\rho)} \ar[r] \ar[d] & \mathcal{F}_{F(\C^\rho)} \ar[r] \ar[d] & \mathcal{F}_{F(\C^{\rho+1})} \ar[d] \\
E(\C^\rho) \ar[r] \ar[d] & \ar[r] \ar[d] F(\C^\rho) & F(\C^{\rho+1}) \ar[d] \\
T_nE(\C^\rho) \ar[r] & \ar[r] T_nF(\C^\rho) & T_nF(\C^{\rho+1}). \\
}
\]
Since the lower left hand vertical map is $(2(n+1)\rho -b)$-connected, $\mathcal{F}_{E(\C^\rho)}$ is $(2(n+1)\rho -b-1)$-connected, hence the top right hand horizontal map is $(2(n+1)\rho -b)$-connected. It follows by \cite[Proposition 3.3.18]{MV15} and the fact that $F(\C^\rho)$ and $F(\C^{\rho+1})$ are path connected, that the lower right hand square is $(2(n+1)\rho -b)$-cartesian. 

Applying \cite[Proposition 3.3.20]{MV15} to the sequence of $(2(n+1)\rho -b)$-cartesian squares,
\[
\xymatrix{
 \ar[r] \ar[d] F(\C^\rho) & F(\C^{\rho+1}) \ar[d] \ar[r] & F(\C^{\rho+2}) \ar[d] \ar[r] & \dots \\
 \ar[r] T_nF(\C^\rho) & T_nF(\C^{\rho+1}) \ar[r] & F(\C^{\rho+2}) \ar[r] & \dots.\\
}
\]
yields for all $q \geq \rho$ and all $k \geq1$ a $(2(n+1)\rho -b)$-cartesian square
\[
\xymatrix{
F(\C^q) \ar[r] \ar[d] & F(\C^{q+k}) \ar[d] \\
T_nF(\C^q) \ar[r] & T_nF(\C^{q+k})\\
}
\]
Filtered homotopy colimits preserve $(2(n+1)\rho -b)$-cartesian squares, hence the square 
\[
\xymatrix{
F(\C^q) \ar[r] \ar[d] & F(\C^\infty) \ar[d] \\
T_nF(\C^q) \ar[r] & T_nF(\C^\infty) \\
}
\]
is $(2(n+1)\rho -b)$-cartesian. The unitary version of \cite[Lemma 5.14]{We95}, which holds since finite homotopy limits commute with filtered homotopy colimits, gives that $F(\C^\infty) \longrightarrow T_nF(\C^\infty)$ is a weak equivalence, in particular, it is $(2(n+1)\rho -b)$-connected. It follows by \cite[Proposition 3.3.11]{MV15} that the map 
\[
F(\C^q) \longrightarrow T_nF(\C^q)
\]
is $(2(n+1)\rho -b)$-connected, hence and agreement of order $n$.
\end{proof}

This recovers a version \cite[Theorem 4.1]{BE16} as a corollary. 

\begin{cor}\label{BE4.1}
Let $F \in \E_0$. If $F^{(1)}$ is weakly $(\rho,n)$-polynomial and $F(U)$ is $1$-connected whenever $\dim(U) \geq \rho$, then $F$ is weakly $(\rho,n)$-polynomial. 
\end{cor}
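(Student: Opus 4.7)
The plan is to reduce Corollary \ref{BE4.1} directly to Theorem \ref{quasi thm}(2) by producing the right homotopy fibre sequence. The natural source of such a sequence is Proposition \ref{derivatives stable} (Proposition B), specialised to the case $n = 0$. Since $\Omega^{2 \cdot 0}$ is the identity, and since $\res_0^1 \ind_0^1 F$ is exactly what is denoted $F^{(1)}$ when it is treated as a functor on $\J_0$ (as in Example \ref{example: derivative of BU}), this specialisation yields, for every $U \in \J_0$, a homotopy fibre sequence
\[
F^{(1)}(U) \longrightarrow F(U) \longrightarrow F(U \oplus \C).
\]

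With this fibre sequence in hand, the corollary follows by a direct application of Theorem \ref{quasi thm}(2), taking $V = \C$, $E = F^{(1)}$, and $F$ as given. The two hypotheses of that theorem, namely that the fibre term is weakly $(\rho, n)$-polynomial and that the total space is $1$-connected for $\dim(U) \geq \rho$, are precisely the two hypotheses imposed in the statement of Corollary \ref{BE4.1}. The conclusion delivered by Theorem \ref{quasi thm}(2) is exactly that $F$ is weakly $(\rho, n)$-polynomial, which is what we want.

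The only point requiring any care is the notational identification of the fibre in Proposition \ref{derivatives stable} with the symbol $F^{(1)}$ appearing in the hypothesis. Definition \ref{def: derivative} defines $F^{(n)} = \ind_0^n F$, which lives in $\E_n$; the conventions used throughout the paper (as in Example \ref{example: derivative of BU}) freely conflate $F^{(1)}$ with its restriction along $i_0^1 : \J_0 \to \J_1$, so that it makes sense to speak of $F^{(1)}$ being weakly polynomial in the sense of Definition \ref{def: weak poly}. Once this bookkeeping is acknowledged, the proof is a one-line citation of Theorem \ref{quasi thm}(2), and there is no substantive obstacle.
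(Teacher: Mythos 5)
Your proposal is correct and follows essentially the same route as the paper: the paper's proof is exactly the one-line application of Theorem \ref{quasi thm} to the homotopy fibre sequence $F^{(1)}(U) \longrightarrow F(U) \longrightarrow F(U \oplus \C)$, which is the $n=0$ case of Proposition \ref{derivatives stable} as you note. Your extra remarks on where the fibre sequence comes from and on the implicit identification of $F^{(1)}$ with its restriction to $\J_0$ are accurate bookkeeping that the paper leaves tacit.
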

\begin{proof}
Apply Theorem \ref{quasi thm} to the fibre sequence 
\[
F^{(1)}(U) \longrightarrow F(U) \longrightarrow F(U \oplus \C). \qedhere
\]
\end{proof}

\begin{ex}\label{example: BU}
The Taylor tower associated to $\BU(-): V \longmapsto \BU(V)$ converges to $\BU(V)$ at $V$ with $\dim(V) \geq 1$. 
\end{ex}
\begin{proof}
By Example \ref{example: sphere functor analytic}, the first derivative of $\BU(-)$ is the sphere functor $\mathbb{S}$ which is weakly polynomial for $\dim(V) \geq 1$ by Example \ref{example: sphere functor analytic}. An application of Corollary \ref{BE4.1} yields the result upon noting that $\BU(V)$ is $1$-connected for $V$ in the given range. 
\end{proof}

We end this section with one of the main results, which shows that the Taylor tower of any representable functor converges. This theorem yields convergence results for many interesting functors. 

\begin{thm}[Theorem H]\label{representable functors are analytic}
Representable functors are weakly polynomial, that is, for all $U \in \J$, the functor $\J(U,-)$ is weakly polynomial. 
\end{thm}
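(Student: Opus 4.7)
I would argue by induction on $n := \dim U_0$. The base case $n = 0$ is immediate: the functor $\J(0,-)$ is the constant functor with value a point, which is $k$-polynomial for every $k \geq 0$, so the canonical map $\J(0,-) \to T_k \J(0,-)$ is a levelwise weak equivalence and $\J(0,-)$ is trivially weakly polynomial.

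For the inductive step, assume $\J(U',-)$ is weakly polynomial for every $U' \in \J$ with $\dim U' < n$, and let $\dim U_0 = n$; write $U_0 = U' \oplus \C$ with $\dim U' = n-1$. Set $F := \J(U_0,-)$. Since $\J(U_0,V) = \U(V)/\U(V \ominus U_0)$ is the complex Stiefel manifold, it is $2(\dim V - \dim U_0)$-connected, hence in particular $1$-connected whenever $\dim V \geq \dim U_0 + 1$. By Corollary~\ref{BE4.1}, to conclude that $F$ is weakly polynomial it suffices to prove that the first derivative $F^{(1)}$ is weakly polynomial. By Proposition~\ref{derivatives stable} applied with $n=0$, the first derivative is identified as the homotopy fibre sequence in $\E_0$
\[
F^{(1)}(V) \longrightarrow \J(U_0,V) \longrightarrow \J(U_0, V \oplus \C).
\]

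To reduce the analysis of $F^{(1)}$ to the inductive hypothesis, I would exploit the Stiefel forgetful fibration $\J(U_0,V) \to \J(U',V)$ whose fibre over $f \in \J(U',V)$ is the unit sphere $S(V \ominus f(U')) \simeq S^{2(\dim V - n) + 1}$. Comparing this fibration with its counterpart over $V \oplus \C$ yields a commutative square of Stiefel manifold bundles, and taking vertical homotopy fibres produces a homotopy fibre sequence in $\E_0$ of the form
\[
E(V) \longrightarrow F^{(1)}(V) \longrightarrow \J(U',-)^{(1)}(V),
\]
where $E$ is (up to a connectivity shift) the first derivative of a shifted unit-sphere functor. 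The target $\J(U',-)^{(1)}$ is weakly polynomial by one application of Corollary~\ref{BE4.1} to the inductive hypothesis (using $1$-connectedness of $\J(U',V)$ for $\dim V$ large). The functor $E$ is weakly polynomial by an argument analogous to Example~\ref{example: sphere functor analytic}, extended to the shifted sphere functors appearing here as in Example~\ref{example: shifted spheres analytic}. Invoking Theorem~\ref{quasi thm}(1) then propagates weak polynomiality along the above fibre sequence to $F^{(1)}$, and Corollary~\ref{BE4.1} completes the inductive step.

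\textbf{Main obstacle.} The principal technical hurdle will be producing a homotopy fibre sequence of the precise shape $E(U) \to G(U) \to G(U \oplus W)$ required by Theorem~\ref{quasi thm} (with the last two terms the same functor at translated inputs). The natural Stiefel fibration $\J(U_0, -) \to \J(U', -)$ is not of this form, so one must iterate the comparison with $V \oplus \C$ and carefully re-identify the iterated homotopy fibres, simultaneously keeping track of connectivity so that the extended version of the shifted sphere example applies. A secondary technical point is that the shifted spheres appearing as fibres have negative shift $-2\dim U' - 1$, so Example~\ref{example: shifted spheres analytic} as stated does not directly apply and must be adapted to cover this case (which is permissible since $\dim V \geq \rho$ ensures the relevant sphere dimensions remain non-negative).
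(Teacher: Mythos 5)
Your overall strategy (induction on $\dim U_0$ via fibrations of Stiefel manifolds) is in the right spirit, but the pivotal step of your plan has a genuine gap. To conclude that $F^{(1)}$ is weakly polynomial from the fibre sequence $E \to F^{(1)} \to \J(U',-)^{(1)}$ you invoke Theorem~\ref{quasi thm}(1); that theorem, however, applies only to fibre sequences of the special shape $E(U) \to G(U) \to G(U \oplus V)$, and even then it transfers weak polynomiality \emph{to the fibre} $E$, not to the middle term. Your sequence has neither feature: its base is $\J(U',-)^{(1)}$ rather than a translate of $F^{(1)}$, and the functor you need to control sits in the middle. You flag the shape problem yourself as the main obstacle, but the plan contains no mechanism for resolving it, so as written the induction does not close. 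A second, smaller slip points the same way: you cite Corollary~\ref{BE4.1} to deduce that $\J(U',-)^{(1)}$ is weakly polynomial from the inductive hypothesis, but that corollary runs in the opposite direction (from the derivative to the functor); the correct tool there is Theorem~\ref{quasi thm}(1) applied to the defining fibre sequence $\J(U',-)^{(1)}(V) \to \J(U',V) \to \J(U',V\oplus\C)$, which does have the required shape.

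What is actually needed, and what the paper's own proof uses, is the elementary propagation principle that, since $T_k$ preserves fibre sequences, order-$k$ agreement for the fibre and for the base forces order-$k$ agreement for the total space, by the five lemma on the long exact sequences of homotopy groups; no massaging into the shape of Theorem~\ref{quasi thm} is required. With this in hand the paper's argument is much shorter and avoids derivatives altogether: it reduces to the functors $U \longmapsto \J(\C^n,\C^n\oplus U)$ and uses the coset fibration $\U(n-1)\to\U(n)\to S^{2n-1}$, producing a fibre sequence with fibre $\J(\C^{n-1},\C^{n-1}\oplus U)$ (weakly polynomial by induction) and base the sphere functor $U \longmapsto S^{2(\dim(U)+n)-1}$ (weakly polynomial as in Example~\ref{example: shifted spheres analytic}), whence the total space is weakly polynomial and the induction closes. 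If you wish to salvage your route through $F^{(1)}$ and Corollary~\ref{BE4.1}, you would in any case have to prove this propagation lemma first, at which point the detour through derivatives, the negatively shifted unit-sphere fibres (which, as you note, fall outside Example~\ref{example: shifted spheres analytic} as stated), and the $1$-connectivity of Stiefel manifolds buys nothing over the direct argument.
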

\begin{proof}
As linear isometries are in particular injections, it suffices to prove that the functor
\[
U \longmapsto \J(\C^n , \C^n \oplus U),
\]
is weakly polynomial. The coset quotient projection $U(n-1) \longrightarrow U(n) \longrightarrow S^{2n-1}$ induces a fibre sequence 
\[
\J(\C^{n-1}, \C^{n-1} \oplus U) \longrightarrow \J(\C^n, \C^n \oplus U) \longrightarrow S^{2(\dim(U) +n) -1}.
\]
By induction on $n$, it suffices to prove that $S^{2(\dim(U)+n) -1}$ is weakly $1$-polynomial for all $n\geq 0$, since $T_n$ preserves fibre sequences. This follows for $n\geq 0$ by Example \ref{example: shifted spheres analytic}.
\end{proof}

\begin{ex}
The $n$-sphere functor $n\mathbb{S}: U \longmapsto S^{nU} \cong \J(0,nU)$ is weakly $1$-polynomial.  
\end{ex}

\begin{ex}\label{example: unitary group again}
The functor $\U(-): V \longmapsto \U(V)$ is weakly $1$--polynomial via the identification $\U(V) \cong \J(V,V)$.
\end{ex}

\begin{lem}\label{lemma: properties of analytic functors}
Let $F$ be a weakly $\rho$-polynomial functor. Then, $\Omega F$ is weakly $\rho$-polynomial.
\end{lem}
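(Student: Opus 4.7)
The plan is to exploit the fact that $T_n$, being built out of finite homotopy limits (the $\tau_n$-construction) and filtered homotopy colimits, commutes with the loop functor $\Omega$. Concretely, since $\Omega$ is a finite homotopy limit (over the diagram $\ast \to S^1 \leftarrow \ast$), and finite homotopy limits commute with both the finite homotopy limits defining $\tau_n$ and with the filtered homotopy colimit defining $T_n$, there is a natural levelwise weak equivalence
\[
T_n(\Omega F)(U) \;\simeq\; \Omega\, T_n F(U)
\]
for every $U \in \J_0$, under which the canonical map $\eta \colon \Omega F \longrightarrow T_n(\Omega F)$ is identified with $\Omega \eta_F$.

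Now fix $n \geq 0$. By hypothesis $F$ is weakly $(\rho,n)$-polynomial, so there exists $b \in \Z$ such that
\[
\eta_F \colon F(U) \longrightarrow T_n F(U)
\]
is $(2(n+1)\dim(U) - b)$-connected for all $U$ with $\dim(U) \geq \rho$. Applying $\Omega$ to a $k$-connected map produces a $(k-1)$-connected map, so $\Omega\eta_F$ is $(2(n+1)\dim(U) - (b+1))$-connected on the same range. Composing with the equivalence $\Omega T_nF(U) \simeq T_n(\Omega F)(U)$ shows that $\eta \colon \Omega F(U) \to T_n(\Omega F)(U)$ is an order $n$ agreement with constant $b' = b+1$. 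Since this holds for every $n \geq 0$, $\Omega F$ is $\rho$-weakly polynomial.

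The only non-routine step is justifying $T_n \Omega \simeq \Omega T_n$, but this is standard: $\tau_n$ is a finite holim (indexed by the internal category of nonzero subspaces of $\C^{n+1}$, which is compact), and both finite holims and $\Omega$ commute with each other, so iterating gives $\tau_n^k \Omega \simeq \Omega \tau_n^k$ coherently, and then passing to the filtered hocolim defining $T_n$ preserves this equivalence. This is the main (and essentially only) point requiring care in the argument.
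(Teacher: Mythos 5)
Your proof is correct and is essentially the paper's argument: the paper likewise identifies $T_n\Omega F \simeq \Omega T_nF$ (via $\Omega$ being a finite homotopy limit, commuting with the finite homotopy limits defining $\tau_n$ and with the filtered homotopy colimit defining $T_n$) and then observes that looping drops the connectivity of $\eta_F$ by one, which is still an order $n$ agreement. One tiny slip: $\Omega X$ is the homotopy limit of the cospan $\ast \to X \leftarrow \ast$ (equivalently the cotensor by $S^1$), not of $\ast \to S^1 \leftarrow \ast$, but this does not affect the argument.
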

\begin{proof}
We calculate the connectivity of $\Omega F(U) \longrightarrow T_n \Omega F(U)$ for $U$ with $\dim(U) \geq \rho$, given that 
\[
F(U) \longrightarrow T_nF(U),
\]
is $(2(n+1)\dim(U) -b)$-connected. As homotopy limits commute, it is enough to calculate the connectivity of the map $\Omega F(U) \longrightarrow \Omega T_nF(U)$. This map is $(2(n+1)\dim(U) -b-1)$-connected. 
\end{proof}

\begin{cor}\label{cor: convergence of loops}
If the Taylor tower associated to $F\in \E_0$ converges to $F$, then the Taylor tower associated to $\Omega F$ converges to $\Omega F$.
\end{cor}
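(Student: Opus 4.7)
The plan is to reduce convergence for $\Omega F$ to convergence for $F$ by showing that $\Omega$ commutes, up to weak equivalence, with both $T_n$ and with the homotopy limit of the Taylor tower. The outcome is essentially a diagram chase after two commutativity statements are established.

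First I would observe that $\Omega$ commutes with the construction $T_n$ in the sense that there is a natural levelwise weak equivalence $T_n(\Omega F)(U) \simeq \Omega T_n F(U)$. This follows from the proof of Lemma \ref{lemma: properties of analytic functors}, where exactly this commutativity was used: the functor $\tau_n$ is defined as a finite homotopy limit (over the poset of nonzero subspaces of $\C^{n+1}$), the loop space $\Omega(-) = \Map_*(S^1, -)$ is itself a finite homotopy limit, and finite homotopy limits commute. Iterating $\tau_n$ and then passing to the filtered homotopy colimit defining $T_n$ preserves this commutativity, since finite homotopy limits commute with filtered homotopy colimits.

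Next I would observe that $\Omega$ commutes with the sequential (cofiltered) homotopy limit over $n$ of the Taylor tower, because $\Omega$ is a right adjoint and in particular preserves homotopy limits. Combining these two facts with the hypothesised equivalence $F(U) \xrightarrow{\simeq} \holim_n T_n F(U)$ and the naturality of $\eta : \Omega F \longrightarrow T_n \Omega F$, we obtain the chain
\[
\Omega F(U) \xrightarrow{\simeq} \Omega \bigl( \holim_n T_n F(U) \bigr) \simeq \holim_n \Omega T_n F(U) \simeq \holim_n T_n (\Omega F)(U),
\]
which is the required convergence statement for $\Omega F$.

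The whole argument is essentially formal once the two commutation isomorphisms are in place; there is no serious obstacle. The only point requiring a little care is the first commutation $T_n \Omega \simeq \Omega T_n$, since $T_n$ is built from both homotopy limits and homotopy colimits, but this is precisely the kind of interchange already exploited in Lemma \ref{Tn is n-polynomial} and in the proof of Lemma \ref{lemma: properties of analytic functors}, so no new input is needed.
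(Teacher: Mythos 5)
Your argument is correct, and it is essentially the argument the paper intends: the corollary is left as immediate, and the key interchange $T_n\Omega F \simeq \Omega T_nF$ (finite homotopy limits commuting with the filtered homotopy colimit defining $T_n$) is exactly the mechanism already invoked in the proof of Lemma \ref{lemma: properties of analytic functors}, after which $\Omega$ preserving the homotopy limit of the tower finishes the proof. The only point to keep in mind is the routine naturality check that your equivalences are compatible with the maps $\eta$ and $r_n$, which you have noted.
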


We now give an alternative proof that the Taylor tower associated to $\U(-): V \longmapsto \U(V)$ converges to $\U(V)$

\begin{ex}[Example B]\label{example: Unitary group converges}
The Taylor tower associated to $\mathsf{U}(-): V \longmapsto \mathsf{U}(V)$ converges to $\U(-)$ for $\dim(V)\geq 1$.
\end{ex}
\begin{proof}
By Example \ref{example: BU}, the Taylor tower associated to $\mathsf{BU}(-): V \longmapsto \mathsf{BU}(V)$ converges to $\BU(-)$ for $\dim(V) \geq 1$. It follows by Corollary \ref{cor: convergence of loops} that the Taylor tower associated to $\Omega \mathsf{BU}(-) : V \longmapsto \Omega \mathsf{BU}(V)$ converges to $\Omega\BU(-)$ for $\dim(V)\geq 1$. This last functor is levelwise weakly equivalent (in fact homotopy equivalent) to $\mathsf{U}(-): V \longmapsto \mathsf{U}(V)$. 
\end{proof}

\bibliography{references}
\bibliographystyle{alpha}
\end{document}